\DeclareMathSymbol{\leq}{\mathrel}{symbols}{20}
   \let\le=\leq
\DeclareMathSymbol{\geq}{\mathrel}{symbols}{21}
   \let\ge=\geq
\newcommand{\R}{\mathbb{R}}
\newcommand{\N}{\mathbb{N}}
\newcommand{\F}{\mathcal{F}}
\renewcommand{\P}{\mathbb{P}}
\newcommand{\E}{\mathbb{E}}
\newcommand{\e}{\varepsilon}
\renewcommand{\1}{\mathbbm{1}}
\newtheorem{Theorem}{Theorem}[section]
\newtheorem{Proposition}[Theorem]{Proposition}
\newtheorem{Corollary}[Theorem]{Corollary}
\newtheorem{Lemma}[Theorem]{Lemma}
\newtheorem{Remark}[Theorem]{Remark}
\newtheorem{Definition}[Theorem]{Definition}
\numberwithin{equation}{section}
\newcommand{\customlabel}[1]{%
     \stepcounter{ref}%
   \protected@write
\@auxout{}{\string\newlabel{#1}{{\thesatz.\arabic{ref}}{\thepage}{\thesatz.\arabic{ref}}{#1}{}}}%
   \hypertarget{#1}{\thesatz.\arabic{ref}}%
}
\newenvironment{sciabstract}{\begin{quote}}{\end{quote}}
\newcounter{lastnote}
\title{Stochastic equation and exponential ergodicity in Wasserstein distances for affine processes}
\newcommand{\pdftitle}{Stochastic equation and exponential ergodicity in Wasserstein distances for affine processes}
\newcommand{\pdfauthor}{Martin Friesen}
\author{
Martin Friesen\footnote{Fakult\"at f\"ur Mathematik und Naturwissenschaften, Bergische Universit\"at Wuppertal, Gaußstraße 20, 42119 Wuppertal, Germany, friesen@math.uni-wuppertal.de}\\
Peng Jin\footnote{Department of Mathematics, Shantou University, Shantou, Guangdong 515063, China, pjin@stu.edu.cn}\\
Barbara R\"udiger\footnote{Fakult\"at f\"ur Mathematik und Naturwissenschaften, Bergische Universit\"at Wuppertal, Gaußstraße 20, 42119 Wuppertal, Germany, ruediger@uni-wuppertal.de}
}
\def\HyPsd@CatcodeWarning#1{}
\begin{document}

\maketitle

\begin{sciabstract}\textbf{Abstract:}
This work is devoted to the study of conservative affine processes on the canonical state space $D = \R_+^m \times \R^n$, where $m+n > 0$.
We show that each affine process can be obtained as the pathwise unique strong solution to a stochastic equation
driven by Brownian motions and Poisson random measures.
Then we study the long-time behavior of affine processes, i.e., we show that under first moment condition on the state-dependent and
$\log$-moment conditions on the state-independent jump measures, respectively, each subcritical affine process is exponentially ergodic in a suitably chosen Wasserstein distance.
Moments of affine processes are studied as well.
\end{sciabstract}

\noindent \textbf{AMS Subject Classification:} 37A25; 60H10; 60J25\\
\textbf{Keywords:} affine process; ergodicity; Wasserstein distance; coupling; stochastic differential equation

\section{Introduction and statement of the result}

\subsection{General introduction}
An affine process is a time-homogeneous Markov processes $(X_t)_{t \geq 0}$ whose characteristic function satisfies
\[
 \E_x \left( e^{i \langle u, X_t \rangle} \right)
  = \exp\left ( \phi(t,iu)  +  \langle x, \psi(t,iu) \rangle \right),
\]
where $t \geq 0$ is the time and $X_0 = x$ the starting point of the process.
The general theory of affine processes, including a full characterization on the canonical state space $D = \R_+^m \times \R^n$
where $m,n \in \N_0$ and $m + n > 0$, was discussed in \cite{DFS03}.
In particular, it is shown that the functions $\phi$ and $\psi$ should satisfy certain generalized Riccati equations.
Common applications of affine processes in mathematical finance are
interest rate models (e.g., the Cox-Ingersoll-Ross, Vašiček or general affine term structure short rate models),
option pricing (e.g., the Heston model) and credit risk models,
see also \cite{A15} and the references therein.
After \cite{DFS03}, the mathematical theory of affine processes was developed in various directions.
Regularity of affine processes was studied in \cite{KST11} and \cite{KST13}.
Based on a H\"ormander-type condition, existence and smoothness
of transition densities were obtained in \cite{FMS13}.
Exponential moments for affine processes were studied in \cite{JKX12} and \cite{KM15}.
The theory of affine diffusions, i.e., processes without jumps,
was developed in \cite{FM09},
while its application to large deviations for affine diffusions
was studied in \cite{KK14}.
The possibility to obtain affine processes as multi-parameter time changes of L\'evy processes was recently discussed in \cite{CPU17}.
It is worthwhile to mention that the above list is, by far, not complete.
For further references and additional details on the general theory of affine processes we refer to the book \cite{A15}.

Below we describe two important sub-classes of affine processes.
\textit{Continuous-state branching processes with immigration}
(shorted as CBI processes) are affine processes
with state space $D = \R_+^m$.
Such processes have been first
introduced in 1958 by Ji\v{r}ina \cite{J58} and then
studied in \cite{W69,KW71,SW73},
where it was also shown that these processes arise as scaling limits of Galton-Watson processes.
Various properties of one-dimensional CBI processes were studied in
\cite{G74, FFS85, CPU13, KM12, FU14, DFM14} and \cite{CLP18}.
For results applicable in arbitrary dimension we refer to \cite{BLP15}, \cite{BLP16} and \cite{FJR18}. Let us mention that CBI processes are also measure-valued Markov processes as studied in \cite{L11}.
Another important class of affine processes corresponds to the
state space $D = \R^n$ and is consisted of processes of Ornstein-Uhlenbeck (OU) type.
These processes include also L\'evy processes as a particular case.

\subsection{Affine processes}

Let us describe affine processes in more detail. For $m,n\in\N_{0}$
let $d=n+m$, and suppose that $d>0$. In this work we study affine
processes on the canonical state space $D=\R_{+}^{m}\times\R^{n}$.
Let
\[
I=\{1,\dots,m\},\ \ J=\{m+1,\dots,d\}.
\]
If $x\in D$, then let $x_{I}=(x_{i})_{i\in I}$ and $x_{J}=(x_{j})_{j\in J}$.
Denote by $\R^{d\times d}$ the space of $d\times d$-matrices. For
$A\in\R^{d\times d}$ we write
\[
A=\left(\begin{array}{rrrr}
A_{II} & A_{IJ}\\
A_{JI} & A_{JJ}
\end{array}\right),
\]
where $A_{II}=(a_{ij})_{i,j\in I}$, $A_{IJ}=(a_{ij})_{i\in I,\ j\in J}$,
$A_{JI}=(a_{ij})_{i\in J,\ j\in I}$, and $A_{JJ}=(a_{ij})_{i,j\in J}$.
Denote by $S_{d}^{+}$ the space of symmetric and
positive semidefinite $d\times d$-matrices. Finally, let $\delta_{kl}$,
$k,l\in\{1,\dots,d\}$, stand for the Kronecker-Delta.
\begin{Definition}\label{defi: adm. parameter} We call a tuple $(a,\alpha,b,\beta,\nu,\mu)$
admissible parameters, if they satisfy the following conditions:
\begin{enumerate}
\item[(i)] $a\in S_{d}^{+}$ with $a_{II}=0$, $a_{IJ}=0$ and
$a_{JI}=0$.
\item[(ii)] $\alpha=(\alpha_{1},\dots,\alpha_{m})$ with $\alpha_{i}=(\alpha_{i,kl})_{1\leq k,l\leq d}\in S_{d}^{+}$
and $\alpha_{i,kl}=0$ if $k\in I\backslash\{i\}$
or $l\in I\backslash\{i\}$.
\item[(iii)] $b\in D$.
\item[(iv)] $\beta\in\R^{d\times d}$ is such that $\beta_{ki}-\int_{D} \xi_{k} \mu_{i}(d\xi)\geq0$
for all $i\in I$ and $k\in I\backslash\{i\}$, and $\beta_{IJ}=0$.
\item[(v)] $\nu$ is a Borel measure on $D$ such that $\nu(\{0\})=0$ and
\[
\int\limits _{D}\left(1\wedge|\xi|^{2}+\sum\limits _{i\in I}(1\wedge\xi_{i})\right)\nu(d\xi)<\infty.
\]
\item[(vi)] $\mu=(\mu_{1},\dots,\mu_{m})$ where $\mu_{1},\dots,\mu_{m}$ are
Borel measures on $D$ such that
\[
\mu_{i}(\{0\})=0,\qquad\int\limits _{D}\left(|\xi|\wedge|\xi|^{2}+\sum\limits _{k\in I\backslash\{i\}}\xi_{k}\right)\mu_{i}(d\xi)<\infty,\ \ i\in I.
\]
\end{enumerate}
\end{Definition} In contrast to \citep{DFS03}, we do not consider
killing for affine processes and, moreover, we suppose that $\mu_{1},\dots,\mu_{m}$
integrate $\1_{\{|\xi|>1\}}|\xi|$, i.e., the first
moment for big jumps is finite. It is well-known that without killing
and under first moment condition for the big jumps of $\mu_{1},\dots,\mu_{m}$,
the corresponding affine process (introduced below) is conservative
(see \citep[Lemma 9.2]{DFS03}). In this paper we
work with Definition \ref{defi: adm. parameter} and thus restrict
our study to conservative affine processes. In order to simplify
the notation, we have also set $\nu(\{0\})=0$ and $\mu_{i}(\{0\})=0$,
for $i\in I$. Hence all integrals with respect to the measures $\mu_{1},\dots,\mu_{m},\nu$
can be taken over $D$ instead of $D\backslash\{0\}$.

Denote by $B_{b}(D)$ the Banach space of bounded
measurable functions over $D$. This space is equipped with the supremum
norm $\|f\|_{\infty}=\sup_{x\in D}|f(x)|$. Define
\[
\mathcal{U}=\mathbb{C}_{\leq0}^{m}\times i\R^{n}=\{u=(u_{1},u_{2})\in\mathbb{C}^{m}\times\mathbb{C}^{n}\ |\ \mathrm{Re}(u_{1})\leq0,\ \ \mathrm{Re}(u_{2})=0\}.
\]
Note that $D\ni x\longmapsto e^{\langle u,x\rangle}$ is bounded for
any $u\in\mathcal{U}$. Here $\langle\cdot,\cdot\rangle$ denotes
the Euclidean scalar product on $\R^{d}$. By abuse
of notation, we later also use $\langle\cdot,\cdot\rangle$ for the
scalar product on $\R^{m}$ or $\R^{n}.$ The following is due to
\citep{DFS03}. \begin{Theorem}\label{EXISTENCE:AFFINE} Let $(a,\alpha,b,\beta,\nu,\mu)$
be admissible parameters. Then there exists a unique conservative
Feller semigroup $(P_{t})_{t\ge0}$ on $B_{b}(D)$
with generator $(L,D(L))$ such that $C_{c}^{2}(D)\subset D(L)$ and,
for $f\in C_{c}^{2}(D)$ and $x\in D$,
\begin{align*}
(Lf)(x) & =\langle b+\beta x,\nabla f(x)\rangle+\sum\limits _{k,l=1}^{d}\left(a_{kl}+\sum\limits _{i=1}^{m}\alpha_{i,kl}x_{i}\right)\frac{\partial^{2}f(x)}{\partial x_{k}\partial x_{l}}\\
 & \ \ \ +\int\limits _{D}\left(f(x+\xi)-f(x)-\langle\xi_{J},\nabla_{J}f(x)\rangle\1_{\{|\xi|\leq1\}}\right)\nu(d\xi)\\
 & \ \ \ +\sum\limits _{i=1}^{m}x_{i}\int\limits _{D}\left(f(x+\xi)-f(x)-\langle\xi,\nabla f(x)\rangle\right)\mu_{i}(d\xi),
\end{align*}
where $\nabla_{J}=(\frac{\partial}{\partial x_{j}})_{j\in J}$. Moreover,
$C_{c}^{\infty}(D)$ is a core for the generator. Let $P_{t}(x,dx')$
be the transition probabilities. Then
\begin{align}
\int\limits _{D}e^{\langle u,x'\rangle}P_{t}(x,dx')=\exp\left(\phi(t,u)+\langle x,\psi(t,u)\rangle\right),\qquad u\in\mathcal{U},\label{LAPLACE}
\end{align}
where $\phi:\R_{+}\times\mathcal{U}\longrightarrow\mathbb{C}$ and
$\psi:\R_{+}\times\mathcal{U}\longrightarrow\mathbb{C}^{d}$ are uniquely
determined by the generalized Riccati differential equations:
for $u=(u_{1},u_{2})\in\mathbb{C}_{\leq0}^{m}\times i\R^{n}$,
\begin{align}
\partial_{t}\phi(t,u) & =F(\psi(t,u)),\ \ \phi(0,u)=0,\label{RICATTI}\\
\partial_{t}\psi_{I}(t,u) & =R(\psi_{I}(t,u), e^{t \beta_{JJ}^{\top} }u_{2} ),\ \ \psi_{I}(0,u)=u_{1},\nonumber \\
\psi_{J}(t,u) & = e^{t \beta_{JJ}^{\top}}u_{2},\nonumber
\end{align}
and $F$, $R$ are of Lévy-Khintchine form
\begin{align*}
F(u) & =\langle u,au\rangle+\langle b,u\rangle+\int\limits _{D}\left(e^{\langle u,\xi\rangle}-1-\1_{\{|\xi|\leq1\}}\langle\xi_{J},u_{J}\rangle\right)\nu(d\xi),\\
R_{i}(u) & =\langle u,\alpha_{i}u\rangle+\sum\limits _{k=1}^{d}\beta_{ki}u_{k}+\int\limits _{D}\left(e^{\langle u,\xi\rangle}-1-\langle u,\xi\rangle\right)\mu_{i}(d\xi),\qquad i\in I.
\end{align*}
\end{Theorem}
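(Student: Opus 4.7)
The plan follows the classical construction from \cite{DFS03}. First, I would solve the generalized Riccati system \eqref{RICATTI}. The vector field $(R_1,\dots,R_m)$ is locally Lipschitz on $\mathcal{U}$, and the key point is that $\mathcal{U}$ is forward invariant: the $J$-component decouples and is given explicitly by $e^{t\beta_{JJ}^\top}u_2 \in i\R^n$, while for the $I$-component a boundary analysis shows that whenever the $i$-th coordinate of $\psi_I$ has vanishing real part, $\mathrm{Re}(R_i) \le 0$. This uses the sign condition $\beta_{ki} - \int \xi_k \mu_i(d\xi) \ge 0$ for $k \in I\setminus\{i\}$ in Definition \ref{defi: adm. parameter}(iv) together with the fact that $R_i$ is a Lévy--Khintchine exponent in its remaining variables. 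Global existence and uniqueness then follow, $\phi$ is recovered by integrating $F \circ \psi$, and uniqueness yields the semiflow identities
\begin{align*}
\phi(t+s,u) &= \phi(t,u) + \phi(s, \psi(t,u)), \\
\psi(t+s,u) &= \psi(s, \psi(t,u)).
\end{align*}

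Next, I would use \eqref{LAPLACE} to define candidate kernels $P_t(x,\cdot)$. To show that the right-hand side is actually the Laplace--Fourier transform of a probability measure, I would approximate by truncating the jump kernels, setting $\nu^N = \1_{\{|\xi|>1/N\}} \nu$ and $\mu_i^N = \1_{\{|\xi|>1/N\}}\mu_i$, which are finite measures still satisfying admissibility after a compensating correction to $\beta$. For these truncated kernels the corresponding affine process can be built pathwise as a superposition of a diffusion with state-dependent covariance, compound Poisson jumps and a constant-rate immigration, and \eqref{LAPLACE} holds for the approximation by an Itô's formula computation. Continuous dependence of the Riccati flow on the parameters together with Lévy's continuity theorem then produces the transition kernel in the limit. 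Conservativeness $\phi(t,0)=0=\psi(t,0)$ relies on the first-moment condition for the large jumps of $\mu_i$ in Definition \ref{defi: adm. parameter}(vi); without it, mass could escape in the limit (cf.\ \cite[Lemma 9.2]{DFS03}).

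Given the kernels, the semigroup property is immediate from the flow identities and the uniqueness of Laplace transforms, and the Feller property follows from joint continuity of $(t,x,u) \mapsto \phi(t,u) + \langle x, \psi(t,u)\rangle$ combined with a tightness argument for $P_t(x,\cdot)$ as $|x|\to\infty$. To identify the generator on $C_c^2(D)$, I would differentiate \eqref{LAPLACE} at $t=0$: for $f_u(x) = e^{\langle u,x\rangle}$ with $u \in \mathcal{U}$ one obtains $(Lf_u)(x) = (F(u) + \langle x, R(u)\rangle)\, f_u(x)$, and unfolding the Lévy--Khintchine representations of $F$ and $R_i$ reproduces the integro-differential operator in the statement; extension to general $f\in C_c^2(D)$ proceeds by Fourier inversion, the jump integrals being absolutely convergent by parts (v) and (vi) of Definition \ref{defi: adm. parameter}. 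The core property of $C_c^\infty(D)$ then follows from a standard resolvent approximation, using that $C_c^\infty(D)$ is dense in $C_0(D)$ and that the resolvent $R_\lambda$ maps a dense subset of $C_0(D)$ into $D(L)$.

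The main obstacle is the second step: producing a genuine probability transition kernel rather than merely a positive-definite function whose total mass might be less than one. The cutoff-approximation route is natural but its success hinges on verifying that admissibility is preserved under truncation, on identifying the Riccati solutions of the truncated problem with explicit characteristic functions of a concrete stochastic process, and on ruling out loss of mass in the limit. The last point is exactly where the strengthened big-jump moment assumption in Definition \ref{defi: adm. parameter}(vi) plays a decisive role; everything else is essentially bookkeeping once the Riccati flow has been understood.
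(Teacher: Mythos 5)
Note that the paper does not prove this theorem at all: it is quoted verbatim from Duffie--Filipovi\'c--Schachermayer \cite{DFS03} (with the strengthened admissibility of Definition \ref{defi: adm. parameter} guaranteeing conservativeness via \cite[Lemma 9.2]{DFS03}), so there is no internal proof to compare against. Your outline is essentially a faithful reconstruction of the strategy of the cited source --- global solvability of the Riccati system with invariance of $\mathcal{U}$ (made easier here since the first-moment condition on the big jumps of $\mu_i$ renders $R$ locally Lipschitz), construction of the kernels by truncation and L\'evy continuity, conservativeness from the big-jump moment condition, and identification of the generator and the core --- and at the level of a sketch it is consistent with that argument.
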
 
Consequently, there exists a unique Feller process
$X$ with generator $L$. This process is called affine process with
admissible parameters $(a,\alpha,b,\beta,\nu,\mu)$.

\begin{Remark}\label{COROLLARY:00} 
Let $(a,\alpha,b,\beta,\nu,\mu)$ be admissible parameters. According to \citep[Lemma 10.1 and Lemma 10.2]{DFS03},
the martingale problem with generator $L$ and domain $C_{c}^{\infty}(D)$
is well-posed in the Skorokhod space over $D$ equipped with the usual
Skorokhod topology. Hence, we can characterise an affine process with
admissible parameters $(a,\alpha,b,\beta,\nu,\mu)$ as the unique solution to the martingale problem with generator $L$ and domain $C_{c}^{\infty}(D)$. In any case, it can be constructed as a Markov process on the Skorokhod space over $D$. 
\end{Remark}
Affine processes are thus constructed on the canonical state space.
In order to prove the main result of this work, we provide in Section 4 a pathwise construction of affine processes. 
The latter one extends previous cases from the literature such as
\cite{DL06, FM09, M13} and \cite{BLP15}.

\subsection{Ergodicity in Wasserstein distance for affine processes}

Let $\mathcal{P}(D)$ be the space of all Borel probability measures
over $D$. By abuse of notation, we extend the transition semigroup
$(P_{t})_{t\ge0}$ (given by Theorem \ref{EXISTENCE:AFFINE}) onto
$\mathcal{P}(D)$ via
\begin{align}
(P_{t}\rho)(dx)=\int\limits _{D}P_{t}(\widetilde{x},dx)\rho(d\widetilde{x}),\qquad t\geq0,\ \ \rho\in\mathcal{P}(D).\label{EQ:05}
\end{align}
Then $P_{t}\rho$ describes the distribution of the affine process
at time $t\geq0$ such that it has at initial time $t=0$ law $\rho$.
Note that $P_{t}\delta_{x}=P_{t}(x,\cdot)$, and $(P_{t})_{t\ge0}$
is a semigroup on $\mathcal{P}(D)$ in the sense that $P_{t+s}\rho=P_{t}P_{s}\rho$,
for any $t,s\geq0$ and $\rho\in\mathcal{P}(D)$. Such semigroup property
is simply a compact notation for the Chapman-Kolmogorov equations
satisfied by $P_{t}(x,\cdot)$. Since the martingale problem with
generator $L$ and domain $C_{c}^{\infty}(D)$ is well-posed, and
$C_{c}^{\infty}(D)\subset D(L)$ is a core (see Theorem \ref{EXISTENCE:AFFINE}
and Remark \ref{COROLLARY:00}), it follows from \citep[Proposition 9.2]{EK86}
that, for some given $\pi\in\mathcal{P}(D)$, the following properties
are equivalent:
\begin{enumerate}
\item[(i)] $P_{t}\pi=\pi$, for all $t\geq0$.
\item[(ii)] $\int_{D}(Lf)(x)\pi(dx)=0$, for all $f\in C_{c}^{\infty}(D)$.
\item[(iii)] $\int_{D}(P_{t}f)(x)\pi(dx)=\int_{D}f(x)\pi(dx)$, for all $t\geq0$
and all $f\in B(D)$.
\end{enumerate}
A distribution $\pi\in\mathcal{P}(D)$ which satisfies one of these
properties (i) -- (iii) is called invariant distribution for the
semigroup $(P_{t})_{t\geq0}$.
In this work we will prove that, under some appropriate assumptions, $(P_t)_{t \geq 0}$
has a unique invariant distribution $\pi$,
this distribution has some finite $\log$-moment and,
moreover, $P_t(x,\cdot) \longrightarrow \pi$
with exponential rate.
For this purpose we use the Wasserstein distance
on $\mathcal{P}(D)$ introduced below. Given $\rho,\widetilde{\rho}\in\mathcal{P}(D)$,
a coupling $H$ of $(\rho,\widetilde{\rho})$ is a Borel probability
measure on $D\times D$ which has marginals $\rho$ and $\widetilde{\rho}$,
respectively, i.e., for $f,g\in B(D)$ it holds that
\[
\int\limits _{D\times D}\left(f(x)+g(\widetilde{x})\right)H(dx,d\widetilde{x})=\int\limits _{D}f(x)\rho(dx)+\int\limits _{D}g(x)\widetilde{\rho}(dx).
\]
Denote by $\mathcal{H}(\rho,\widetilde{\rho})$ the collection of
all such couplings. Let us now introduce two different metrics on
$D$ as follows:
\begin{enumerate}
\item[(a)] Define, for $\kappa\in(0,1]$, $d_{\kappa}(x,\widetilde{x})=\left(\1_{\{n>0\}}|y-\widetilde{y}|^{1/2}+|x-\widetilde{x}|\right)^{\kappa}$,
$x=(y,z),\ \widetilde{x}=(\widetilde{y},\widetilde{z})\in \R_{+}^{m}\times\R^{n}$,
and let
\[
\mathcal{P}_{d_{\kappa}}(D)=\left\{ \rho\in\mathcal{P}(D)\ |\ \int\limits _{D}|x|^{\kappa}\rho(dx)<\infty\right\} .
\]
\item[(b)] Introduce $d_{\log}(x,\widetilde{x})=\log(1+\1_{\{n>0\}}|y-\widetilde{y}|^{1/2}+|x-\widetilde{x}|)$,
$x=(y,z),\ \widetilde{x}=(\widetilde{y},\widetilde{z})\in \R_{+}^{m}\times\R^{n}$,
and let
\[
\mathcal{P}_{d_{\log}}(D)=\left\{ \rho\in\mathcal{P}(D)\ |\ \int\limits _{D}\log(1+|x|)\rho(dx)<\infty\right\} .
\]
\end{enumerate}
Let $d\in\{d_{\log},d_{\kappa}\}$. The Wasserstein distance on $\mathcal{P}_{d}(D)$
is defined by
\begin{align}\label{WASSERSTEIN:2}
W_{d}(\rho,\widetilde{\rho})=\inf\left\{ \int\limits _{D\times D}d(x,\widetilde{x})H(dx,d\widetilde{x})\ |\ H\in\mathcal{H}(\rho,\widetilde{\rho})\right\} .
\end{align}
The appearance of the additional factor $\1_{\{n>0\}}|y-\widetilde{y}|^{1/2}$
is purely technical, it is a consequence of the estimates proved in
Section 6. By general theory of Wasserstein distances we see that
$(\mathcal{P}_{d}(D),W_{d})$ is a complete seperable metric space,
see, e.g., \citep[Theorem 6.18]{V09}. Convergence with respect to
this distances is explained in the following remark, see also \citep[Theorem 6.9]{V09}.
\begin{Remark}\label{WASSERSTEIN:REMARK} Let $d\in\{d_{\log},d_{\kappa}\}$,
$(\rho_{n})_{n\in\N}\subset\mathcal{P}_{d}(D)$ and $\rho\in\mathcal{P}_{d}(D)$.
The following are equivalent
\begin{enumerate}
\item[(i)] $W_{d}(\rho_{n},\rho)\longrightarrow0$ as $n\to\infty$.
\item[(ii)] For each continuous function $f:D\longrightarrow\R$ with $|f(x)|\leq C_{f}(1+d(x,0))$,
it holds that
\[
\int\limits _{D}f(x)\rho_{n}(dx)\longrightarrow\int\limits _{D}f(x)\rho(dx),\qquad n\to\infty.
\]
\item[(iii)] $\rho_{n}\longrightarrow\rho$ weakly as $n\to\infty$, and
\[
\int\limits _{D}d(x,0)\rho_{n}(dx)\longrightarrow\int\limits _{D}d(x,0)\rho(dx),\qquad n\to\infty.
\]
\item[(iv)] $\rho_{n}\longrightarrow\rho$ weakly as $n\to\infty$, and
\[
\lim\limits _{R\to\infty}\limsup\limits _{n\to\infty}\int\limits _{D}d(x,0)\1_{\{d(x,0)\geq R\}}\rho_{n}(dx)=0.
\]
\end{enumerate}
\end{Remark} For simplicity of notation, we let $\mathcal{P}_{\kappa}(D)=\mathcal{P}_{d_{\kappa}}(D)$,
$\mathcal{P}_{\log}(D)=\mathcal{P}_{d_{\log}}(D)$, $W_{\kappa}=W_{d_{\kappa}}$,
and $W_{\log}=W_{d_{\log}}$. Then it is easy to see that $\mathcal{P}_{\kappa}(D)\subset\mathcal{P}_{\log}(D)$
and $W_{\log}\leq C_{\kappa}W_{\kappa}$, for some constant $C_{\kappa}>0$,
i.e., $W_{\kappa}$ is stronger then $W_{\log}$. The following is
our main result. \begin{Theorem}\label{THEOREM:01} Let $(a,\alpha,b,\beta,\nu,\mu)$
be admissible parameters. Suppose that $\beta$ has only eigenvalues with negative real parts, and
\begin{align}
\int\limits_{|\xi|>1}\log(|\xi|)\nu(d\xi)<\infty.\label{EQ:03}
\end{align}
Then $(P_t)_{t \geq 0}$ has a unique invariant distribution $\pi$ and the following assertions hold:
\begin{enumerate}
\item[(a)] $\pi\in\mathcal{P}_{\log}(D)$ and there exist constants $K,\delta>0$
such that, for all $\rho\in\mathcal{P}_{\log}(D)$,
\begin{align}
W_{\log}(P_{t}\rho,\pi)\leq K\min\left\{ e^{-\delta t},W_{\log}(\rho,\pi)\right\} +Ke^{-\delta t}W_{\log}(\rho,\pi),\qquad t\geq0.\label{EQ:14}
\end{align}
\item[(b)] If there exists $\kappa\in(0,1]$ satisfying
\begin{align}
\int\limits_{|\xi|>1}|\xi|^{\kappa}\nu(d\xi)<\infty,\label{FIRST:MOMENT}
\end{align}
then $\pi\in\mathcal{P}_{\kappa}(D)$ and there exists constants $K',\delta'>0$
such that, for all $\rho\in\mathcal{P}_{\kappa}(D)$,
\begin{align}
W_{\kappa}(P_{t}\rho,\pi)\leq K'W_{\kappa}(\rho,\pi)e^{-\delta't},\qquad t\geq0.\label{EQ:12}
\end{align}
\end{enumerate}
\end{Theorem}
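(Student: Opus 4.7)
The plan is to combine the pathwise SDE representation of Section~4 with a synchronous coupling of two copies of the affine process to obtain a pointwise contraction $\E\, d(X_t,\widetilde X_t)\le K e^{-\delta t}\, d(x,\widetilde x)$ in the semi-metric $d\in\{d_\kappa,d_{\log}\}$, and to deduce existence, uniqueness of $\pi$ and the rates (\ref{EQ:14})--(\ref{EQ:12}) by standard arguments in $(\mathcal{P}_d(D),W_d)$. Concretely, I would write $X$ as a strong solution of an SDE driven by a Brownian motion $B$ and Poisson random measures $N^\nu$, $N^{\mu_i}$ on $[0,\infty)\times D\times[0,\infty)$ with intensity $ds\,\mu_i(d\xi)\,du$. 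Starting from $x$ and $\widetilde x$ I would couple $X$ and $\widetilde X$ by sharing $B$ and all Poisson measures: an atom $(s,\xi,u)$ of $N^{\mu_i}$ triggers a jump in both copies when $u\le y_{i,s-}\wedge\widetilde y_{i,s-}$ and only in one copy when $y_{i,s-}\wedge\widetilde y_{i,s-}<u\le y_{i,s-}\vee\widetilde y_{i,s-}$, so the ``discrepancy intensity'' is $|y_{i,s-}-\widetilde y_{i,s-}|\,\mu_i(d\xi)$, while the $N^\nu$-driven jumps cancel exactly.

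I would then estimate $\E\, d(X_t,\widetilde X_t)$ using It\^o/Meyer's formula with a Yamada--Watanabe regularisation of $|\cdot|$ to handle the square-root diffusion coefficients $\sqrt{y_i}$ and $\sqrt{\widetilde y_i}$. For the $I$-component difference the linearised drift produces a system with off-diagonal entries $\beta_{ki}-\int\xi_k\mu_i(d\xi)$, which by condition~(iv) of Definition~\ref{defi: adm. parameter} is quasi-monotone; subcriticality of $\beta$ then forces exponential decay. For the $J$-component the cross-volatility $\alpha_{i,kl}y_i$ contributes a quadratic variation involving $(\sqrt{y_i}-\sqrt{\widetilde y_i})^2$, whose Yamada--Watanabe integration produces exactly the extra term $\1_{\{n>0\}}|y-\widetilde y|^{1/2}$ appearing in $d_\kappa$ and $d_{\log}$. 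I expect the main obstacle to be the $\log$-case: $\log(1+\cdot)$ is concave but the jumps of $N^\nu$ no longer cancel after composition with this nonlinear function and generate an additional It\^o correction controlled exactly by $\int_{|\xi|>1}\log|\xi|\,\nu(d\xi)$, which is where hypothesis (\ref{EQ:03}) is used. Under the stronger hypothesis (\ref{FIRST:MOMENT}) the analogous computation with $(\cdot)^\kappa$ replacing $\log(1+\cdot)$ gives the cleaner pure exponential contraction of part~(b).

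Once the pointwise contraction is obtained, integrating against an optimal coupling of $(\rho,\widetilde\rho)$ yields $W_d(P_t\rho,P_t\widetilde\rho)\le K e^{-\delta t}W_d(\rho,\widetilde\rho)$. A moment estimate based on the generalised Riccati equations (\ref{RICATTI}) and subcriticality shows that $\{P_t\delta_0:t\ge 0\}$ is bounded in $\mathcal{P}_d(D)$, so the contraction makes it $W_d$-Cauchy; its limit $\pi\in\mathcal{P}_d(D)$ is invariant by continuity of $P_s$ in $W_d$. Uniqueness is immediate from the contraction, and (\ref{EQ:14})--(\ref{EQ:12}) follow by applying it with $\widetilde\rho=\pi$; the two summands in (\ref{EQ:14}) reflect separately the uniform-in-$\rho$ regularisation needed for $W_{\log}$ to become finite starting from arbitrary initial data and the genuinely contractive part relative to $\pi$.
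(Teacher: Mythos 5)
Your overall architecture (synchronous coupling via the SDE of Section 4, pointwise estimate in the semi-metric, then convexity of $W_d$ and a fixed-point argument) is close in spirit to the paper, and for part (b) it would work: the paper indeed proves $\E(|X_t(x)-X_t(\widetilde x)|)\le Ke^{-\delta t}(\1_{\{n>0\}}|y-\widetilde y|^{1/2}+|x-\widetilde x|)$ (Proposition \ref{LEMMA:00}) and passes to $W_\kappa$ by Jensen. However, there is a genuine gap in your treatment of the $\log$-case, on two counts. First, the pointwise bound you announce, $\E\,d_{\log}(X_t,\widetilde X_t)\le Ke^{-\delta t}d_{\log}(x,\widetilde x)$, and the resulting pure contraction $W_{\log}(P_t\rho,P_t\widetilde\rho)\le Ke^{-\delta t}W_{\log}(\rho,\widetilde\rho)$, are false in general: already for a deterministic linear flow with $|X_t-\widetilde X_t|=e^{-\delta t}|x-\widetilde x|$, taking $|x-\widetilde x|=e^{2\delta t}$ gives $\log(1+e^{-\delta t}|x-\widetilde x|)\approx\delta t$ while $e^{-\delta' t}\log(1+|x-\widetilde x|)\approx 2\delta t\,e^{-\delta' t}\to0$, so no constants $K,\delta'$ can work. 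This is precisely why \eqref{EQ:14} has the weaker two-term form; the paper obtains it from the $W_1$-estimate for the state-dependent part $P^0_t$ (via $P_t(x,\cdot)=P^0_t(x,\cdot)\ast P_t(0,\cdot)$ and Lemma \ref{WASSERSTEIN}) combined with the elementary inequality $\log(1+ab)\le K\min\{a,\log(1+b)\}+Ka\log(1+b)$, see \eqref{EQ:09}. Your closing remark about ``two summands'' does not repair this, since your Cauchy/fixed-point step is built on the pure contraction.

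Second, your explanation of where \eqref{EQ:03} enters is incorrect. Under the synchronous coupling you describe, the $N^\nu$-atoms are common to both copies, so they cancel identically in the difference process; composing with the concave function $\log(1+\cdot)$ of the difference cannot resurrect them, and no It\^o correction involving $\nu$ appears in the coupled estimate. In the paper, \eqref{EQ:03} plays a different role: via Proposition \ref{PROP:MOMENT1}(b) it guarantees $P_t\mathcal{P}_{\log}(D)\subset\mathcal{P}_{\log}(D)$, hence finiteness of $W_{\log}(P_1\rho,\rho)$ for the Cauchy-sequence construction of $\pi$ and the membership $\pi\in\mathcal{P}_{\log}(D)$; the contraction mechanism itself involves only the state-dependent parameters. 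Relatedly, your claim that $\{P_t\delta_0\}_{t\ge0}$ is bounded in $\mathcal{P}_d(D)$ is not established in this paper (its moment bounds grow like $e^{Ct}$) and is not needed once one has the two-term estimate. Finally, be aware that the direct It\^o--Meyer/Yamada--Watanabe estimate you propose for the CBI-component difference is delicate (the $\mu_i$ are only square-integrable near the origin, the function $|\cdot|$ is not $C^2$, and the off-diagonal drift terms must be controlled with the correct sign); the paper avoids this entirely by the comparison principle (Lemma \ref{COMPARISON}) together with the exact first-moment formula of Lemma \ref{MOMENT}, which reduces \eqref{EST:00} to a computation with $e^{t\beta_{II}}$.
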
 
It is worthwhile to mention that to our knowledge a convergence rate solely under a $\log$-moment condition on the state-independent jump measure was not even obtained for one-dimensional CBI processes.
In order that $W_{\log}(P_{t}\rho,\pi)$ and $W_{\kappa}(P_{t}\rho,\pi)$
are well-defined, we need to show that $P_{t}\rho$ belongs to $\mathcal{P}_{\log}(D)$
or $\mathcal{P}_{\kappa}(D)$, respectively. This will be shown in
Section 5, where general moment estimates for affine processes are
studied. Using $P_{t}\delta_{x}=P_{t}(x,\cdot)$ combined with Remark
\ref{WASSERSTEIN:REMARK} we conclude the following.
\begin{Remark}
 Under the conditions of Theorem \ref{THEOREM:01}, there exist constants $\delta, K > 0$ such that
 \begin{align}\label{EQ:04}
  W_{d}(P_t(x,\cdot), \pi) \leq K e^{- \delta t} \left( 1 + W_{d}(\delta_x, \pi) \right), \qquad t \geq 0, \ x \in D,
 \end{align}
 where $d \in \{d_{\kappa}, d_{\log}\}$.
 Let $W_{d \wedge 1}$ be the Wasserstein distance given by
 \eqref{WASSERSTEIN:2} with $d$ replaced by $d \wedge 1$.
 Then similarly to Remark \ref{WASSERSTEIN:REMARK},
 convergence with respect to $W_{d \wedge 1}$ is equivalent to weak convergence of probability measures
 on $\mathcal{P}(D)$.
 Since $W_{d \wedge 1} \leq W_d$, we conclude from \eqref{EQ:04} that $P_{t}(x,\cdot)\longrightarrow\pi$
 weakly as $t\to\infty$ with exponential rate.
\end{Remark}
Let $X=(X_{t})_{t\geq0}$ be an affine process. For the parameter
estimation of affine models, see, e.g., \citep{BDLP13}, \citep{LM15}
and \citep{BBKP18}, it is often necessary to prove a Birkhoff ergodic
theorem, i.e.,
\begin{align}
\frac{1}{t}\int\limits _{0}^{t}f(X_{s})ds\longrightarrow\int\limits _{D}f(x)\pi(dx),\ \ t\to\infty\label{EQ:02}
\end{align}
holds almost surely for sufficiently many test functions $f$. Using
classical theory, see, e.g., \citep[Theorem 17.1.7]{MT09} and \citep{S17},
such convergence is implied by the ergodicity in the total variation
distance, i.e., by
\begin{align}
\lim\limits _{t\to\infty}\|P_{t}(x,\cdot)-\pi\|_{\mathrm{TV}}=0,\qquad x\in D,\label{TVERGODICITY}
\end{align}
where $\|\cdot\|_{\mathrm{TV}}$ denotes the total variation distance.
Unfortunately, it is typically a very difficult mathematical task
to prove \eqref{TVERGODICITY} even for particular models. An extension
of \eqref{EQ:02} applicable in the case where $P_{t}(x,\cdot)\longrightarrow\pi$
holds in the Wasserstein distance generated by the metric $d(x,\widetilde{x})=1\wedge|x-\widetilde{x}|$
was recently studied in \citep{S17}. Applying the main result of
\citep{S17} to the case of affine processes and using the fact that
each affine process can be obtained as a pathwise unique strong solution
to some stochastic equation with jumps (see Section 4), yields the
following corollary.
\begin{Corollary} Let $(a,\alpha,b,\beta,\nu,\mu)$
be admissible parameters. Suppose that $\beta$ has only eigenvalues
with negative real parts, and \eqref{EQ:03} is satisfied. Let $(X_{t})_{t\geq0}$
be the corresponding affine process constructed as the pathwise unique
strong solution on a complete probability space $(\Omega,\F,\P)$
in Section 4. Let $f\in L^{p}(D,\pi)$ for some $p\in[1,\infty)$, then \eqref{EQ:02} holds in $L^{p}(\Omega,\P)$.
\end{Corollary}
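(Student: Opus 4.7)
The plan is to combine the exponential ergodicity of Theorem~\ref{THEOREM:01}(a) with the pathwise realisation from Section~4 and then invoke the main result of~\citep{S17}, which provides an $L^{p}$ Birkhoff ergodic theorem for Markov processes that converge exponentially fast to their invariant distribution in the Wasserstein distance $W_{d\wedge 1}$ generated by the bounded metric $1\wedge|x-\tilde{x}|$. As noted after Theorem~\ref{THEOREM:01}, this distance metrises weak convergence on $\mathcal{P}(D)$.

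The first step is to transport the exponential rate from $W_{\log}$ to $W_{d\wedge 1}$. A pointwise comparison of the metrics yields $1\wedge|x-\tilde{x}|\leq C\, d_{\log}(x,\tilde{x})$ for a universal $C>0$: when $|x-\tilde{x}|\leq 1$ this is the elementary inequality $\log(1+r)\geq (\log 2)\,r$ valid on $r\in[0,1]$, and when $|x-\tilde{x}|>1$ one uses $d_{\log}(x,\tilde{x})\geq \log 2$. Taking an infimum over couplings, $W_{d\wedge 1}\leq C\,W_{\log}$ on $\mathcal{P}_{\log}(D)$, so applying~\eqref{EQ:04} with $d=d_{\log}$ gives
\[
W_{d\wedge 1}(P_{t}(x,\cdot),\pi)\leq CK e^{-\delta t}\bigl(1+W_{\log}(\delta_{x},\pi)\bigr),\qquad t\geq 0,\ x\in D,
\]
where $W_{\log}(\delta_{x},\pi)<\infty$ because $\pi\in\mathcal{P}_{\log}(D)$ by Theorem~\ref{THEOREM:01}(a). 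Thus $(P_{t})_{t\geq 0}$ converges exponentially fast to $\pi$ in $W_{d\wedge 1}$ from every deterministic initial condition.

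The second step is to apply~\citep{S17}. The construction in Section~4 realises $X=(X_{t})_{t\geq 0}$ as the pathwise unique strong solution of a stochastic equation driven by Brownian motions and Poisson random measures on the complete probability space $(\Omega,\F,\P)$. In particular $X$ has c\`adl\`ag paths, is adapted to a standard filtration, and is Markov with transition semigroup $(P_{t})_{t\geq 0}$, so the time averages $t^{-1}\int_{0}^{t}f(X_{s})\,ds$ are well-defined random variables for every Borel $f$. These are precisely the standing hypotheses of~\citep{S17}; combined with the exponential ergodicity in $W_{d\wedge 1}$ just established and with $f\in L^{p}(D,\pi)$, its main theorem delivers the $L^{p}(\Omega,\P)$-convergence asserted in~\eqref{EQ:02}.

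The main difficulty is not conceptual but a careful matching of notation and hypotheses between~\citep{S17} and the present setting: one must verify that the initial-moment assumption under which~\citep{S17} formulates its $L^{p}$ statement is compatible with $\rho=\delta_{x}$ for every $x\in D$, and that the exponential rate and prefactor extracted from~\eqref{EQ:04} feed correctly into the quantitative estimate of~\citep{S17}. Once these bookkeeping points are resolved the corollary follows at once.
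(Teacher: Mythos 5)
Your proposal is correct and follows essentially the same route as the paper, which proves the corollary simply by invoking the main result of \citep{S17} together with the exponential Wasserstein ergodicity of Theorem \ref{THEOREM:01} (transferred to the bounded metric $1\wedge|x-\widetilde{x}|$, as in the remark following that theorem) and the pathwise strong-solution construction of Section 4. Your explicit comparison $1\wedge|x-\widetilde{x}|\leq C\,d_{\log}(x,\widetilde{x})$ just makes precise the step the paper leaves implicit.
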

Although we have formulated \eqref{EQ:02} in continuous
time, the discrete-time analog can be obtained in
the same manner.

\subsection{Comparison with related works}

Consider an \textit{Ornstein-Uhlenbeck process} on $\R^n$, i.e., an affine process on state space $D=\R^{n}$ with admissible
parameters $(a,\alpha = 0,b,\beta,\nu,\mu = 0)$.
If $\beta$ has only eigenvalues with negative real parts and \eqref{EQ:03} is satisfied, then \citep{SY84} is applicable and hence the corresponding
Ornstein-Uhlenbeck process satisfies, for all $x\in\R^{n}$, $P_{t}(x,\cdot)\longrightarrow\pi$ weakly as $t\to\infty$. Under additional technical conditions on the measure $\nu$, it follows that the corresponding process also satisfies \eqref{TVERGODICITY} with exponential rate, see \citep{W12}. Since in view of Theorem \ref{THEOREM:01} the convergence (in the Wasserstein distance) has already exponential rate, we conclude that the additional restriction on $\nu$ imposed in \citep{W12} is only used to guarantee that convergence takes place in the stronger total variation distance, i.e., it is not necessary for the speed of convergence.

Consider a subcritical \textit{multi-type CBI process} on $\R_+^m$, i.e., an affine process on state space $D = \R_+^m$ for which the parameter $\beta$ has only eigenvalues with negative real parts. In dimension $m = 1$, Pinsky \cite{P72} announced (without proof) the existence of a limiting distribution under condition \eqref{EQ:03}. A proof of this fact was then given in \cite[Theorem 3.16]{KS08},
while in \citep[Theorem 3.20 and Corollary 3.21]{L11} it was shown 
that $P_{t}(x,\cdot)\longrightarrow\pi$
is equivalent to \eqref{EQ:03}. Some properties of the invariant distribution $\pi$ have been studied in \cite{KM12}.
In \citep{LM15} exponential ergodicity in total variation distance,
see \eqref{TVERGODICITY}, was established for one-dimensional
subcritical CBI processes with $\nu=0$, while some other related results for stochastic equations on $\R_+$ have been recently considered in \citep{2019arXiv190202833F}.
An extension of the techniques from \cite{LM15} 
to arbitrary dimension $m \geq 2$ is still an interesting open problem.  Recently, in \citep{MSV18} another approach for the exponential ergodicity in the total variation
distance for affine processes on cones, including multi-type CBI processes, was provided. Their techniques were closely related to stochastic stability of Markov processes in the sense of Meyn and Tweedie \citep{MT09}, see also the references therein. More precisely, it was shown that each subcritical
CBI process $X$ which is $\nu$-irreducible, aperiodic and has finite
second moments, where $\nu$ is a reference measure with its support
having non-empty interior, is exponentially ergodic in the total variation
distance. As such a result is formulated in a very general way, it becomes a delicate mathematical task to show that such conditions
are satisfied for CBI processes with jumps of infinite activity
or with degenerate diffusion components. Moreover, assuming that $X$
has at least finite second moments rules out some natural examples
as studied in \citep{LM15} for $m=1$ and in Section 2 of this work.
In contrast, our results can be applied in arbitrary dimension without the need to prove irreducibility or aperiodicity,
paying the price that we use the Wasserstein distance instead.
Let us mention that recently also asymptotic results for supercritical CBI processes have been obtained in \cite{KPR17, BPP18, BPP18b}.

Consider now the general case of an \textit{affine process} on the canonical state space $D = \R_+^m \times \R^n$. 
Based on the stability theory of Markov chains in the sense of Meyn and Tweedie the long-time behavior of some particular two-dimensional models on state space $D = \R_+ \times \R$ was studied in \cite{BDLP14, JKR17b}.These results have been further developed in \citep{GZ18} for arbitrary dimensions, where also functional limit theorems were obtained. In order to prove irreducibility and aperiodicity, the authors supposed that the diffusion compnent is non-degenerate and that $\nu$ and $\mu_{1},\dots,\mu_{m}$ are probability measures, i.e., the corresponding affine process has only jumps of finite variation.
Independently in \cite{JKR18} the following result was obtained.
\begin{Theorem}\citep{JKR18}\label{JKR:THEOREM}
Let $(a,\alpha,b,\beta,\nu,\mu)$ be admissible parameters. Suppose
that $\beta$ has only eigenvalues with negative real parts and
\eqref{EQ:03} is satisfied.
Then there exists a unique invariant distribution $\pi$ for $(P_{t})_{t\geq0}$. Moreover, $\pi$ has Laplace transform
\begin{align}
\int\limits _{D}e^{\langle u,x\rangle}\pi(dx)=\exp\left(\int\limits _{0}^{\infty}F(\psi(t,u))dt\right),\qquad u\in\mathcal{U},\label{INVARIANT}
\end{align}
and one has, for all $x\in D$, $P_{t}(x,\cdot)\longrightarrow\pi$
weakly as $t\to\infty$.
\end{Theorem}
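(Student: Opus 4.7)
The strategy is to pass to the limit $t\to\infty$ in the Fourier-Laplace representation \eqref{LAPLACE} and identify the limit as the transform of a probability measure $\pi$; once the convergence $P_t(x,\cdot)\to\pi$ is established, invariance of $\pi$ will follow from the semigroup identity together with the Feller property, and uniqueness will be automatic. First I would analyse the solution $\psi(t,u)$ of the generalized Riccati system. Since $\beta_{IJ}=0$, the matrix $\beta$ is block lower triangular, and the subcriticality assumption forces both $\beta_{II}$ and $\beta_{JJ}$ to have spectra in the open left half-plane. Hence $\psi_J(t,u)=e^{t\beta_{JJ}^{\top}}u_2$ decays exponentially. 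For $\psi_I$, the Riccati equation becomes an ODE with exponentially decaying coupling to $\psi_J$; using the L\'evy-Khintchine structure of $R$, the linearisation $\partial_{v_i} R_i(0,0)=\beta_{ii}$ together with the admissibility sign conditions on $\beta_{II}$, and the fact that $\mathrm{Re}\,\psi_I\leq 0$ is inherited from the existence of the semigroup, a standard Gr\"onwall/linearisation argument yields $|\psi(t,u)|\leq C(u) e^{-\delta t}$ for some $\delta>0$, with $C$ bounded on compact subsets of $\mathcal{U}$.

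Next I would establish the absolute integrability $\int_0^\infty |F(\psi(s,u))|\,ds<\infty$, which then gives the pointwise convergence $\phi(t,u)\to\int_0^\infty F(\psi(s,u))\,ds$. Writing $F$ as the sum of its diffusion, drift and jump parts, the first two contribute terms of order $O(|\psi(s,u)|^2)$ and $O(|\psi(s,u)|)$, both integrable in $s$ by the previous step. For the jump integral $\int_D(e^{\langle\psi(s,u),\xi\rangle}-1-\1_{\{|\xi|\leq 1\}}\langle\xi_J,\psi_J(s,u)\rangle)\nu(d\xi)$, the integrand on $\{|\xi|\leq 1\}$ is $O(|\psi(s,u)|^2|\xi|^2)$ and is controlled via $\int(1\wedge|\xi|^2)\nu(d\xi)<\infty$. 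On $\{|\xi|>1\}$ one uses the uniform bound $|e^{\langle\psi(s,u),\xi\rangle}-1|\leq C\bigl(|\psi(s,u)|\log(1+|\xi|)\wedge 1\bigr)$ valid for $u\in\mathcal{U}$; applying Fubini-Tonelli and integrating in $s$ first, for each fixed $\xi$ with $|\xi|>1$ the time-integral of the right-hand side is $O(\log(1+|\xi|))$, and the remaining $\xi$-integral is finite precisely by the $\log$-moment assumption \eqref{EQ:03}.

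Combining the two steps, the Fourier-Laplace transforms converge pointwise on $\mathcal{U}$ to
\[
 L(u):=\exp\left(\int_0^\infty F(\psi(s,u))\,ds\right),
\]
independently of the starting point $x\in D$. The limit $L$ is continuous in $u\in\mathcal{U}$ by dominated convergence and satisfies $L(0)=1$. Restricting to $u\in i\R^m\times i\R^n\subset\mathcal{U}$, where $L$ reduces to a characteristic function on $D$, L\'evy's continuity theorem produces a probability measure $\pi$ on $D$ whose characteristic function coincides with $L$ on the imaginary axis; the Laplace transform of $\pi$ then agrees with $L$ on all of $\mathcal{U}$ by analytic continuation, yielding \eqref{INVARIANT}, and $P_t(x,\cdot)\to\pi$ weakly as $t\to\infty$. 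Invariance of $\pi$ follows from writing $P_{t+s}(x,\cdot)=P_s(P_t(x,\cdot))$, letting $t\to\infty$ and using the Feller property of $(P_s)_{s\geq 0}$; any other invariant probability $\pi'$ satisfies $\pi'=P_t\pi'\to\pi$, forcing $\pi'=\pi$. The hard part of the plan is the jump estimate in the second step: trading off the exponential decay of $|\psi(s,u)|$ in $s$ against the logarithmic growth in $|\xi|$, so that the large-jump contribution of $F(\psi(s,u))$ becomes time-integrable using only the minimal condition \eqref{EQ:03}, while keeping the bound uniform enough on compact sets in $\mathcal{U}$ to drive the dominated convergence argument for the continuity of $L$.
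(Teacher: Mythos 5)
Note first that this theorem is not proved in the paper at all: it is quoted verbatim from \citep{JKR18}, and the paper only remarks that the proof there ``is based on a fine stability analysis of the Riccati equations \eqref{RICATTI}.'' Your outline is therefore not comparable to an in-paper argument but reconstructs precisely the route of the cited work: exponential decay of $\psi(t,u)$, time-integrability of $F(\psi(\cdot,u))$ under the $\log$-moment condition \eqref{EQ:03}, convergence of the transforms in \eqref{LAPLACE} to \eqref{INVARIANT}, and L\'evy continuity. The paper itself takes a genuinely different path to existence and uniqueness of $\pi$ (as a by-product of Theorem~\ref{THEOREM:01}, via the stochastic equation \eqref{SDE}, the pathwise contraction estimate of Proposition~\ref{LEMMA:00} and the Wasserstein completeness argument of Lemma~\ref{LEMMA:01}); that route yields exponential rates in $W_{\log}$ and $W_{\kappa}$ but not the Laplace-transform formula \eqref{INVARIANT}, which only the analytic Riccati route gives. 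So your plan and the paper's machinery are complementary rather than overlapping.

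Two points in your sketch deserve flagging. First, the decay $|\psi(t,u)|\leq C(u)e^{-\delta t}$, uniformly on compacts of $\mathcal{U}$, is not a ``standard Gr\"onwall/linearisation argument'': linearising $R$ at $0$ (Jacobian $\beta_{II}^{\top}$) only gives local stability, and the real work --- the ``fine stability analysis'' of \citep{JKR18} --- is to show that $\psi_{I}(t,u)$ is driven into a neighbourhood of $0$ from \emph{every} initial value $u_{1}\in\mathbb{C}^{m}_{\leq 0}$, with control uniform on compacts, despite the quadratic and jump nonlinearities in $R$ and the exponentially decaying but nontrivial coupling through $\psi_J$. As written, this step is asserted rather than proved, and it is the crux of the theorem. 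Second, the pointwise bound $|e^{\langle\psi(s,u),\xi\rangle}-1|\leq C\bigl(|\psi(s,u)|\log(1+|\xi|)\wedge 1\bigr)$ is false: take $\langle\psi,\xi\rangle$ purely imaginary of modulus $1$ with $|\psi|$ small, so the left side is of order $1$ while the right side is $O(|\psi|\log(1+1/|\psi|))\to 0$. The correct and standard estimate is $|e^{\langle\psi,\xi\rangle}-1|\leq 2\wedge|\psi(s,u)||\xi|$ (valid since $\mathrm{Re}\,\langle\psi,\xi\rangle\leq 0$), and it is the \emph{time} integral $\int_{0}^{\infty}\bigl(2\wedge Ce^{-\delta s}|\xi|\bigr)ds=O(\log(1+|\xi|))$ that produces the logarithm matched by \eqref{EQ:03}; with this substitution your Fubini argument goes through unchanged. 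The remaining steps (L\'evy continuity on the imaginary axis, extension to $\mathcal{U}$, invariance via the semiflow or the $C_b$-Feller property, uniqueness by $\pi'=P_{t}\pi'\to\pi$) are fine.
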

The proof of Theorem \ref{JKR:THEOREM}
is based on a fine stability analysis of the Riccati equations \eqref{RICATTI}. Comparing with our main result Theorem \ref{THEOREM:01},
the authors have, in addition,
established a formula for the Laplace transform of $\pi$, but have not studied any convergence rate. We emphasize that the main aim of our Theorem \ref{THEOREM:01} is to establish the exponential convergence speed (\ref{EQ:14}) and (\ref{EQ:12}) with respect to the corresponding Wasserstein metrics. However, in the process of proving (\ref{EQ:14}) we also obtain the existence and uniqueness of an invariant distribution as a natural by-product.
Moreover, in Theorem \ref{THEOREM:01} and
Theorem \ref{JKR:THEOREM} existence
and uniqueness of an invariant distribution is shown
by essentially different techniques.

\subsection{Main idea of proof and structure of the work}

The proof of Theorem \ref{THEOREM:01} is divided in 4 steps as explained
below.

\textbf{Step 1.} Provide a stochastic description of conservative
affine processes. More precisely, in Section 3 we discuss a stochastic
equation for multi-type CBI processes and recall a comparison principle
due to \citep{BLP15}. In Section 4 we prove that each affine process
can be obtained as the pathwise unique strong solution $(X_t(x))_{t \geq 0}$ to a certain stochastic equation, where $x = (y,z) \in \R_+^m \times \R^n$ denotes the initial condition. 
The particular structure of this equation shows that the process takes the form $X_t(x) = (Y_t(y), Z_t(x))$, 
where $(Y_t(y))_{t \geq 0} \subset \R_+^m$ is a CBI process with initial condition $y$ and $(Z_t(x))_{t \geq 0}$ is an OU-type process with initial condition $z$ whose coefficients depend on the process $(Y_t(y))_{t \geq 0}$.

\textbf{Step 2.} Let $(X_{t})_{t\geq0}$ be an affine process. Based
on the stochastic equation from the first step, we study in Section
5 finiteness of the moments $\E(|X_{t}|^{\kappa})$ and $\E(\log(1+|X_{t}|))$.
Since the proofs in this section are rather standard, we only outline
the main steps, while technical details are postponed to the appendix.

\textbf{Step 3.} Let $\left(X_{t}(x)\right)_{t\ge0}$ and $\left(X_{t}(\widetilde{x})\right)_{t\ge0}$
be the affine processes with initial states $x, \ \widetilde{x}\in \R_{+}^{m}\times\R^{n}$,
respectively, obtained as the unique strong solutions to the stochastic
equation discussed in Section 4. Suppose that \eqref{FIRST:MOMENT}
is satisfied for $\kappa=1$. The following key estimate is proved
in Section 6:
\begin{align}\label{EQ:11}
\E(|X_{t}(x)-X_{t}(\widetilde{x})|)\leq Ke^{-\delta t}\left(\1_{\{n>0\}}|y-\widetilde{y}|^{1/2}+|x-\widetilde{x}|\right),\qquad t\geq0,
\end{align}
where $K,\delta>0$ are some constants.
Indeed, write $X_t(x) = (Y_t(y),Z_t(x))$ and $X_t(\widetilde{x}) = (Y_t(\widetilde{y}), Z_t(\widetilde{x}))$, respectively.
Using the comparison principle for the CBI component we prove that
\begin{align}\label{EQ:16}
 \E(|Y_t(x) - Y_t(\widetilde{x})|) \leq d |y - \widetilde{y}|e^{- \delta't},
\end{align}
where $\delta' > 0$ is some constant. 
From this and the particular structure of the stochastic equation 
solved by $(X_t(x))_{t \geq 0}$ and $(X_t(\widetilde{x}))_{t \geq 0}$
we then easily deduce \eqref{EQ:11}.
In the literature the proof of similar inequalities to \eqref{EQ:11} and \eqref{EQ:16} is often based on the construction of a successfull coupling which is typically a difficult task. 
In the framework of affine processes a surprisingly simple proof of such estimates is given in Section 6 by using monotone couplings as explained above.

\textbf{Step 4.} The results obtained in Steps 1 -- 3 provide us
all necessary tools to give a full proof of Theorem \ref{THEOREM:01}
in Section 7. For the sake of simplicity, we explain below how \eqref{EQ:12}
is shown. Estimate \eqref{EQ:14} can be obtained in the same way.
Using classical arguments, we may deduce assertion \eqref{EQ:12}
from the contraction estimate
\begin{align}
W_{\kappa}(P_{t}\rho,P_{t}\widetilde{\rho})\leq Ke^{-\delta t}W_{\kappa}(\rho,\widetilde{\rho}),\qquad t\geq0.\label{EQ:06}
\end{align}
Next observe that, by the convexity of the Wasserstein distance (see
Lemma \ref{WASSERSTEIN:1}) combined with \eqref{EQ:05}, property
\eqref{EQ:06} is implied by
\begin{align}
W_{\kappa}(P_{t}\delta_{x},P_{t}\delta_{\widetilde{x}})\leq Ke^{-\delta t}\left(\1_{\{n>0\}}|y-\widetilde{y}|^{1/2}+|x-\widetilde{x}|\right)^{\kappa},\qquad t\geq0.\label{EQ:07}
\end{align}
Let $(P_{t}^{0})_{t\geq0}$ be the transition semigroup for the affine
process with admissible parameters $(a=0,\alpha,b=0,\beta,m=0,\mu)$.
In view of \eqref{LAPLACE} one has $P_{t}(x,\cdot)=P_{t}^{0}(x,\cdot)\ast P_{t}(0,\cdot)$,
where $\ast$ denotes the usual convolution of measures. A similar
decomposition for affine processes was also used in \citep{JKR18}.
Applying now Lemma \ref{WASSERSTEIN} and the Jensen inequality gives
\begin{align*}
W_{\kappa}(P_{t}\delta_{x},P_{t}\delta_{\widetilde{x}}) & \leq W_{\kappa}(P_{t}^{0}\delta_{x},P_{t}\delta_{\widetilde{x}})\\
 & \leq(W_{1}(P_{t}^{0}\delta_{x},P_{t}^{0}\delta_{\widetilde{x}}))^{\kappa}\leq K^{\kappa}e^{-\delta\kappa t}\left(\1_{\{n>0\}}|y-\widetilde{y}|^{1/2}+|x-\widetilde{x}|\right)^{\kappa},
\end{align*}
where the last inequality follows from Step 3 applied to $(P_{t}^{0})_{t\geq0}$.

\section{Examples}

\subsection{Anisotropic $(\gamma_{1},\gamma_{2})$-root process}

Let $Z_{1},Z_{2}$ be independent one-dimensional Lévy processes with
symbols
\[
\Psi_{j}(\xi)=\int\limits _{0}^{\infty}\left(e^{-\xi z}-1+\xi z\right)\frac{dz}{z^{1+\gamma_{j}}},\qquad\xi\ge0,\ j=1,2,
\]
where $\gamma_{1},\gamma_{2}\in(1,2)$. Let $S=(S_{1},S_{2})$ be
another $2$-dimensional Lévy process with symbol
\[
\Psi_{\nu}(\xi)=\int\limits _{\R_{+}^{2}}\left( e^{-\langle\xi,z\rangle}-1\right)\nu(dz),\qquad\xi\in\R_{+}^{2},
\]
where $\nu$ is a measure on $\R_{+}^{2}$ with $\nu\left(\left\{ 0\right\} \right)=0$
and
\[
\int\limits_{\R_{+}^{2}}\left(1\wedge|z|\right)\nu(dz)<\infty.
\]
Suppose that $Z$ and $S$ are independent. Applying the results
of \citep{BLP15} to this particular case shows that, for each $x\in\R_{+}^{2}$,
there exists a pathwise unique strong solution to
\begin{align*}
dX_{1}(t)=\left(b_{1}+\beta_{11}X_{1}(t)+\beta_{12}X_{2}(t)\right)dt+X_{1}(t-)^{1/\gamma_{1}}dZ_{1}(t)+dS_{1}(t),\\
dX_{2}(t)=\left(b_{2}+\beta_{21}X_{1}(t)+\beta_{22}X_{2}(t)\right)dt+X_{2}(t-)^{1/\gamma_{2}}dZ_{2}(t)+dS_{2}(t),
\end{align*}
This process is an affine process on $D=\R_{+}^{2}$ with admissible
parameters
\[
a=0,\ \ \alpha_{1}=\alpha_{2}=0,\ \ b=\begin{pmatrix}b_{1}\\
b_{2}
\end{pmatrix},\ \ \beta=\begin{pmatrix}\beta_{11} & \beta_{12}\\
\beta_{21} & \beta_{22}
\end{pmatrix}
\]
and corresponding Lévy measures $\nu$,
\[
\mu_{1}(d\xi)=\frac{d\xi_{1}}{\xi_{1}^{1+\gamma_{1}}}\otimes\delta_{0}(d\xi_{2}),\ \ \mu_{2}(d\xi)=\delta_{0}(d\xi_{1})\otimes\frac{d\xi_{2}}{\xi_{2}^{1+\gamma_{2}}}.
\]
Applying our main result to this particular case gives the following.
\begin{Corollary} If $\beta$ has only eigenvalues with negative
real parts and $\nu$ satisfies
\[
\int\limits _{|\xi|>1}\log(|\xi|)\nu(d\xi)<\infty,
\]
then the assertions of Theorem \ref{THEOREM:01} are true. \end{Corollary}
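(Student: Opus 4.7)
The plan is to deduce the corollary as a direct application of Theorem \ref{THEOREM:01} to the $(\gamma_1,\gamma_2)$-root process. Two things have to be checked: that the stated parameter tuple $(a,\alpha,b,\beta,\nu,\mu)$ is admissible in the sense of Definition \ref{defi: adm. parameter}, and that the hypotheses of Theorem \ref{THEOREM:01} are satisfied under the assumptions of the corollary. Once these are in place, since the solution of the SDE was already identified in the example as the affine process with the listed parameters, Theorem \ref{THEOREM:01} applies verbatim.

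For admissibility, here $d=m=2$, $n=0$, $I=\{1,2\}$, and $J=\emptyset$, so conditions (i) and (ii) of Definition \ref{defi: adm. parameter} are trivial because $a=0$ and $\alpha_1=\alpha_2=0$, and all block conditions on $A_{IJ}$, $A_{JI}$, $A_{JJ}$ are vacuous. For (iii), $b\in\R_+^2=D$ by construction. For (iv), $\beta_{IJ}=0$ is vacuous; the integrals $\int_D \xi_k\mu_i(d\xi)$ with $k\in I\setminus\{i\}$ vanish because $\mu_i$ is supported on the $i$-th axis (the other component is a Dirac at $0$), so the condition $\beta_{ki}-\int_D\xi_k\mu_i(d\xi)\ge 0$ reduces to $\beta_{12},\beta_{21}\ge 0$, which is implicit in the setup of the example. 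For (v), the stated hypothesis $\int_{\R_+^2}(1\wedge|z|)\nu(dz)<\infty$ dominates $1\wedge|\xi|^2$ and each $1\wedge\xi_i$, so the integrability requirement holds. For (vi), $\mu_i(\{0\})=0$ is clear; the cross terms $\sum_{k\in I\setminus\{i\}}\xi_k$ again integrate to zero against $\mu_i$, and
\[
\int_D (|\xi|\wedge|\xi|^2)\,\mu_i(d\xi)=\int_0^\infty(\xi_i\wedge\xi_i^2)\,\xi_i^{-1-\gamma_i}\,d\xi_i=\int_0^1\xi_i^{1-\gamma_i}\,d\xi_i+\int_1^\infty\xi_i^{-\gamma_i}\,d\xi_i,
\]
which is finite because $\gamma_i\in(1,2)$. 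Hence the tuple is admissible.

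It remains to match the hypotheses of Theorem \ref{THEOREM:01}. The assumption that $\beta$ has only eigenvalues with negative real parts is imposed directly in the corollary, and the log-moment condition \eqref{EQ:03} is exactly the integrability condition on $\nu$ in the statement. Therefore Theorem \ref{THEOREM:01}(a) gives existence and uniqueness of an invariant distribution $\pi\in\mathcal{P}_{\log}(D)$ together with the convergence rate \eqref{EQ:14} in $W_{\log}$, and whenever \eqref{FIRST:MOMENT} holds for some $\kappa\in(0,1]$, part (b) yields $\pi\in\mathcal{P}_{\kappa}(D)$ and the stronger rate \eqref{EQ:12} in $W_{\kappa}$.

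Since the calculation is purely a matter of matching definitions, I do not anticipate any real obstacle; the only mildly delicate point is the implicit sign constraints $\beta_{12},\beta_{21}\ge 0$ needed for (iv), which are forced by the requirement that the SDE preserves $\R_+^2$ and are part of the specification of an affine process on this cone.
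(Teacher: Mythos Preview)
Your proposal is correct and matches the paper's approach: the paper gives no explicit proof for this corollary, simply writing ``Applying our main result to this particular case gives the following,'' and your argument spells out precisely the routine verification of admissibility (Definition~\ref{defi: adm. parameter}) together with the observation that the hypotheses of Theorem~\ref{THEOREM:01} coincide with those of the corollary. The only implicit assumptions you flag---$b\in\R_+^2$ and $\beta_{12},\beta_{21}\ge 0$---are indeed tacitly required for the process to be affine on $\R_+^2$, as the paper asserts without further comment.
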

Convergence in total variation distance for a similar one-dimensional
model was studied in \citep{LM15}. Similar two-dimensional processes
were also studied in \citep{BDLP14} and \citep{JKR17}. In view of
our main result Theorem \ref{THEOREM:01}, it is straightforward
to extend this model to arbitrary dimension $d\geq 2$, with possibly
non-vanishing diffusion part and more general driving noise of Lévy
type.

\subsection{Stochastic volatility model}

Let $D=\R_{+}\times\R$, i.e., $m=n=1$. Let $(V,Y)$ be the unique
strong solution to
\begin{align*}
dV(t) & =(b_{1}+\beta_{11}V(t))dt+\sqrt{V(t)}dB_{1}(t)+dJ_{1}(t),\\
dY(t) & =(b_{2}+\beta_{22}Y(t))dt+\sqrt{V(t)}\left(\rho dB_{1}(t)+\sqrt{1-\rho^{2}}dB_{2}(t)\right)+dJ_{2}(t)
\end{align*}
where $b_{1}\geq0$, $b_{2}\in\R$, $\beta_{11},\beta_{22}\in\R$,
$\rho\in(-1,1)$ is the correlation coefficient, $B=(B_{1},B_{2})$
is a two-dimensional Brownian motion, $J_{1}$ is a one-dimensional
Lévy subordinator with Lévy measure $\nu_{1}$, and $J_{2}$ a one-dimensional
Lévy process with Lévy measure $\nu_{2}$. Suppose that $B,\ J_{1}$
and $J_{2}$ are mutually independent. It is not difficult to see
that $(V,Y)$ is an affine process with admissible parameters
\[
a=0,\ \ \alpha_{1}=\begin{pmatrix}1 & \rho\\
\rho & 1
\end{pmatrix},\ \ b=\begin{pmatrix}b_{1}\\
b_{2}
\end{pmatrix},\ \ \beta=\begin{pmatrix}\beta_{11} & 0\\
0 & \beta_{22}
\end{pmatrix}
\]
and measures
\[
\nu(d\xi)=\nu_{1}(d\xi_{1})\otimes\delta_{0}(d\xi_{2})+\delta_{0}(d\xi_{1})\otimes \nu_{2}(d\xi_{2}),\ \ \mu_{1}=\mu_{2}=0.
\]
Then we obtain the following. \begin{Corollary} If $\beta_{11},\beta_{22}<0$
and
\[
\int\limits _{(1,\infty)}\log(\xi_{1})\nu_{1}(d\xi_{1})+\int\limits _{|\xi_{2}|>1}\log(|\xi_{2}|)\nu_{2}(d\xi_{2})<\infty,
\]
then the assertions of Theorem \ref{THEOREM:01} are true. \end{Corollary}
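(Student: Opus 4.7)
The plan is to apply Theorem \ref{THEOREM:01} directly to the admissible parameters $(a,\alpha,b,\beta,\nu,\mu)$ associated to the stochastic volatility model. So the proof reduces to checking the two hypotheses of Theorem \ref{THEOREM:01}: the spectral condition on $\beta$, and the $\log$-moment condition \eqref{EQ:03} on the state-independent jump measure $\nu$.

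First, since $\beta$ is diagonal with diagonal entries $\beta_{11},\beta_{22}<0$, its eigenvalues are exactly $\beta_{11}$ and $\beta_{22}$ and hence all have strictly negative real parts. This gives the subcriticality assumption.

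Next, I would decompose the integral $\int_{|\xi|>1}\log(|\xi|)\nu(d\xi)$ along the two one-dimensional pieces of $\nu=\nu_{1}(d\xi_{1})\otimes\delta_{0}(d\xi_{2})+\delta_{0}(d\xi_{1})\otimes\nu_{2}(d\xi_{2})$. On the first piece one has $\xi_{2}=0$ and $\nu_{1}$ is supported on $\R_{+}$, so $|\xi|=\xi_{1}$ and the integral becomes $\int_{\xi_{1}>1}\log(\xi_{1})\nu_{1}(d\xi_{1})$; analogously the second piece yields $\int_{|\xi_{2}|>1}\log(|\xi_{2}|)\nu_{2}(d\xi_{2})$. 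Both are finite by assumption, so \eqref{EQ:03} holds.

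With both hypotheses verified, Theorem \ref{THEOREM:01} applies to $(V,Y)$ and delivers existence and uniqueness of the invariant distribution $\pi$ together with the exponential convergence \eqref{EQ:14} in $W_{\log}$. Part (b) with any $\kappa\in(0,1]$ for which $\int_{|\xi_1|>1}\xi_1^{\kappa}\nu_1(d\xi_1)+\int_{|\xi_2|>1}|\xi_2|^{\kappa}\nu_2(d\xi_2)<\infty$ follows in the same way. The only non-routine point is simply matching the Euclidean norm $|\xi|$ on $\R^{2}$ with the one-dimensional variables $\xi_{1},\xi_{2}$ on the support of each summand of $\nu$, which is trivial here because each summand is supported on a coordinate axis; there is no real obstacle.
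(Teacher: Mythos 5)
Your proof is correct and matches the paper's (implicit) argument: the corollary is a direct application of Theorem \ref{THEOREM:01}, and checking its hypotheses amounts to noting that the diagonal matrix $\beta$ has eigenvalues $\beta_{11},\beta_{22}<0$ and that the $\log$-moment condition \eqref{EQ:03} splits along the two coordinate-axis-supported summands of $\nu$ exactly as you describe.
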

It is straightforward to extend this model to higher dimensions and
more general driving noises.

\section{Stochastic equation for multi-type CBI processes}

In this section we recall some results for the particular case of
multi-type CBI processes, i.e. affine processes on state space $D=\R_{+}^{m}$
(that is, $n=0$). For further references and additional explanations
we refer to \citep{BLP15} and \citep{BPP18b}. Let $(\Omega,\F,\P)$
be a complete probability space rich enough to support the following
objects:
\begin{enumerate}
\item[(B1)] A $m$-dimensional Brownian motion $\left(W_{t}\right)_{t\ge0}:=(W_{t,1},\dots,W_{t,m})_{t\ge0}$.
\item[(B2)] A Poisson random measure $M_{I}(ds,d\xi)$ on $\R_{+}\times\R_{+}^{m}$
with compensator $\widehat{M}_{I}(ds,d\xi)=ds\nu_{I}(d\xi)$,
where $\nu_{I}$ is a Borel measure supported on $\R_{+}^{m}$ satisfying
\[
\nu_{I}(\{0\})=0,\qquad\int\limits _{\R_{+}^{m}}(1\wedge|\xi|)\nu_{I}(d\xi)<\infty.
\]
\item[(B3)] Poisson random measures $N_{1}^{I}(ds,d\xi,dr),\dots,N_{m}^{I}(ds,d\xi,dr)$
on $\R_{+}\times\R_{+}^{m}\times\R_{+}$ with compensators $\widehat{N}_{i}^{I}(ds,d\xi,dr)=ds\mu_{i}^{I}(d\xi)dr$,
$i\in I$, where $\mu_{1}^{I},\dots,\mu_{m}^{I}$ are Borel measures
supported on $\R_{+}^{m}$ satisfying
\[
\mu_{i}^{I}(\{0\})=0,\ \ \int\limits _{\R_{+}^{m}}\left(|\xi|\wedge|\xi|^{2}+\sum\limits _{j\in\{1,\dots,m\}\backslash\{i\}}\xi_{j}\right)\mu_{i}^{I}(d\xi)<\infty,\qquad i\in I.
\]
\end{enumerate}
The objects $W,M_{I},N_{1}^{I},\dots,N_{m}^{I}$ are supposed to be
mutually independent. Let $\widetilde{M}_{I}(ds,d\xi)=M_{I}(ds,d\xi)-\widehat{M}_{I}(ds,d\xi)$
and $\widetilde{N}_{i}^{I}(ds,d\xi,dr)=N_{i}^{I}(ds,d\xi,dr)-\widehat{N}_{i}^{I}(ds,d\xi,dr)$
be the corresponding compensated Poisson random measures. 
Here and below we consider the natural augmented filtration generated by $W, M_I, N_1^I, \dots, N_m^I$.
Finally let
\begin{enumerate}
\item[(a)] $b\in\R_{+}^{m}$.
\item[(b)] $\beta=(\beta_{ij})_{i,j\in I}$ such that $\beta_{ji}-\int_{\R_{+}^{m}}\xi_{j}\mu_{i}^{I}(d\xi)\geq0$,
for $i\in I$ and $j\in I\backslash\{i\}$.
\item[(c)] A matrix $\sigma(y)=\mathrm{diag}(\sqrt{2c_{1}y_{1}},\cdots,\sqrt{2c_{m}y_{m}})\in\R^{m\times m}$,
where $c_{1},\dots,c_{m}\geq0$.
\end{enumerate}
For $y\in\R_{+}^{m}$, consider the stochastic equation
\begin{align}
Y_{t} & =y+\int\limits _{0}^{t}\left(b+\widetilde{\beta}Y_{s}\right)ds+\int\limits _{0}^{t}\sigma(Y_{s})dW_{s}+\int\limits _{0}^{t}\int\limits _{\R_{+}^{m}}\xi M_{I}(ds,d\xi)\label{SDE:3}\\
 & \ \ \ +\sum\limits _{i=1}^{m}\int\limits _{0}^{t}\int\limits _{|\xi|\leq1}\int\limits _{\R_{+}}\xi\1_{\{r\leq Y_{s-,i}\}}\widetilde{N}_{i}^{I}(ds,d\xi,dr)+\sum\limits _{i=1}^{m}\int\limits _{0}^{t}\int\limits _{|\xi|>1}\int\limits _{\R_{+}}\xi\1_{\{r\leq Y_{s-,i}\}}N_{i}^{I}(ds,d\xi,dr),\nonumber
\end{align}
where $\widetilde{\beta}_{ji}=\beta_{ji}-\int_{|\xi|>1}\xi_{j}\mu_{i}^{I}(d\xi)$.
Pathwise uniqueness for a slightly more complicated equation was recently
obtained in \citep{BLP15}, while \eqref{SDE:3} in this form appeared
first in \citep{BPP18b}. The following is essentially due to \citep{BLP15}.
\begin{Proposition}\label{PROP:00}
Let $(b,\beta,\sigma)$
be as in (a) -- (c), and consider objects $W,M_{I},N_{1}^{I},\dots,N_{m}^{I}$
that are given in (B1) -- (B3).
Then the following assertions hold:
\begin{enumerate}
\item[(a)] For each $y\in\R_{+}^{m}$, there exists a pathwise unique strong
solution $Y=(Y_{t})_{t\geq0}$ to \eqref{SDE:3}.
\item[(b)] Let $Y$ be any solution to \eqref{SDE:3}. Then
$Y$ is a multi-type CBI process starting from $y$, and the generator
$L_{Y}$ of $Y$ is of the following form: for $f\in C_{c}^{2}(\R_{+}^{m}),$
\begin{align*}
(L_{Y}f)(y) & =(b+\beta y,\nabla f(y))+\sum\limits _{i=1}^{m}c_{i}y_{i}\frac{\partial^{2}f(y)}{\partial y_{i}^{2}}+\int\limits _{\R_{+}^{m}}\left(f(y+\xi)-f(y)\right)\nu_{I}(d\xi)\\
 & \ \ \ +\sum\limits _{i=1}^{m}y_{i}\int\limits _{\R_{+}^{m}}\left(f(y+\xi)-f(y)-(\xi,\nabla f(y))\right)\mu_{i}^{I}(d\xi).
\end{align*}
Conversely, given any multi-type CBI process $\widetilde{Y}$ with
generator $L_{Y}$ and starting point $y$, we can
find a solution $Y$ to \eqref{SDE:3} such that $Y$ and $\widetilde{Y}$
have the same law.
\end{enumerate}
\end{Proposition}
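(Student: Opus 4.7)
The plan is to derive both assertions from the framework developed in \citep{BLP15, BPP18b}, of which \eqref{SDE:3} is a specialization. The crux is pathwise uniqueness, which I would approach by a Yamada--Watanabe/Dawson--Li argument. Given two solutions $Y, Y'$ on the same probability space driven by the same noises and starting from the same $y$, I would apply It\^o's formula to $\phi_n(Y_{t,i} - Y'_{t,i})$ with the standard Yamada--Watanabe approximation $\phi_n \in C^2(\R)$ satisfying $\phi_n \uparrow |\cdot|$ and whose second derivative is concentrated near the origin at scale $1/n$. The Lipschitz drift from $\widetilde{\beta}$ yields a Gronwall-type term; the H\"older-$\tfrac12$ diffusion difference $\sqrt{2 c_i Y_{s,i}} - \sqrt{2 c_i Y'_{s,i}}$ produces a quadratic-variation contribution that vanishes in the limit $n\to\infty$ by the defining property of $\phi_n$; the state-dependent jump differences are controlled through the identity
\[
 \int_{\R_+} \bigl|\1_{\{r \leq y\}} - \1_{\{r \leq y'\}}\bigr|\, dr = |y - y'|,
\]
which, combined with the integrability hypothesis in (B3), converts the $\widetilde{N}_i^I$-integrand into a Lipschitz expression. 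Summing over coordinates and applying Gronwall's lemma then gives $\E|Y_t - Y'_t| = 0$.

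For strong existence in part (a), I would first establish weak existence via the well-posedness of the martingale problem for $L_Y$, which is the specialization to $D = \R_+^m$ of the affine martingale problem recalled in Remark \ref{COROLLARY:00}. An Ikeda--Watanabe/El Karoui--M\'el\'eard representation of a semimartingale with the prescribed characteristics then realizes any such weak solution as a solution of \eqref{SDE:3}, possibly after enlarging the probability space. Combining weak existence with pathwise uniqueness, the Yamada--Watanabe theorem for jump SDEs produces a strong solution on the original space. For part (b), applying It\^o's formula to $f(Y_t)$ for $f \in C_c^2(\R_+^m)$, one verifies directly that the drift and diffusion of \eqref{SDE:3} generate the first two summands of $L_Y f$, while the compensators of $M_I$ and $N_i^I$ --- together with the correction built into $\widetilde{\beta}$ accounting for the uncompensated big jumps --- produce the $\nu_I$-integral and the $\mu_i^I$-integrals weighted by $Y_{s,i}$ (the weight arising from $\int_{\R_+} \1_{\{r \leq Y_{s,i}\}}\, dr = Y_{s,i}$). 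Hence $Y$ solves the martingale problem for $L_Y$ and is thus a multi-type CBI process. The converse direction follows by applying the same representation theorem to a given CBI process.

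The main obstacle is the pathwise uniqueness argument: the H\"older-$\tfrac12$ diffusion coefficient and the state-dependent compensated jumps must be controlled simultaneously. The Yamada--Watanabe smoothing is necessary to absorb the square-root singularity but tends to inflate the jump contribution, while the indicator-difference identity displayed above exploits the specific structure of the state-dependent jump term to keep the overall bound linear in $|Y_{s,i} - Y'_{s,i}|$. Balancing these two effects is precisely the technical content of \citep{BLP15}.
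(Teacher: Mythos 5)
Your overall architecture (weak existence via the martingale problem plus a representation theorem, Yamada--Watanabe for jump SDEs to upgrade to strong solutions, It\^o's formula with $\int_{\R_+}\1_{\{r\le Y_{s,i}\}}dr=Y_{s,i}$ and the $\widetilde{\beta}$-correction to identify the generator) is sound and matches how these facts are established in the literature. Note, however, that the paper itself does not reprove Proposition \ref{PROP:00}: it imports it from \citep{BLP15} (the form \eqref{SDE:3} from \citep{BPP18b}), and there pathwise uniqueness is \emph{not} obtained by smoothing $|Y_t-Y_t'|$ directly but rests on the comparison principle recalled as Lemma \ref{COMPARISON}; with equal drifts and equal starting points the comparison applied in both directions gives indistinguishability.

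The genuine gap is in your treatment of the state-dependent jumps. The claim that the identity $\int_{\R_+}|\1_{\{r\le y\}}-\1_{\{r\le y'\}}|\,dr=|y-y'|$, ``combined with the integrability hypothesis in (B3)'', turns the $\widetilde{N}_i^I$-integrand into a Lipschitz expression is false in general: (B3) only guarantees $\int(|\xi|\wedge|\xi|^2)\mu_i^I(d\xi)<\infty$, so the own-coordinate small jumps (the $\xi_i$-component triggered by $Y_{s,i}$) may have infinite first moment near the origin --- exactly the situation of the stable-like measures $\mu_i$ with $\gamma_i\in(1,2)$ in Section 2.1 --- and the compensated jump difference is then not $L^1$-Lipschitz in $|Y_{s,i}-Y'_{s,i}|$. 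The cross-coordinate jump terms are indeed Lipschitz (admissibility gives $\int\xi_j\,\mu_i^I(d\xi)<\infty$ for $j\neq i$), but the own-coordinate part must be handled by the second-order difference estimate for the smoothing functions: the factor $|Y_{s,i}-Y'_{s,i}|$ coming from the $dr$-integration has to cancel the blow-up of $\phi_n''$ (which behaves like $(nz)^{-1}$ on the smoothing zone), leaving an error of order $\xi_i^2/n$ that vanishes as $n\to\infty$ --- a vanishing-error mechanism in the spirit of Fu--Li/Dawson--Li, not a Lipschitz one. Moreover, the sign structure needed for that cancellation, together with the off-diagonal drift condition $\beta_{ji}-\int\xi_j\mu_i^I(d\xi)\ge0$, is precisely why \citep{BLP15} organize the argument as a comparison theorem for ordered data rather than as a direct coordinate-wise bound. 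So either carry out those second-difference estimates in full, or do as the paper does: quote \citep{BLP15} for comparison and pathwise uniqueness and \citep{BLP15b} for the Yamada--Watanabe step.
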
 The proof of the pathwise uniqueness is based on
a comparison principle for multi-type CBI processes, see \citep[Lemma 4.2]{BLP15}.
This comparison principle is stated below.
\begin{Lemma}\citep[Lemma 4.2]{BLP15}\label{COMPARISON}
Let $(Y_{t})_{t\geq0}$ be a weak solution to \eqref{SDE:3} with
parameters $(b,\beta,\sigma)$, let $(Y_{t}')_{t\geq0}$ be another
weak solution to \eqref{SDE:3} with parameters $(b',\beta,\sigma)$,
where $(b,\beta,\sigma)$ and $(b',\beta,\sigma)$
satisfy (a) -- (c).
Both solutions are supposed to be defined on
the same probability space and with respect to the
same noises $W,M_{I},N_{1}^{I},\dots,N_{m}^{I}$ that satisfy (B1) -- (B3).
Suppose that, for all $j\in\{1,\dots,m\}$, $y_{j}\leq y_{j}'$
and $b_{j}\leq b_{j}'$. Then
\[
\P(Y_{j,t}\leq Y_{j,t}',\ \ \forall j\in\{1,\dots,m\},\ \forall t\geq0)=1.
\]
\end{Lemma}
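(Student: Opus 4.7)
The plan is to adapt the classical Yamada-Watanabe comparison technique to the multi-type CBI equation \eqref{SDE:3}, exploiting the fact that $Y$ and $Y'$ live on the same probability space and are driven by the same Brownian motion and Poisson random measures. I would work pathwise with the componentwise difference $D_{j}(t):=Y_{j,t}-Y_{j,t}'$, localize by $\tau_{N}=\inf\{t:|Y_{t}|+|Y_{t}'|>N\}$, apply It\^o's formula to $\phi_{n}(D_{j}(t\wedge \tau_{N}))$ and sum over $j\in\{1,\dots,m\}$. Here $(\phi_{n})_{n\geq 1}$ is the usual Yamada-Watanabe mollification of $x\mapsto x^{+}$: non-negative, non-decreasing, convex, vanishing on $(-\infty,0]$, with $\phi_{n}(x)\uparrow x^{+}$ and $x\phi_{n}''(x)\leq 2/n$. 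The target is the Gronwall-type inequality
\[
\E\sum_{j}D_{j}(t\wedge \tau_{N})^{+}\leq C\int_{0}^{t}\E\sum_{j}D_{j}(s\wedge \tau_{N})^{+}\,ds,
\]
which together with $D_{j}(0)\leq 0$ and right-continuity of paths yields the simultaneous-in-$t$ assertion after sending $N\to\infty$.

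\textbf{Continuous part.} The It\^o correction from the Brownian piece is $\tfrac{1}{2}\phi_{n}''(D_{j})|\sqrt{2c_{j}Y_{j,s}}-\sqrt{2c_{j}Y_{j,s}'}|^{2}$; by $(\sqrt{a}-\sqrt{b})^{2}\leq|a-b|$ it is bounded on $\{D_{j}>0\}$ by $c_{j}D_{j}\phi_{n}''(D_{j})\leq 2c_{j}/n$, which vanishes as $n\to\infty$ --- this is the point of the Yamada-Watanabe choice of $\phi_{n}$. The drift term contributes $\sum_{j}\phi_{n}'(D_{j})\bigl[(b_{j}-b_{j}')+\sum_{i}\widetilde{\beta}_{ji}D_{i}\bigr]$; the $b$-piece is non-positive by hypothesis, and rearranging the second summand as $\sum_{i}D_{i}\sum_{j}\phi_{n}'(D_{j})\widetilde{\beta}_{ji}$ one exploits quasi-monotonicity: for $j\neq i$ one has $\widetilde{\beta}_{ji}\geq 0$, which follows from condition (iv) of Definition \ref{defi: adm. parameter} combined with $\widetilde{\beta}_{ji}=\beta_{ji}-\int_{|\xi|>1}\xi_{j}\mu_{i}^{I}(d\xi)$ and the residual small-jump mass $\int_{|\xi|\leq 1}\xi_{j}\mu_{i}^{I}(d\xi)\geq 0$. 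On $\{D_{i}\leq 0\}$ the diagonal term $\phi_{n}'(D_{i})\widetilde{\beta}_{ii}D_{i}$ vanishes since $\phi_{n}'(D_{i})=0$ and the off-diagonal terms are non-positive; on $\{D_{i}>0\}$ the whole sum is bounded by $KD_{i}^{+}$ with $K:=\max_{i}(|\widetilde{\beta}_{ii}|+\sum_{j\neq i}\widetilde{\beta}_{ji})$.

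\textbf{Jump terms and main obstacle.} The noise $M_{I}$ is common to both equations and cancels in $D_{j}$. For each branching noise $N_{i}^{I}$ the jump increment is $\Delta D_{j}=\xi_{j}\bigl(\1_{\{r\leq Y_{s-,i}\}}-\1_{\{r\leq Y_{s-,i}'\}}\bigr)$. On $\{D_{i}(s-)\leq 0\}$ the first indicator is dominated by the second, so $\Delta D_{j}\leq 0$ for every $\xi\in\R_{+}^{m}$, and monotonicity of $\phi_{n}$ forces $\phi_{n}(D_{j}(s-)+\Delta)\leq\phi_{n}(D_{j}(s-))$ --- a harmless contribution. On $\{D_{i}(s-)>0\}$ the compensator produces a big-jump drift of order $D_{i}(s-)\int_{|\xi|>1}\xi_{j}\mu_{i}^{I}(d\xi)$, which is finite by admissibility and exactly balances the adjustment already absorbed into $\widetilde{\beta}$; the compensated small-jump contribution is controlled via the Taylor remainder $\phi_{n}(D_{j}+\Delta)-\phi_{n}(D_{j})-\phi_{n}'(D_{j})\Delta$ together with $\phi_{n}''$ and the $|\xi|\wedge|\xi|^{2}$-integrability of $\mu_{i}^{I}$. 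The main obstacle is precisely this jump bookkeeping: one must verify that, after merging the small-jump compensator corrections with the drift adjustment $\widetilde{\beta}$ and integrating the $r$-variable explicitly on $[0,Y_{s-,i}\vee Y_{s-,i}']$, every residual term is either non-positive on $\{D_{i}\leq 0\}$ or bounded by a constant multiple of $\sum_{i}D_{i}^{+}$, so that Gronwall closes the loop. The H\"older square-root diffusion coefficient is a secondary difficulty, handled automatically by the Yamada-Watanabe function.
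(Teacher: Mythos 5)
A preliminary remark on the comparison itself: the paper does not prove this lemma — it is imported verbatim from \cite[Lemma 4.2]{BLP15} — so there is no internal proof to measure your sketch against; your Yamada--Watanabe strategy (mollify $x^{+}$, apply It\^o to the componentwise differences $D_{j}=Y_{j}-Y_{j}'$ driven by the common noises, localize, Gronwall) is indeed the style of argument used in that reference, and your treatment of the diffusion part, of the common state-independent noise $M_{I}$, and of the uncompensated big-jump integrals is sound.

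There is, however, a genuine gap in the jump bookkeeping, precisely at the step where the multi-type structure enters. Consider the compensated small-jump integral against $N_{i}^{I}$ on $\{D_{i}(s-)\leq 0\}$. Your claim that this is ``harmless by monotonicity of $\phi_{n}$'' accounts only for the jumps themselves; after taking expectations the surviving term is the compensator integral of the convex remainder $\phi_{n}(D_{j}-\xi_{j})-\phi_{n}(D_{j})+\phi_{n}'(D_{j})\xi_{j}\geq 0$, and integrating out $r$ multiplies it by $D_{i}^{-}=(Y_{i}'-Y_{i})^{+}$. This quantity is not bounded by $C\sum_{l}D_{l}^{+}$, and it does not vanish as $n\to\infty$ (its limit is $\1_{\{D_{j}>0\}}(\xi_{j}-D_{j})^{+}$), so with your bookkeeping the Gronwall inequality cannot close. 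The only way out is to net it against the off-diagonal drift term $\phi_{n}'(D_{j})\widetilde{\beta}_{ji}D_{i}=-\phi_{n}'(D_{j})\widetilde{\beta}_{ji}D_{i}^{-}$, which you have already discarded as ``non-positive'': bounding the remainder by $\phi_{n}'(D_{j})\xi_{j}$, the sum is $\phi_{n}'(D_{j})D_{i}^{-}\bigl(\int_{\R_{+}^{m}}\xi_{j}\mu_{i}^{I}(d\xi)-\beta_{ji}\bigr)\leq 0$, i.e. one needs the full quasi-monotonicity condition (b), $\beta_{ji}\geq\int_{\R_{+}^{m}}\xi_{j}\mu_{i}^{I}(d\xi)$ for $j\neq i$, not merely $\widetilde{\beta}_{ji}\geq 0$. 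This is not cosmetic: when $Y_{i}<Y_{i}'$ the compensator of the small jumps pushes $Y_{j}-Y_{j}'$ upward at rate $D_{i}^{-}\int_{|\xi|\leq 1}\xi_{j}\mu_{i}^{I}(d\xi)$, and only (b) guarantees the linear drift absorbs it. A second, smaller inaccuracy: on $\{D_{i}>0\}$ the off-diagonal ($j\neq i$) small-jump remainder is not controlled by ``$\phi_{n}''$ and the $|\xi|\wedge|\xi|^{2}$-integrability'' — the factor $D_{i}$ produced by the $r$-integration does not cancel $\phi_{n}''$ evaluated near $D_{j}$, and $\sup\phi_{n}''$ blows up with $n$. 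For $j\neq i$ one must instead use the uniform bound (remainder $\leq\xi_{j}$) together with $\int_{\R_{+}^{m}}\xi_{j}\mu_{i}^{I}(d\xi)<\infty$ for $j\neq i$, which is exactly the extra first-moment requirement in (B3); your $\phi_{n}''$-cancellation works only for the diagonal term $j=i$. With these two corrections — keeping condition (b) visibly in the estimate rather than only $\widetilde{\beta}_{ji}\geq0$ — your outline does become a proof along the lines of the one in \cite{BLP15}.
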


\section{Stochastic equation for affine processes}

Below we show that any affine process can also be obtained as the
pathwise unique strong solution to a certain stochastic equation.
In the two-dimensinoal case
$D = \R_+ \times \R$ such a result was first obtained in \cite{DL06}.
Indepedently, the case of affine diffusions on the canoncical state space
$D = \R_+^m \times \R^n$ (i.e., processes without jumps) was studied in \cite{FM09}. The main obstacle there is related with the diffusion component which is degenerate at the boundary but also has a nontrival structure in higher dimensions. In order to take this into account we use, compared to \cite{FM09}, another representation of the diffusion matrix (see (A0) and (A1) below). Such a representation is used to decompose the corresponding affine process into a CBI and an OU component which are then treated seperately. Consequently, based on the avaliable results for CBI processes, the proofs in this section become relatively simple.

Let $(a,\alpha,b,\beta,\nu,\mu)$ be admissible parameters. For
the parameters $a$ and $\alpha=(\alpha_{1},\dots,\alpha_{m})$ consider
the following objects:
\begin{enumerate}
\item[(A0)] An $n\times n$-matrix $\sigma_{a}$ such that $\sigma_{a}\sigma_{a}^{\top}=a_{JJ}$.
\item[(A1)] Matrices $\sigma_{1},\dots,\sigma_{m}\in\R^{d\times d}$ such that,
for all $j\in I$, $\sigma_{j}\sigma_{j}^{\top}=\alpha_{j}$
and
\begin{align}
\sigma_{j}=\begin{pmatrix}\sigma_{j,II} & 0\\
\sigma_{j,JI} & \sigma_{j,JJ}
\end{pmatrix},\qquad(\sigma_{j,II})_{kl}=\delta_{kj}\delta_{lj}\alpha_{j,jj}^{1/2}.\label{EQ:00}
\end{align}
\end{enumerate}
Let us remark the following.
\begin{Remark}
\begin{enumerate}
\item[(i)] The first condition is simple to check. Indeed, by definition, one
has $a=\begin{pmatrix}0 & 0\\
0 & a_{JJ}
\end{pmatrix}\in S_{d}^{+}$, thus $a_{JJ}$ is symmetric and positive semidefinite. Hence $\sigma_{a}$
denotes the non-negative square root of $a_{JJ}$.
\item[(ii)] Concerning the second condition, recall that $\alpha_{j}\in S_{d}^{+}$
and hence $\alpha_{j,II}$ is positive semidefinite.
Moreover, by definition of admissible parameters, $\alpha_{j,II}$
is everywhere zero except at the entry $(j,j)$. Hence $\alpha_{j,jj}^{1/2}$
is well-defined. Existence of $\sigma_{j}$ satisfying \eqref{EQ:00}
follows from the characterization of positive semidefiniteness for
symmetric block matrices, see, e.g., \citep[Theorem 16.1]{G11}. The
latter result is based on pseudo-inverses and properties of the Schur-complement
for block matrices.
\end{enumerate}
\end{Remark}
Below we describe the noises appearing
in the stochastic equation for affine processes. Let $(\Omega,\F,\P)$
be a complete probability space rich enough to support the following
objects:
\begin{enumerate}
\item[(A2)] A $n$-dimensional Brownian motion $B=(B_{t})_{t\geq0}$.
\item[(A3)] For each $i\in I$, a $d$-dimensional Brownian motion $W^{i}=(W_{t}^{i})_{t\geq0}$.
\item[(A4)] A Poisson random measure $M(ds,d\xi)$ with compensator $\widehat{M}(ds,d\xi)=ds\nu(d\xi)$
on $\R_{+}\times D$.
\item[(A5)] For each $i\in I$, a Poisson random measure $N_{i}(ds,d\xi,dr)$
with compensator $\widehat{N}_{i}(ds,d\xi,dr)=ds\mu_{i}(d\xi)dr$
on $\R_{+}\times D\times\R_{+}$.
\end{enumerate}
We suppose that all objects $B,W^{1},\dots,W^{m},M,N_{1},\dots,N_{m}$
are mutually independent. Denote by $\widetilde{M}(ds,d\xi)=M(ds,d\xi)-\widehat{M}(ds,d\xi)$
and $\widetilde{N}_{i}(ds,d\xi,dr)=N_{i}(ds,d\xi,dr)-\widehat{N}_{i}(ds,d\xi,dr)$,
$i\in I$, the corresponding compensated Poisson random measures. Here and below we consider the natural
augmented filtration generated by these noise terms.
For $x\in D$, consider the stochastic equation
\begin{align}
X_{t} & =x+\int\limits _{0}^{t}\left(\widetilde{b}+\widetilde{\beta}X_{s}\right)ds+\sqrt{2}\begin{pmatrix}0\\
\sigma_{a}B_{t}
\end{pmatrix}+\sum\limits _{i\in I}\int\limits _{0}^{t}\sqrt{2X_{s,i}}\sigma_{i}dW_{s}^{i}\label{SDE}\\
 & \ \ \ +\int\limits _{0}^{t}\int\limits _{|\xi|\leq1}\xi\widetilde{M}(ds,d\xi)+\int\limits _{0}^{t}\int\limits _{|\xi|>1}\xi M(ds,d\xi)\nonumber \\
 & \ \ \ +\sum\limits _{i\in I}\int\limits _{0}^{t}\int\limits _{|\xi|\leq1}\int\limits _{\R_{+}}\xi\1_{\{r\leq X_{s-,i}\}}\widetilde{N}_{i}(ds,d\xi,dr)+\sum\limits _{i\in I}\int\limits _{0}^{t}\int\limits _{|\xi|>1}\int\limits _{\R_{+}}\xi\1_{\{r\leq X_{s-,i}\}}N_{i}(ds,d\xi,dr),\nonumber
\end{align}
where $\widetilde{b}$ and $\widetilde{\beta}=(\widetilde{b}_{ki})_{k,i\in\{1,\dots,d\}}$
are, for $i,k\in\{1,\dots,d\}$, given by
\begin{align}
\widetilde{b}_{i}=b_{i}+\1_{I}(i)\int\limits _{|\xi|\leq1}\xi_{i}\nu(d\xi),\qquad\widetilde{\beta}_{ki}=\beta_{ki}-\1_{I}(i)\int\limits _{|\xi|>1}\xi_{k}\mu_{i}(d\xi).\label{COMP:00}
\end{align}
Note that we have changed the drift coefficients to $\widetilde{b}$
and $\widetilde{\beta}$ in order to change the compensators in the
stochastic integrals. Such change is, under the given moment conditions
on $\mu=(\mu_{1},\dots,\mu_{m})$, always possible and does not affect
our results. Concerning existence and uniqueness for \eqref{SDE},
we obtain the following.
\begin{Theorem}\label{THEOREM:00} Let $(a,\alpha,b,\beta,\nu,\mu)$
be admissible parameters. Then, for each $x\in D$, there exists a
pathwise unique $D$-valued strong solution $X=(X_{t})_{t\geq0}$
to \eqref{SDE}.
\end{Theorem}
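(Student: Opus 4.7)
The plan is to exploit the block structure that admissibility forces on the coefficients of \eqref{SDE} in order to split it into two coupled but separately solvable pieces. Write $X=(Y,Z)$ with $Y=(X_{1},\dots,X_{m})\in\R_{+}^{m}$ and $Z=(X_{m+1},\dots,X_{d})\in\R^{n}$. Because $a_{II}=a_{IJ}=0$, $\beta_{IJ}=0$, and the upper-right block of each $\sigma_{i}$ vanishes in \eqref{EQ:00}, the first $m$ rows of \eqref{SDE} do not involve $Z$ at all and form a closed stochastic equation for $Y$. This equation is exactly of the type \eqref{SDE:3} with $c_{i}=\alpha_{i,ii}$, driving $m$-dimensional Brownian motion $(W_{\cdot,1}^{1},\dots,W_{\cdot,m}^{m})$, and Poisson random measures $M_{I}$, $N_{i}^{I}$ obtained as the images of $M$ and $N_{i}$ under the projection $D\to\R_{+}^{m}$, $\xi\mapsto\xi_{I}$. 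Proposition \ref{PROP:00} then yields existence and pathwise uniqueness of an $\R_{+}^{m}$-valued strong solution $Y$ starting from $x_{I}$.

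Once $Y$ has been constructed, the last $n$ rows of \eqref{SDE} take the form of an inhomogeneous affine-linear SDE
\[
  dZ_{t}=(\widetilde b_{J}+\widetilde\beta_{JI}Y_{t}+\widetilde\beta_{JJ}Z_{t})dt+dU_{t},
\]
where $U$ collects the $J$-components of all Brownian and Poisson integrals in \eqref{SDE}. Crucially, $U$ depends on $Y$ only through the multiplicative factors $\sqrt{2Y_{s,i}}$ in front of the lower blocks of $\sigma_{i}$ and through the indicators $\1_{\{r\leq Y_{s-,i}\}}$ in the state-dependent jump integrals, and it is independent of $Z$. Since the $Z$-dependence sits exclusively in a linear drift with the constant matrix $\widetilde\beta_{JJ}$, I would solve this equation by the Duhamel formula
\[
  Z_{t}=e^{t\widetilde\beta_{JJ}}z+\int_{0}^{t}e^{(t-s)\widetilde\beta_{JJ}}(\widetilde b_{J}+\widetilde\beta_{JI}Y_{s})ds+\int_{0}^{t}e^{(t-s)\widetilde\beta_{JJ}}dU_{s},
\]
which gives a pathwise unique strong solution and shows that $Z$ is a deterministic functional of $z$, $Y$ and the noises.

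For uniqueness of the full equation the argument runs in the same order: any strong solution $X'=(Y',Z')$ of \eqref{SDE} has its $I$-projection solving \eqref{SDE:3}, hence $Y'=Y$ by Proposition \ref{PROP:00}, and then $Z'=Z$ by uniqueness of the linear equation above. To identify the constructed $X=(Y,Z)$ with the affine process of Theorem \ref{EXISTENCE:AFFINE}, I would apply It\^o's formula to $f(X_{t})$ for $f\in C_{c}^{2}(D)$: the corrections in \eqref{COMP:00} precisely absorb the compensators of the small state-independent jumps and of the large state-dependent jumps into the drift, so that $f(X_{t})-f(X_{0})-\int_{0}^{t}(Lf)(X_{s})ds$ is a martingale. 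Well-posedness of the martingale problem recorded in Remark \ref{COROLLARY:00} then identifies $X$ with the unique affine process with the given admissible parameters.

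The main obstacle I expect is the bookkeeping in the first step: one must verify that the $\xi\mapsto\xi_{I}$-images of $\nu$ and $\mu_{i}$ inherit the integrability conditions (B2) and (B3) required by Proposition \ref{PROP:00}, and that the drift correction $(\widetilde b,\widetilde\beta)$ in \eqref{COMP:00} is exactly the one implicit in \eqref{SDE:3} with respect to these projections. Both facts follow from admissibility conditions (v) and (vi) of Definition \ref{defi: adm. parameter}; for instance, $\int_{\R_{+}^{m}}(1\wedge|\xi_{I}|)\nu(d\xi)\leq\int_{D}\sum_{i\in I}(1\wedge\xi_{i})\nu(d\xi)<\infty$ handles the state-independent small jumps, while the finiteness of $\int_{|\xi|>1}|\xi|\mu_{i}(d\xi)$ underlies the compensation of the large state-dependent jumps. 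Once this matching is in place, both construction steps are routine and the martingale-problem identification reduces to a direct application of It\^o's formula.
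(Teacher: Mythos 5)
Your proposal is correct, and the uniqueness half coincides with the paper's argument: the paper also splits $X=(Y,Z)$, checks that the $I$-block of \eqref{SDE} (i.e.\ \eqref{SDE:1}) is exactly \eqref{SDE:3} for the projected noises $M_I=M\circ(\mathrm{id}\times\mathrm{pr}_I)^{-1}$, $N_i^I$, invokes Proposition \ref{PROP:00} to get $Y=Y'$, and then observes that the difference of two $Z$-solutions satisfies the deterministic linear equation $Z_{t}-Z_{t}'=\int_0^t\widetilde\beta_{JJ}(Z_s-Z_s')\,ds$, killed by Gronwall. Where you genuinely diverge is the existence half. The paper does not construct the solution: it proves weak existence abstractly (Lemma \ref{WEAK:EXISTENCE}), using the well-posed martingale problem of Remark \ref{COROLLARY:00} together with Kurtz's equivalence of martingale problems and weak solutions (or, alternatively, the semimartingale-characteristics route of Jacod--Shiryaev), and then upgrades weak existence plus pathwise uniqueness to strong existence via the Yamada--Watanabe theorem for SDEs with jumps. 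You instead build the strong solution directly: $Y$ comes from Proposition \ref{PROP:00}(a) (which already delivers a strong solution, adapted to the sub-filtration of the projected noises and hence to the original one), and $Z$ is written down by variation of constants with the additive semimartingale noise $U$ that depends on $Y$ but not on $Z$. Your route is more constructive and avoids the Yamada--Watanabe machinery entirely; its price is that you must actually verify that the Duhamel expression solves \eqref{SDE:2} (apply It\^o to $e^{-t\widetilde\beta_{JJ}}Z_t$ or a stochastic Fubini argument, routine since the integrands $\sqrt{2Y_{s,i}}$ are c\`adl\`ag adapted and the jump integrands have the required integrability), together with the bookkeeping you already flag, namely that $\nu_I$, $\mu_i^I$ inherit (B2)--(B3) from Definition \ref{defi: adm. parameter}(v)--(vi) and that the drift corrections \eqref{COMP:00} turn the $I$-block into \eqref{SDE:3} on the nose. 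The paper's route trades this verification for reliance on the known well-posedness of the martingale problem from \citep{DFS03}. Your final identification step (It\^o plus Remark \ref{COROLLARY:00}) is not needed for the statement of the theorem itself; it reproduces the paper's separate Proposition 4.3.
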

This result will be proved later in
this Section. Let us first relate \eqref{SDE} to affine processes.
\begin{Proposition}
Let $(a,\alpha,b,\beta,\nu,\mu)$
be admissible parameters. Then each solution $X$ to \eqref{SDE}
is an affine process with admissible parameters $(a,\alpha,b,\beta,\nu,\mu)$
and starting point $x$.
\end{Proposition}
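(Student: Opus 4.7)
The plan is to apply It\^o's formula to $f(X_t)$ for $f \in C_c^\infty(D)$, show that $X$ solves the martingale problem for $(L, C_c^\infty(D))$ with $L$ the generator of Theorem \ref{EXISTENCE:AFFINE}, and then invoke the well-posedness asserted in Remark \ref{COROLLARY:00} to conclude that the law of $X$ coincides with that of the affine process with admissible parameters $(a,\alpha,b,\beta,\nu,\mu)$ and starting point $x$. Since $f$ is compactly supported, the stochastic integrals against $B,W^{i},\widetilde{M},\widetilde{N}_{i}$ produced by It\^o's formula are true martingales, so it suffices to identify the finite-variation part of $f(X_t)-f(x)$ with $\int_{0}^{t}(Lf)(X_s)\,ds$.

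First I would collect the four contributions to the finite variation. The linear drift yields $\langle \widetilde b + \widetilde\beta X_s, \nabla f(X_s)\rangle$. The diffusion terms $\sqrt{2}\sigma_a\,dB_s$ and $\sqrt{2X_{s,i}}\sigma_i\,dW^{i}_s$ produce, via $\sigma_a\sigma_a^{\top}=a_{JJ}$ and $\sigma_i\sigma_i^{\top}=\alpha_i$ combined with the vanishing blocks required by Definition \ref{defi: adm. parameter}(i),(ii) and (A0),(A1), the second-order contribution $\sum_{k,l=1}^{d}\bigl(a_{kl}+\sum_{i\in I}\alpha_{i,kl}X_{s,i}\bigr)\partial_k\partial_l f(X_s)$ (the factor $\sqrt{2}$ absorbs the usual $1/2$ of It\^o's formula). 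The state-independent jumps (small jumps compensated, big jumps not) give $\int_{D}\bigl(f(X_{s-}+\xi)-f(X_{s-})-\langle\xi,\nabla f(X_{s-})\rangle\1_{\{|\xi|\le 1\}}\bigr)\nu(d\xi)$. For each $i\in I$, Fubini applied to the indicator $\1_{\{r\le X_{s-,i}\}}$ converts the reference intensity $\mu_i(d\xi)\,dr$ into the effective intensity $X_{s-,i}\mu_i(d\xi)$, yielding $X_{s-,i}\int_{D}\bigl(f(X_{s-}+\xi)-f(X_{s-})-\langle\xi,\nabla f(X_{s-})\rangle\1_{\{|\xi|\le 1\}}\bigr)\mu_i(d\xi)$.

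The next step is to reconcile these It\^o-compensation conventions with those appearing in $L$. The generator $L$ compensates the $\nu$-integral only in the $J$-coordinates (with $\langle\xi_J,\nabla_J f\rangle$); the surplus $\int_{|\xi|\le 1}\langle\xi_I,\nabla_I f(X_{s-})\rangle\nu(d\xi)$ is exactly absorbed by the shift $\widetilde b_i-b_i=\int_{|\xi|\le 1}\xi_i\nu(d\xi)$ for $i\in I$, prescribed by \eqref{COMP:00}. Conversely, $L$ compensates the $\mu_i$-integral on the full state space, while It\^o only compensates small jumps; the missing linear term $-X_{s-,i}\int_{|\xi|>1}\langle\xi,\nabla f(X_{s-})\rangle\mu_i(d\xi)$ is absorbed by the shift $\widetilde\beta_{ki}-\beta_{ki}=-\int_{|\xi|>1}\xi_k\mu_i(d\xi)$, again from \eqref{COMP:00}. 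After these cancellations the finite-variation part of $f(X_t)-f(x)$ coincides with $\int_{0}^{t}(Lf)(X_s)\,ds$, so $X$ solves the martingale problem for $(L, C_c^\infty(D))$ with initial condition $\delta_x$. Well-posedness (Remark \ref{COROLLARY:00}) then identifies $X$ with the affine process of parameters $(a,\alpha,b,\beta,\nu,\mu)$ and starting point $x$.

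The main obstacle is pure bookkeeping: one must track three distinct compensation conventions (It\^o's, $L$'s $J$-truncation for $\nu$, and $L$'s full compensation for $\mu_i$) and verify that the drift shifts \eqref{COMP:00} account exactly for the differences. The degenerate diffusion coefficient $\sqrt{2X_{s,i}}$ poses no real difficulty since $X$ remains in $D$ by Theorem \ref{THEOREM:00} and the coefficient is continuous on $\R_+$.
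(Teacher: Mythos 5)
Your proposal is correct and follows essentially the same route as the paper: apply It\^o's formula to $f(X_t)$, identify the finite-variation part with $\int_0^t (Lf)(X_s)\,ds$ via the drift shifts \eqref{COMP:00}, and conclude from the well-posedness of the martingale problem stated in Remark \ref{COROLLARY:00}. The only (minor) difference is how the true-martingale property is certified: the paper bounds the whole process, $|M_f(t)|\leq 2\|f\|_{\infty}+t\|Lf\|_{\infty}$, using that $Lf$ is bounded, whereas you argue term by term from the compact support of $f$ --- which also works, but for the $\widetilde{N}_i$-integrals it implicitly uses that $\xi_i\geq 0$ forces the integrand to vanish unless $X_{s-,i}$ stays below the size of $\mathrm{supp}\,f$, thereby capping the effective intensity $X_{s-,i}\,\mu_i(d\xi)$.
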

\begin{proof} Let
$X$ be a solution to \eqref{SDE} and $f\in C_{c}^{2}(D)$. Applying
the Itô formula shows that
\[
M_{f}(t):=f(X_{t})-f(x)-\int\limits _{0}^{t}(Lf)(X_{s})ds,\ \ t\geq0
\]
is a local martingale. Note that $Lf$ is bounded.
Hence
\[
\E(\sup\limits _{s\in[0,t]}|M_{f}(t)|)\leq2\|f\|_{\infty}+\int\limits _{0}^{t}\E(|Lf(X_{s})|)ds\leq2\|f\|_{\infty}+t\|Lf\|_{\infty}<\infty,\qquad t\geq0,
\]
and we conclude that $(M_{f}(t))_{t\geq0}$ is a true martingale.
It follows from Remark \ref{COROLLARY:00} that $X$
is an affine process with admissible parameters $(a,\alpha,b,\beta,\nu,\mu)$.
 \end{proof}
 The rest of this section is devoted to the proof of
Theorem \ref{THEOREM:00}. As often in the theory of stochastic equations,
existence of weak solutions is the easy part.
\begin{Lemma}\label{WEAK:EXISTENCE}
Let $(a,\alpha,b,\beta,\nu,\mu)$ be admissible parameters. Then, for
each $x\in D$, there exists a weak solution $X$ to \eqref{SDE}.
\end{Lemma}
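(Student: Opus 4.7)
My plan is to take the affine process $X$ guaranteed by Theorem \ref{EXISTENCE:AFFINE} (as the unique solution to the well-posed martingale problem, cf.\ Remark \ref{COROLLARY:00}), identify its semimartingale characteristics directly from the generator $L$, and then recover the driving Brownian motions and Poisson random measures appearing in \eqref{SDE} via standard martingale representation theorems, possibly after enlarging the probability space.

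First I would apply It\^o's formula with suitably truncated coordinate functions $x_k$ to obtain the canonical semimartingale decomposition of $X$. This yields a continuous local martingale part $X^c$ with bracket
\[
\langle X^{c,k}, X^{c,l}\rangle_t \;=\; 2\int_0^t \Bigl(a_{kl} + \sum_{i\in I}\alpha_{i,kl}X_{s,i}\Bigr)ds,
\]
and an integer-valued jump measure $\mu^X(ds,d\xi)$ whose predictable compensator is $\nu(d\xi)ds + \sum_{i\in I}X_{s-,i}\mu_{i}(d\xi)ds$; the finite-variation part takes the form $\int_0^t(\widetilde{b}+\widetilde{\beta}X_s)ds$ once the big/small jumps are compensated as in \eqref{COMP:00}.

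Next I would reconstruct the driving Brownian motions. Since $a_{II}=a_{IJ}=0$ and $\alpha_{j,II}$ vanishes outside its $(j,j)$-entry, the block decomposition \eqref{EQ:00} of $\sigma_j$ combined with the classical theorem on the representation of continuous local martingales as stochastic integrals against Brownian motions (applied after adjoining auxiliary independent Brownian motions on an enlargement of $(\Omega,\F,\P)$ to absorb the degeneracies on $\{X_{s,i}=0\}$) produces independent Brownian motions $B$ and $W^1,\dots,W^m$ satisfying (A2)--(A3) which realize the diffusion part of \eqref{SDE}.

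For the jump part I would split $\mu^X$ into the state-independent piece and the $m$ state-dependent pieces. The state-independent piece, with compensator $ds\,\nu(d\xi)$, produces the Poisson random measure $M$ of (A4) by the standard Grigelionis-type representation of random measures with absolutely continuous compensator. For each state-dependent piece with compensator $X_{s-,i}\mu_{i}(d\xi)ds$, I would use the classical thinning / auxiliary-variable construction: on a further enlargement of the probability space, introduce a Poisson random measure $N_i$ on $\R_+\times D\times\R_+$ with intensity $ds\,\mu_{i}(d\xi)\,dr$ whose restriction to $\{r\leq X_{s-,i}\}$ reproduces the corresponding jumps of $X$. Arranging all of $B,W^1,\dots,W^m,M,N_1,\dots,N_m$ to be mutually independent and substituting the resulting representations back into the semimartingale decomposition of $X$ yields \eqref{SDE}. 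The main technical obstacle is precisely the construction of the $N_i$: the auxiliary coordinate $r\in\R_+$ and the modulating indicator $\1_{\{r\leq X_{s-,i}\}}$ are what force one to enlarge the probability space and to invoke the representation theory for integer-valued random measures with state-dependent intensity, whereas the diffusion step and the state-independent jump step are routine given Theorem \ref{EXISTENCE:AFFINE}.
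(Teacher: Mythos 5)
Your proposal is correct and follows essentially the same route as the paper's own proof: the paper deduces weak existence from the solvability of the martingale problem, either by citing Kurtz's equivalence of stochastic equations and martingale problems \citep{K11}, or---exactly as you do---by identifying the semimartingale characteristics of the affine process (cf.\ \citep[Theorem 2.12]{DFS03}) and then invoking the equivalence between semimartingales with those characteristics and weak solutions of the SDE \citep[Chapter III, Theorem 2.26]{JS03}. Your reconstruction of the driving Brownian motions and the thinned Poisson random measures on an enlarged probability space is precisely the content of that cited equivalence, so your argument is a fleshed-out version of the paper's second route rather than a different proof.
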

\begin{proof}
Since existence of a solution to the martingale
problem with sample paths in the Skorokhod space over $D$ is known,
the assertion is a consequence of \citep{K11}, namely, the equivalence
between weak solutions to stochastic equations and martingale problems.
Alternatively, following \citep[p.993]{DFS03} we can show that each
solution to the martingale problem with generator $L$ and domain
$C_{c}^{2}(D)$ is a semimartingale and compute its
semimartingale characteristics (see \citep[Theorem 2.12]{DFS03}).
The assertion is then a consequence of the equivalence between weak
solutions to stochastic equations and semimartingales (see \citep[Chapter III, Theorem 2.26]{JS03}).
\end{proof} In view of the Yamada-Watanabe Theorem (see \citep{BLP15b}),
Theorem \ref{THEOREM:00} is proved, provided we can show pathwise
uniqueness for \eqref{SDE}. For this purpose we rewrite \eqref{SDE}
into its components $X=(Y,Z)$, where $Y\in\R_{+}^{m}$ and $Z\in\R^{n}$.
Introduce the notation $\xi=(\xi_{I},\xi_{J})\in D$, where $\xi_{I}=(\xi_{i})_{i\in I}$
and $\xi_{J}=(\xi_{j})_{j\in J}$. Moreover, let $W_{s}^{i}=(W_{s,I}^{i},W_{s,J}^{i})$
and write for the initial condition $x=(y,z)\in D$. Finally, let
$e_{1},\dots,e_{d}$ denote the canonical basis vectors in $\R^{d}$.
Then \eqref{SDE} is equivalent to the system of equations
\begin{align}
Y_{t} & =y+\int\limits _{0}^{t}\left(b_{I}+\widetilde{\beta}_{II}Y_{s}\right)ds+\sum\limits _{i\in I}e_{i}\int\limits _{0}^{t}\sqrt{2\alpha_{i,ii}Y_{s,i}}dW_{s,i}^{i}+\int\limits _{0}^{t}\int\limits _{D}\xi_{I}M(ds,d\xi)\label{SDE:1}\\
 & \ \ \ +\sum\limits _{i\in I}\int\limits _{0}^{t}\int\limits _{|\xi|\leq1}\int\limits _{\R_{+}}\xi_{I}\1_{\{r\leq Y_{s-,i}\}}\widetilde{N}_{i}(ds,d\xi,dr)+\sum\limits _{i\in I}\int\limits _{0}^{t}\int\limits _{|\xi|>1}\int\limits _{\R_{+}}\xi_{I}\1_{\{r\leq Y_{s-,i}\}}N_{i}(ds,d\xi,dr),\nonumber \\
Z_{t} & =z+\int\limits _{0}^{t}\left(b_{J}+\widetilde{\beta}_{JI}Y_{s}+\widetilde{\beta}_{JJ}Z_{s}\right)ds+\sqrt{2}\sigma_{a}B_{t}+\sum\limits _{i\in I}\int\limits _{0}^{t}\sqrt{2Y_{s,i}}\left(\sigma_{i,JI}dW_{s,I}^{i}+\sigma_{i,JJ}dW_{s,J}^{i}\right)\label{SDE:2}\\
 & \ \ \ +\int\limits _{0}^{t}\int\limits _{|\xi|\leq1}\xi_{J}\widetilde{M}(ds,d\xi)+\int\limits _{0}^{t}\int\limits _{|\xi|>1}\xi_{J}M(ds,d\xi)\nonumber \\
 & \ \ \ +\sum\limits _{i\in I}\int\limits _{0}^{t}\int\limits _{|\xi|\leq1}\int\limits _{\R_{+}}\xi_{J}\1_{\{r\leq Y_{s-,i}\}}\widetilde{N}_{i}(ds,d\xi,dr)+\sum\limits _{i\in I}\int\limits _{0}^{t}\int\limits _{|\xi|>1}\int\limits _{\R_{+}}\xi_{J}\1_{\{r\leq Y_{s-,i}\}}N_{i}(ds,d\xi,dr).\nonumber
\end{align}
Observe that the first equation for $Y$ does not involve $Z$. We
will show that \eqref{SDE:1} is precisely \eqref{SDE:3}, i.e., $Y$
is a multi-type CBI process and pathwise uniqueness holds for $Y$.
The second equation for $Z$ describes an OU-type process with random
coefficients depending on $Y$. If we regard $Y$ as conditionally fixed, then pathwise uniqueness for \eqref{SDE:2} is obvious. These
ideas are summarized in the next lemma.
\begin{Lemma}
Let $(a,\alpha,b,\beta,\nu,\mu)$
be admissible parameters. Then pathwise uniqueness holds for \eqref{SDE:1}
and \eqref{SDE:2}, and hence for \eqref{SDE}.
\end{Lemma}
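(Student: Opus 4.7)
The first equation \eqref{SDE:1} for $Y$ is autonomous in the sense that $Z$ does not appear on its right-hand side, so the strategy is to establish pathwise uniqueness for $Y$ alone, and then -- treating the $Y$-path as given -- prove pathwise uniqueness for $Z$ via an elementary linear-SDE argument. Combining the two steps will then yield pathwise uniqueness for the full system \eqref{SDE}.

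For the first step I would reduce \eqref{SDE:1} to the multi-type CBI equation \eqref{SDE:3} and invoke Proposition \ref{PROP:00}(a). The diffusion part of \eqref{SDE:1} is already in the required diagonal form $\mathrm{diag}(\sqrt{2\alpha_{i,ii}Y_{s,i}})_{i\in I}$, so only the jump terms require massaging. Define $\nu_I$ and $\mu_i^I$ as the pushforwards of $\nu$ and $\mu_i$ under the projection $D \ni \xi \mapsto \xi_I \in \R_+^m$, and let $M_I$, $N_i^I$ be the corresponding pushforward Poisson random measures. The admissibility conditions of Definition \ref{defi: adm. parameter} give $\int_{\R_+^m}(1\wedge|\xi|)\nu_I(d\xi)<\infty$ and the required integrability of $\mu_i^I$, using $|\xi_I|\leq \sum_{i\in I}\xi_i$ on $\{|\xi|\leq 1\}$ together with the finiteness of $\nu$ and $\mu_i$ on $\{|\xi|>1\}$. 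A further subtlety is that in \eqref{SDE:1} the small/large jump split uses $\{|\xi|\leq 1\}$ in $D$, whereas \eqref{SDE:3} uses $\{|\xi|\leq 1\}$ in $\R_+^m$; this mismatch is harmless and can be absorbed into the drift by an additional adjustment of $\widetilde{b}$ and $\widetilde{\beta}_{II}$ exactly analogous to the passage to \eqref{COMP:00}. After this bookkeeping, \eqref{SDE:1} is precisely of the form \eqref{SDE:3}, and pathwise uniqueness for $Y$ follows from Proposition \ref{PROP:00}(a), which itself rests on the comparison principle Lemma \ref{COMPARISON}.

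For the second step, suppose $(Y,Z)$ and $(Y,Z')$ are two solutions of \eqref{SDE} on the same stochastic basis with identical driving noises; the first step forces the $Y$-components to coincide almost surely. All terms on the right-hand side of \eqref{SDE:2} other than $\int_0^t \widetilde{\beta}_{JJ}Z_s\,ds$ depend only on $Y$ and the prescribed noises, and therefore coincide for $Z$ and $Z'$. Hence $D_t := Z_t - Z'_t$ is an absolutely continuous path satisfying the deterministic linear integral equation $D_t = \int_0^t \widetilde{\beta}_{JJ}D_s\,ds$ with $D_0 = 0$, and Gronwall's inequality yields $D \equiv 0$. Together with the first step, this gives pathwise uniqueness for \eqref{SDE}. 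The only substantive obstacle is the first step: carefully tracking how the projection $\xi \mapsto \xi_I$ transports the jump measures and the compensation thresholds, so that Proposition \ref{PROP:00} applies without hidden mismatches; once that reduction is in place, the rest is routine.
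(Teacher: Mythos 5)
Your proposal is correct and follows essentially the same route as the paper: identify \eqref{SDE:1} with the multi-type CBI equation \eqref{SDE:3} via the projection $\mathrm{pr}_I$ (pushing forward the noises and the measures $\nu,\mu_i$) and invoke Proposition \ref{PROP:00}(a) for the $Y$-component, then subtract the two $Z$-equations driven by the same $Y$ and conclude with Gronwall. The only (harmless) differences are that you spell out the truncation/drift bookkeeping under $\mathrm{pr}_I$, which the paper's claim that \eqref{SDE:1} ``is precisely'' \eqref{SDE:3} leaves implicit, and that you apply Gronwall pathwise to $Z_t-Z_t'$, whereas the paper localizes with stopping times $\tau_l$ and runs the Gronwall argument on expectations before letting $l\to\infty$.
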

\begin{proof}
Let $X=(Y,Z)$ and $X'=(Y',Z')$ be two solutions to
\eqref{SDE} with the same initial condition $x=(y,z)\in D$ both
defined on the same probability space. Then $Y$ and $Y'$ both satisfy
\eqref{SDE:1}. Let us show that \eqref{SDE:1} is precisely \eqref{SDE:3},
from which we deduce $\P(Y_{t}=Y_{t}',\ \ t\geq0)=1$. Set $\mathrm{pr}_{I}:D\longrightarrow\R_{+}^{m}$,
$\mathrm{pr}_{I}(x)=(x_{i})_{i\in I}$, and define
\begin{itemize}
\item A $m$-dimensional Brownian motion $W_{t}:=(W_{t,1}^{1},\dots,W_{t,m}^{m})$.
\item A Poisson random measure $M_{I}(ds,d\xi)$ on $\R_{+}\times\R_{+}^{m}$
by
\[
M_{I}([s,t]\times A)=M([s,t]\times\mathrm{pr}_{I}^{-1}(A)),
\]
where $0\leq s<t$ and $A\subset\R_{+}^{m}$ is a Borel set.
\item Poisson random measures $N_{1}^{I},\dots,N_{m}^{I}$ on $\R_{+}\times\R_{+}^{m}\times\R_{+}$
by
\[
N_{i}^{I}([s,t]\times A\times[c,d])=N_{i}([s,t]\times\mathrm{pr}_{I}^{-1}(A)\times[c,d]),\qquad i\in I,
\]
where $0\leq s<t$, $0\leq c<d$ and $A\subset\R_{+}^{m}$ is a Borel
set.
\end{itemize}
Note that the random objects $W,M_{I},N_{1}^{I},\dots,N_{m}^{I}$
are mutually independent. Moreover, it is not difficult to see that
$M_{I}$ and $N_{1}^{I},\dots,N_{m}^{I}$ have compensators
\[
\widehat{M}_{I}(ds,d\xi)=ds \nu_{I}(d\xi),\ \ \widehat{N}_{i}^{I}(ds,d\xi,dr)=ds\mu_{i}^{I}(d\xi)dr,\qquad i\in I,
\]
where $\nu_{I}=\nu\circ\mathrm{pr}_{I}^{-1}$ and $\mu_{i}^{I}=\mu_{i}\circ\mathrm{pr}_{I}^{-1}$.
Finally let $c_{j}=\alpha_{j,jj}$, $j\in\{1,\dots,m\}$, and
\[
\sigma(y)=\mathrm{diag}(\sqrt{2c_{1}y_{1}},\cdots,\sqrt{2c_{m}y_{m}})\in\R^{m\times m}.
\]
Then \eqref{SDE:1} is precisely \eqref{SDE:3}, and it follows from
Proposition \ref{PROP:00}.(a) that $\P(Y_{t}=Y_{t}',\ \ t\geq0)=1$.

It remains to prove pathwise uniqueness for \eqref{SDE:2}. Define,
for $l\geq1$, a stopping time $\inf\{t>0\ |\ \max\{|Z_{t}|,|Z_{t}'|\}>l\}$.
Since $Z$ and $Z'$ both satisfy \eqref{SDE:2} for the same $Y$,
we obtain
\begin{align}
Z_{t\wedge\tau_{l}}-Z_{t\wedge\tau_{l}}'=\int\limits _{0}^{t\wedge\tau_{l}}\widetilde{\beta}_{JJ}(Z_{s}-Z_{s}')ds\label{eq:Z_t wedge tau l}
\end{align}
and hence, for some constant $C>0$,
\[
\E(|Z_{t\wedge\tau_{l}}-Z_{t\wedge\tau_{l}}'|)\le C\int\limits _{0}^{t}\E(|Z_{s\wedge\tau_{l}}-Z_{s\wedge\tau_{l}}'|)ds.
\]
The Grownwall lemma gives $\P(Z_{t\wedge\tau_{l}}=Z'_{t\wedge\tau_{l}})=1$,
for all $t\geq0$ and $l\geq1$. Note that $Z$ and $Z'$ have no
explosion. Taking $l\to\infty$ proves the assertion. 
\end{proof}

\section{Moments for affine processes}

The stochastic equation introduced in Section 4 can be used to provide
a simple proof for the finiteness of moments for affine processes.
The following is our main result for this section. \begin{Proposition}\label{PROP:MOMENT1}
Let $(a,\alpha,b,\beta,\nu,\mu)$ be admissible parameters. For $x\in D$,
let $X$ be the unique solution to \eqref{SDE}.
\begin{enumerate}
\item[(a)] Suppose that there exists $\kappa>0$ such that
\[
\int\limits _{|\xi|>1}|\xi|^{\kappa}\mu_{i}(d\xi)+\int\limits _{|\xi|>1}|\xi|^{\kappa}\nu(d\xi)<\infty,\qquad i\in I.
\]
Then there exists a constant $C_{\kappa}>0$ (independent of $x$ and $X$) such that
\[
\E(|X_{t}|^{\kappa})\leq(1+|x|^{\kappa})e^{C_{\kappa}t},\qquad t\geq0.
\]
\item[(b)] Suppose that \eqref{EQ:03} is satisfied. Then there exists a constant
$C>0$ (independent of $x$ and $X$) such that
\[
\E(\log(1+|X_{t}|))\leq(1+\log(1+|x|))e^{Ct},\qquad t\geq0.
\]
\end{enumerate}
\end{Proposition}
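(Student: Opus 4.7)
The plan is to apply It\^o's formula to the pathwise unique strong solution $X$ of \eqref{SDE} (provided by Theorem \ref{THEOREM:00}) with the smooth test functions $V_\kappa(x)=(1+|x|^2)^{\kappa/2}$ for part (a) and $V_{\log}(x)=\log(1+|x|^2)$ for part (b), localize by the stopping times $\tau_l=\inf\{t\geq 0\ |\ |X_t|>l\}$ so that all Brownian and compensated Poisson integrals become true martingales and vanish in expectation, bound the resulting drift, diffusion, and jump-compensator contributions by $C(1+V_\kappa(X_s))$ (resp.\ $C(1+V_{\log}(X_s))$), and then close with Gronwall followed by Fatou as $l\to\infty$. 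Since $V_\kappa(x)\leq 1+|x|^\kappa\leq 2V_\kappa(x)$ and $V_{\log}$ is comparable to $\log(1+|x|)$ up to a constant, the stated inequalities follow at once.

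\textbf{Drift, diffusion, and small-jump bounds.} A direct computation gives $|\nabla V_\kappa(x)|\leq \kappa V_\kappa(x)^{1-1/\kappa}$ and $|\nabla^2 V_\kappa(x)|\leq C_\kappa V_\kappa(x)^{1-2/\kappa}$. Combined with $|x|\leq V_\kappa(x)^{1/\kappa}$, this yields $|\langle \nabla V_\kappa,\widetilde{b}+\widetilde{\beta}x\rangle|\leq C(1+V_\kappa(x))$ and an analogous bound for the diffusion trace term $\sum_{k,l}(a_{kl}+\sum_i\alpha_{i,kl}x_i)\partial_{kl}^2V_\kappa$. The small-jump compensators against $\nu$ and $\mu_i$ on $\{|\xi|\leq 1\}$ are controlled by Taylor's theorem together with the admissibility inputs $\int_{|\xi|\leq 1}|\xi|^2\,\nu(d\xi)<\infty$ and $\int_{|\xi|\leq 1}|\xi|^2\,\mu_i(d\xi)<\infty$; in the state-dependent case the prefactor $X_{s-,i}\leq V_\kappa(X_s)^{1/\kappa}$ is absorbed into the power of $V_\kappa$ produced by the $\nabla^2 V_\kappa$ bound.

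\textbf{Large-jump bounds.} For state-independent large jumps, the sub-additivity $|x+\xi|^\kappa\leq c_\kappa(|x|^\kappa+|\xi|^\kappa)$ (if $\kappa\leq 1$) or a second-order Taylor expansion (if $\kappa>1$), combined with $\int_{|\xi|>1}|\xi|^\kappa\,\nu(d\xi)<\infty$, yields the desired $C(1+V_\kappa(x))$ estimate. For state-dependent large jumps the key move is to split $\{|\xi|>1\}$ into $\{|\xi|\leq|X_{s-}|\}\cup\{|\xi|>|X_{s-}|\}$: on the first region the mean-value estimate $|V_\kappa(x+\xi)-V_\kappa(x)|\leq C|x|^{\kappa-1}|\xi|$ (for $\kappa\leq 1$) or the second-order Taylor bound (for $\kappa>1$) combines with the admissibility fact $\int_{|\xi|>1}|\xi|\,\mu_i(d\xi)<\infty$ to give an integrand $\leq C|x|^\kappa\cdot|\xi|$; on the second region $|x|\leq|\xi|$ allows the $x_i$-prefactor to be absorbed via $x_i\cdot|\xi|^\kappa\leq |x|^\kappa|\xi|$, again integrable by the admissibility first moment. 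Both regions thus contribute at most $C(1+V_\kappa(X_s))$.

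\textbf{Conclusion and main obstacle.} Assembling the above gives
\[
\E V_\kappa(X_{t\wedge\tau_l})\leq V_\kappa(x)+C_\kappa\int_0^t \E V_\kappa(X_{s\wedge\tau_l})\,ds,
\]
from which Gronwall and Fatou finish part (a). Part (b) follows the same template with $V_{\log}$: now $|\nabla V_{\log}|$ and $|\nabla^2 V_{\log}|$ are globally bounded, so the drift and diffusion contributions are trivially linear in $V_{\log}$; the crucial estimate for large state-independent jumps is the sub-additivity $V_{\log}(x+\xi)-V_{\log}(x)\leq 2\log(1+|\xi|)$, which is $\nu$-integrable on $\{|\xi|>1\}$ exactly by \eqref{EQ:03}; and the state-dependent jumps require no moment assumption beyond admissibility since $V_{\log}$ grows only logarithmically. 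The main technical obstacle is precisely the state-dependent large jumps in part (a) for small $\kappa$: the $X_{s-,i}\leq V_\kappa(X_s)^{1/\kappa}$ prefactor superficially threatens a super-linear bound, and the region-splitting at $|\xi|=|X_{s-}|$ together with the admissibility input $\int_{|\xi|>1}|\xi|\,\mu_i(d\xi)<\infty$ is essential to close the estimate linearly in $V_\kappa$; once this is done everything else is routine.
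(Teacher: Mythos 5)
Your overall route is exactly the paper's: It\^o's formula for $V_\kappa(x)=(1+|x|^2)^{\kappa/2}$ and $V_{\log}(x)=\log(1+|x|^2)$ applied to the strong solution of \eqref{SDE}, localization by $\tau_l=\inf\{t:|X_t|>l\}$, a generator bound of the form $\mathcal{A}\leq C(1+V)$, then Gronwall and Fatou. Part (a) is sound; your splitting of the state-dependent big jumps at $|\xi|=|X_{s-}|$, using $x_i|\xi|^\kappa\le|x|^\kappa|\xi|$ when $|x|\le|\xi|$ and the admissibility moment $\int_{|\xi|>1}|\xi|\,\mu_i(d\xi)<\infty$, is a legitimate variant of the paper's absorption via $x_i\le(1+|x|^2)^{1/2}$ and submultiplicativity, and it closes the $\kappa\le1$ case correctly.

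In part (b), however, the justifications as written do not suffice. First, ``$|\nabla V_{\log}|$ and $|\nabla^2 V_{\log}|$ are globally bounded'' only yields a drift contribution of order $1+|x|$ and a diffusion contribution of order $1+|x|$ (the coefficients $\widetilde b+\widetilde\beta x$ and $a_{kl}+\sum_i\alpha_{i,kl}x_i$ grow linearly), and $1+|x|$ is not dominated by $C(1+V_{\log}(x))$. What is actually needed, and what the paper's appendix records, is the decay $|\nabla V_{\log}(x)|\le 2|x|/(1+|x|^2)$ and $|\partial^2_{kl}V_{\log}(x)|\le C/(1+|x|^2)$, which exactly cancels the linear growth. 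Second, for the state-dependent large jumps it is not true that ``logarithmic growth'' alone settles the matter: the naive bound $V_{\log}(x+\xi)-V_{\log}(x)\le 2\log(1+|\xi|)$ multiplied by the prefactor $x_i$ gives $x_i\log(1+|\xi|)$, which is not $\le C(1+V_{\log}(x))$ times a $\mu_i$-integrable function. One must absorb $x_i$ as in the paper: by the mean value theorem, $V_{\log}(x+\xi)-V_{\log}(x)\le 2|\xi|\int_0^1\frac{|x+t\xi|}{1+|x+t\xi|^2}\,dt$, and since $\xi_I\ge 0$ one has $x_i\le|x+t\xi|$, hence $x_i\bigl(V_{\log}(x+\xi)-V_{\log}(x)\bigr)\le 2|\xi|$, which is integrable over $\{|\xi|>1\}$ precisely by the admissibility first moment of $\mu_i$; similarly $x_i|\langle\xi,\nabla V_{\log}(x)\rangle|\le 2|\xi|$. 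These are one-line repairs, but without them the estimate $\mathcal{A}_{2}\le C(1+V_{\log})$, and hence part (b), does not close as stated.
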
 \begin{proof} Define $V_{1}(h)=(1+|h|^{2})^{\kappa/2}$
and $V_{2}(h)=\log(1+|h|^{2})$, where $h\in D$. Applying the Itô
formula for $V_{j}$, $j\in\{1,2\}$, gives
\begin{align}
V_{j}(X_{t}) & =V_{j}(x)+\int\limits _{0}^{t}\mathcal{A}_{j}(X_{s})ds+\mathcal{M}_{j}(t),\label{EQ:01}
\end{align}
where $(\mathcal{M}_{j}(t))_{t\geq0}$ and $\mathcal{A}_{j}(\cdot)$
are given by
\begin{align*}
\mathcal{A}_{j}(h) & =\langle\widetilde{b}+\beta h,\nabla V_{j}(h)\rangle+\sum\limits _{k,l=1}^{d}\left(a_{kl}+\sum\limits _{i=1}^{m}\alpha_{i,kl}x_{i}\right)\frac{\partial^{2}V_{j}(h)}{\partial h_{k}\partial h_{l}}\\
 & \ \ \ +\int\limits _{D}\left(V_{j}(h+\xi)-V_{j}(h)-\langle\xi,\nabla V_{j}(h)\rangle\1_{\{|\xi|\leq1\}}\right)\nu(d\xi)\\
 & \ \ \ +\sum\limits _{i=1}^{m}h_{i}\int\limits _{D}\left(V_{j}(h+\xi)-V_{j}(h)-\langle\xi,\nabla V_{j}(h)\rangle\right)\mu_{i}(d\xi),\\
\mathcal{M}_{j}(t) & =\sqrt{2}\int\limits _{0}^{t}\left\langle \nabla_{J}V_{j}(X_{s}),\sigma_{a}dB_{s,J}\right\rangle +\sum\limits _{i=1}^{m}\int\limits _{0}^{t}\sqrt{2X_{s,i}}\left\langle \nabla V_{j}(X_{s}),\sigma_{i}dW_{s}^{i}\right\rangle \\
 & \ \ \ +\int\limits _{0}^{t}\int\limits _{D}\left(V_{j}(X_{s-}+\xi)-V_{j}(X_{s-})\right)\widetilde{M}(ds,d\xi)\\
 & \ \ \ +\sum\limits _{i=1}^{m}\int\limits _{0}^{t}\int\limits _{D}\int\limits _{\R_{+}}\left(V_{j}(X_{s-}+\xi\1_{\{r\leq X_{s-,i}\}})-V_{j}(X_{s-})\right)\widetilde{N}_{i}(ds,d\xi,dr),
\end{align*}
where $\widetilde{b}$ was defined in \eqref{COMP:00}. Define, for
$l\geq1$, a stopping time
$\tau_{l}=\inf\{t\geq0\ |\ |X_{t}| > l \}$.
Then it is not difficult to see that $(\mathcal{M}_{j}(t\wedge\tau_{l}))_{t\geq0}$
is a martingale, for any $l\geq1$. Moreover, we will prove in the
appendix that there exists a constant $C>0$ such that
\begin{align}
\mathcal{A}_{j}(h)\leq C(1+V_{j}(h)),\qquad h\in D.\label{EQ:10}
\end{align}
Hence taking expectations in \eqref{EQ:01} gives
\[
\E(V_{j}(X_{t\wedge\tau_{l}}))\leq V_{j}(x)+C\int\limits _{0}^{t}\left(1+\E(V_{j}(X_{s\wedge\tau_{l}}))\right)ds.
\]
Applying the Gronwall lemma gives $\E(V_{j}(X_{t\wedge\tau_{l}}))\leq(V_{j}(x)+Ct)e^{Ct}\leq(1+V_{j}(x))e^{C't}$,
for all $t\geq0$ and some constant $C'>0$. Since $(X_{t})_{t\geq0}$
has cádlág paths and $C'$ is independent of $l$, we may take the
limit $l\to\infty$ and conclude the assertion by the lemma of Fatou.
\end{proof} We close this section with a formula for the first moment
of general affine processes. The particular case of multi-type CBI
processes was treated in \citep[Lemma 3.4]{BLP15}, while recursion
formulas for higher-order moments of multi-type CBI processes were
provided in \citep{BLP16}. \begin{Lemma}\label{MOMENT} Let $(a,\alpha,b,\beta,\nu,\mu)$
be admissible parameters and suppose that
\begin{align}
\int\limits _{|\xi|>1}|\xi|\nu(d\xi)<\infty.\label{EQ:08}
\end{align}
Let $(X_{t})_{t\geq0}$ be an affine process obtained from \eqref{SDE}
with $X_{0}=x\in D$. Then
\[
\E(X_{t})=e^{t\beta}x+\int\limits _{0}^{t}e^{s\beta}\overline{b}ds,
\]
where $\overline{b}_{i}=b_{i}+\int_{|\xi|>1}\xi_{i}\nu(d\xi)+\1_{I}(i)\int_{|\xi|\leq1}\xi_{i}\nu(d\xi)$.
$x=(y,z)\in\R_{+}^{m}\times\R^{n}$ and $X=(Y,Z)\in \R_{+}^{m}\times\R^{n}$,
then
\begin{align*}
\E(Y_{t}) & =e^{t\beta_{II}}y+\int\limits _{0}^{t}e^{s\beta_{II}}\overline{b}_{I}ds,\\
E(Z_{t}) & =e^{t\beta_{JJ}}z+\int\limits _{0}^{t}e^{s\beta_{JJ}}\overline{b}_{J}ds+\int\limits _{0}^{t}e^{\left(t-s\right)\beta_{JJ}}\beta_{JI}e^{s\beta_{II}}yds+\int\limits _{0}^{t}\int\limits _{0}^{s}e^{\left(t-s\right)\beta_{JJ}}\beta_{JI}e^{u\beta_{II}}\overline{b}_{I}duds.
\end{align*}
\end{Lemma}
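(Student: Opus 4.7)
The plan is to take expectations in the stochastic equation \eqref{SDE} and solve the resulting linear integral equation. First I would invoke Proposition \ref{PROP:MOMENT1}(a) with $\kappa=1$; its hypothesis holds because the paper's standing convention (stated just after Definition \ref{defi: adm. parameter}) gives $\int_{|\xi|>1}|\xi|\mu_{i}(d\xi)<\infty$ and the extra assumption \eqref{EQ:08} handles $\nu$. This yields the a priori bound $\sup_{s\le t}\E(|X_s|)<\infty$, which is the integrability needed to justify all interchanges of expectation and integration below.

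Next, I would apply $\E[\cdot]$ to \eqref{SDE}. The Brownian integrals and the compensated Poisson integrals $\widetilde M$, $\widetilde N_i$ are true (square-integrable) martingales, thanks to the first-moment bound together with the admissibility conditions $\int_{|\xi|\le 1}|\xi|^2\nu(d\xi)<\infty$ and $\int_{|\xi|\le 1}|\xi|^2\mu_i(d\xi)<\infty$, so they vanish in expectation. The two non-compensated big-jump integrals are evaluated by Fubini against their compensators, producing $t\int_{|\xi|>1}\xi\,\nu(d\xi)$ and $\sum_{i\in I}\int_0^t\E(X_{s,i})\,ds\int_{|\xi|>1}\xi\,\mu_i(d\xi)$, respectively. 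Plugging in the definitions \eqref{COMP:00} of $\widetilde b$ and $\widetilde\beta$, the big-jump corrections exactly restore $b$ and $\beta$ and produce $\overline b$, yielding
\[
\E(X_t) \;=\; x \;+\; \overline b\, t \;+\; \beta\int_0^t \E(X_s)\, ds, \qquad t\ge 0.
\]
Equivalently, $u(t):=\E(X_t)$ solves $u'(t)=\overline b+\beta u(t)$ with $u(0)=x$, whose unique solution is $e^{t\beta}x + \int_0^t e^{(t-s)\beta}\overline b\, ds$; the change of variables $s\mapsto t-s$ rewrites this as $e^{t\beta}x + \int_0^t e^{s\beta}\overline b\, ds$, proving the first formula.

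For the component formulas I would exploit that $\beta_{IJ}=0$ by Definition \ref{defi: adm. parameter}(iv), so $\beta$ is block lower-triangular. A standard variation-of-constants calculation then gives
\[
e^{t\beta} \;=\; \begin{pmatrix} e^{t\beta_{II}} & 0 \\ \int_0^t e^{(t-r)\beta_{JJ}}\beta_{JI}e^{r\beta_{II}}\, dr & e^{t\beta_{JJ}} \end{pmatrix}.
\]
Reading off the $I$- and $J$-blocks of $\E(X_t)=e^{t\beta}x + \int_0^t e^{(t-s)\beta}\overline b\, ds$ with $x=(y,z)$ and applying Fubini to the resulting double integral yields the announced expressions for $\E(Y_t)$ and $\E(Z_t)$.

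The main obstacle is the justification of passing the expectation inside the various stochastic integrals: the compensated $\widetilde N_i$ integrals are a priori only local martingales, and evaluating the non-compensated big-jump integrals requires Fubini to apply to the random integrand $\1_{\{r\le X_{s-,i}\}}$. Both issues are resolved uniformly on bounded time intervals once the first-moment bound of Proposition \ref{PROP:MOMENT1}(a) is available; after that, the remaining linear ODE and block matrix exponential calculations are routine.
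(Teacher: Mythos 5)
Your proposal is correct and follows essentially the same route as the paper: invoke Proposition \ref{PROP:MOMENT1}(a) for the first-moment bound, take expectations in \eqref{SDE} so that the martingale terms vanish and the big-jump compensators restore $b$, $\beta$ and produce $\overline{b}$, and solve the resulting linear equation $\E(X_{t})=x+\int_{0}^{t}\left(\overline{b}+\beta\E(X_{s})\right)ds$ by variation of constants. The only cosmetic difference is in the component formulas, which you read off from the block-triangular form of $e^{t\beta}$ (using $\beta_{IJ}=0$), while the paper takes expectations in \eqref{SDE:1} and \eqref{SDE:2} separately and solves the two component equations sequentially; these are equivalent routine computations.
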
 \begin{proof} First observe that, by definition of admissible
parameters and \eqref{EQ:08}, we may apply Proposition \ref{PROP:MOMENT1}
(a) and deduce that $X_{t}$ has finite first moment. Taking expectations
in \eqref{SDE} gives
\[
\E(X_{t})=x+\int\limits _{0}^{t}\left(\overline{b}+\beta\E(X_{s})\right)ds.
\]
Solving this equation gives the desired formula for $\E(X_{t})$.
Taking expectations in \eqref{SDE:3} (or \eqref{SDE:1}) gives
\[
\E(Y_{t})=y+\int\limits _{0}^{t}\left(\overline{b}_{I}+\beta_{II}\E(Y_{s})\right)ds,
\]
which implies the desired formula for $\E(Y_{t})$. Finally, taking
expectations in \eqref{SDE:2} gives
\[
\E(Z_{t})=z+\int\limits _{0}^{t}\left(\overline{b}_{J}+\beta_{JI}\E(Y_{s})+\beta_{JJ}\E(Z_{s})\right)ds.
\]
Solving this equation and using previous formula for $\E(Y_{s})$,
we obtain the assertion. \end{proof}

\section{Contraction estimate for trajectories of affine processes}

The following is our main estimate for this section. \begin{Proposition}\label{LEMMA:00}
Let $(a,\alpha,b,\beta,\nu,\mu)$ be admissible parameters, suppose
that \eqref{EQ:08} is satisfied, and assume that $\beta$ has only
eigenvalues with negative real parts. Let $x=(y,z),\widetilde{x}=(\widetilde{y},\widetilde{z})\in \R_{+}^{m}\times\R^{n}$,
and let $X(x)=(Y(y),Z(x))$ and $X(\widetilde{x})=(Y(\widetilde{y}),Z(\widetilde{x}))$
be the unique strong solutions to \eqref{SDE} with initial condition
$x$ and $\widetilde{x}$, respectively. Then there exist constants
$K,\delta,\delta'>0$ independent of $X(x)$ and $X(\widetilde{x})$
such that, for all $t\geq0$,
\begin{align}
\E(|Y_{t}(y)-Y_{t}(\widetilde{y})|) & \leq d|y-\widetilde{y}|e^{-\delta't},\label{EST:00}\\
\E(|X_{t}(x)-X_{t}(\widetilde{x})|) & \leq Ke^{-\delta t}\left(\1_{\{n>0\}}|y-\widetilde{y}|^{1/2}+|x-\widetilde{x}|\right).\label{EST:01}
\end{align}
\end{Proposition}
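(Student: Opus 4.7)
The proof splits along the decomposition $X_t = (Y_t, Z_t)$ from Section 4: $Y$ is the multi-type CBI process obeying \eqref{SDE:1}, while $Z$ obeys the OU-type equation \eqref{SDE:2} whose coefficients depend on $Y$. I will prove \eqref{EST:00} first via the comparison principle for the CBI component, then derive \eqref{EST:01} by applying variation of constants to the linear SDE for $\Delta Z_t := Z_t(x) - Z_t(\widetilde x)$, feeding in \eqref{EST:00}. Throughout I exploit that admissibility forces $\beta_{IJ}=0$, so $\beta$ is block lower triangular and both $\beta_{II}$ and $\widetilde\beta_{JJ}$ inherit stability from $\beta$; in particular $\|e^{t\beta_{II}}\|+\|e^{t\widetilde\beta_{JJ}}\|\leq Ce^{-\delta t}$ for some $\delta>0$.

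\textbf{Proof of \eqref{EST:00}.} Set $y^+=y\vee\widetilde y$ and $y^-=y\wedge\widetilde y$ (componentwise in $\R_+^m$). Let $Y(y^{\pm})$ be the solutions of \eqref{SDE:1} starting from $y^{\pm}$ driven by the \emph{same} noises as $Y(y), Y(\widetilde y)$. Lemma~\ref{COMPARISON}, applied with identical drift $b$ but different initial conditions, gives componentwise and almost surely $Y_{t,j}(y^-)\leq Y_{t,j}(y),\,Y_{t,j}(\widetilde y)\leq Y_{t,j}(y^+)$ for all $j\in I$, $t\geq 0$, whence
\[
|Y_{t,j}(y)-Y_{t,j}(\widetilde y)|\leq Y_{t,j}(y^+)-Y_{t,j}(y^-),\qquad j\in I.
\]
Taking expectations and invoking Lemma~\ref{MOMENT} (the immigration term $\overline b_I$ cancels under subtraction) yields $\E[Y_t(y^+)-Y_t(y^-)]=e^{t\beta_{II}}(y^+-y^-)$. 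Since $y^+-y^-$ is the componentwise absolute value of $y-\widetilde y$, summing components and using $\|e^{t\beta_{II}}\|\leq Ce^{-\delta't}$ gives \eqref{EST:00}, the constant $d$ absorbing the norm equivalence in $\R^m$.

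\textbf{Proof of \eqref{EST:01}.} The $Y$-component of \eqref{EST:01} follows immediately from \eqref{EST:00} together with $|y-\widetilde y|\leq|x-\widetilde x|$. For the $Z$-component, subtracting two copies of \eqref{SDE:2} gives the linear SDE
\[
d\Delta Z_t=\widetilde\beta_{JJ}\Delta Z_t\,dt+\widetilde\beta_{JI}\Delta Y_t\,dt+d\mathcal{N}_t,
\]
where $\mathcal{N}_t$ collects the differences of the diffusion and jump noises (each vanishing when $Y=\widetilde Y$, since the $a$- and $\nu$-noises are state-independent and cancel). Variation of constants produces
\[
\Delta Z_t=e^{t\widetilde\beta_{JJ}}\Delta z+\int_0^t e^{(t-s)\widetilde\beta_{JJ}}\widetilde\beta_{JI}\Delta Y_s\,ds+\int_0^t e^{(t-s)\widetilde\beta_{JJ}}\,d\mathcal{N}_s.
\]
The first two terms are bounded in $L^1$ by $Ce^{-\delta''t}|x-\widetilde x|$, combining stability of $\widetilde\beta_{JJ}$, estimate \eqref{EST:00}, and the convolution bound $\int_0^t e^{-\delta(t-s)}e^{-\delta's}\,ds\leq Ce^{-\delta''t}$. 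The stochastic integral is split into a Brownian piece containing $\sqrt{2Y_{s,i}}-\sqrt{2\widetilde Y_{s,i}}$, a compensated small-jump piece containing $\mathbf 1_{\{r\leq Y_{s-,i}\}}-\mathbf 1_{\{r\leq \widetilde Y_{s-,i}\}}$, and an uncompensated big-jump piece with the same indicator difference. For the first two, Itô's isometry combined with $(\sqrt a-\sqrt b)^2\leq|a-b|$ and $(\mathbf 1_{\{r\leq a\}}-\mathbf 1_{\{r\leq b\}})^2=\mathbf 1_{\{r\in[a\wedge b,a\vee b]\}}$, together with the admissibility bounds on $\mu_i,\nu$ and \eqref{EST:00}, gives $L^2$-estimates of order $|y-\widetilde y|\,e^{-\delta''t}$, hence by Lyapunov's inequality $\E|\cdot|\leq(\E|\cdot|^2)^{1/2}$ the $L^1$-bound $|y-\widetilde y|^{1/2}e^{-\delta''t/2}$. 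The big-jump piece is handled directly in $L^1$ via its positive compensator and the first moment $\int_{|\xi|>1}|\xi|\mu_i(d\xi)<\infty$, giving a bound of order $|y-\widetilde y|\,e^{-\delta''t}\leq|x-\widetilde x|\,e^{-\delta''t}$. Summing all contributions yields \eqref{EST:01}.

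\textbf{Main obstacle.} The delicate point is the stochastic integral estimate: one must localise via $\tau_l=\inf\{t\geq 0:|X_t(x)|\vee|X_t(\widetilde x)|>l\}$ to secure the martingale property before letting $l\to\infty$ by Fatou, and one must feed $\|e^{(t-s)\widetilde\beta_{JJ}}\|^2\leq Ce^{-2\delta(t-s)}$ \emph{together} with $\E|\Delta Y_s|\leq C|y-\widetilde y|e^{-\delta's}$ inside the isometry, so the time convolution stays uniformly $Ce^{-\delta''t}$ rather than merely bounded. The $1/2$-power loss from Lyapunov's inequality, intrinsic to controlling the diffusive/small-jump noise through an $L^2$-isometry against an $L^1$-bound on $\Delta Y$, is precisely what forces the term $\mathbf 1_{\{n>0\}}|y-\widetilde y|^{1/2}$ in \eqref{EST:01} and motivates the choice of the metrics $d_\kappa,d_{\log}$ in Section 1.3.
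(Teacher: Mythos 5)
Your proof is correct and follows essentially the same strategy as the paper: the comparison principle (Lemma~\ref{COMPARISON}) combined with the first-moment formula (Lemma~\ref{MOMENT}) gives \eqref{EST:00}, and \eqref{EST:01} then follows from an integrating-factor representation of the difference process, estimating the $Y$-driven noise terms through \eqref{EST:00} and a BDG/isometry argument whose $L^2$-to-$L^1$ passage produces exactly the $|y-\widetilde y|^{1/2}$ loss. Two local variants are worth recording. For \eqref{EST:00} you sandwich both solutions between $Y(y^-)$ and $Y(y^+)$ with $y^{\pm}$ the componentwise min/max, whereas the paper interpolates coordinate by coordinate through intermediate points $y^0,\dots,y^m$; your route is slightly cleaner since $|y^+-y^-|=|y-\widetilde y|$ (in either version the stated constant $d$ is only as sharp as the glossed bound $|e^{t\beta_{II}}v|\leq e^{-\delta' t}|v|$, which both you and the paper use with a generic constant, and which is harmless for all later applications). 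For \eqref{EST:01} the paper applies $e^{-t\beta}$ to the full $d$-dimensional system and compensates the large jumps, so the difference consists of $e^{t\beta}(x-\widetilde x)$ plus compensated martingale terms handled by the BDG inequality; you instead apply variation of constants to the $Z$-equation alone, keeping the cross drift $\widetilde\beta_{JI}\Delta Y_s$ and the uncompensated large-jump integral explicit, the latter being bounded in $L^1$ directly through its compensator using $\int_{|\xi|>1}|\xi|\,\mu_i(d\xi)<\infty$ and \eqref{EST:00}; this is valid and, if anything, a touch more elementary than the paper's BDG-plus-subadditivity treatment of the large jumps, at the cost of one extra convolution term to estimate.
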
 \begin{proof} Let us first prove \eqref{EST:00}.
Note that $Y(y)$ and $Y(\widetilde{y})$ are multi-type CBI processes
with the same parameters. If $\widetilde{y}_{j}\leq y_{j}$ for all
$j\in\{1,\dots,m\}$, then we obtain from Lemma \ref{COMPARISON}
and Lemma \ref{MOMENT}
\begin{align*}
\E(|Y_{t}(y)-Y_{t}(\widetilde{y})|) & \leq\sum\limits _{j=1}^{m}\E(|Y_{t,j}(y)-Y_{t,j}(\widetilde{y})|)\\
 & =\sum\limits _{j=1}^{m}\E(Y_{t,j}(y)-Y_{t,j}(\widetilde{y}))\\
 & =\sum\limits _{j=1}^{m}\left(e^{t\beta_{II}}(y-\widetilde{y})\right)_{j}\leq\sqrt{d}|e^{t\beta_{II}}(y-\widetilde{y})|\leq\sqrt{d}e^{-\delta't}|y-\widetilde{y}|,
\end{align*}
where we have used that $\beta_{II}$ has only eigenvalues with negative
real parts (since $\beta$ has this property and $\beta_{IJ}=0$).
For general $y,\widetilde{y}$, let $y^{0},\dots,y^{m}\in\R_{+}^{m}$
be such that
\[
y^{0}:=y,\ \ y^{m}=\widetilde{y},\ \ y^{j}=\sum\limits _{k=1}^{j}e_{k}\widetilde{y}_{k}+\sum\limits _{k=j+1}^{m}e_{k}y_{k},\ \ j\in\{1,\dots,m-1\},
\]
where $e_{1},\dots,e_{m}$ denote the canonical basis vectors in $\R^{m}$.
Then, for each $j\in\{0,\dots,m-1\}$, the elements $y^{j},y^{j+1}$
are comparable in the sense that $y_{k}^{j}=y_{k}^{j+1}$ if $k\neq j+1$,
and either $y_{j+1}^{j}\leq y_{j+1}^{j+1}$ or $y_{j+1}^{j}\geq y_{j+1}^{j+1}$.
In any case, we obtain from the previous consideration
\begin{align*}
\E(|Y_{t}(y)-Y_{t}(\widetilde{y})|) & \leq\sum\limits _{j=0}^{m-1}\E(|Y_{t}(y^{j})-Y_{t}(y^{j+1})|)\\
 & \leq\sqrt{d}e^{-\delta't}\sum\limits _{j=0}^{m-1}|y^{j}-y^{j+1}|\\
 & =\sqrt{d}e^{-\delta't}\sum\limits _{j=0}^{m-1}|y_{j+1}-\widetilde{y}_{j+1}|\leq de^{-\delta't}|y-\widetilde{y}|,
\end{align*}
where we have used $|y^{j}-y^{j+1}|=|y_{j+1}-\widetilde{y}_{j+1}|$.
This completes the proof of \eqref{EST:00}.

If $n=0$, then \eqref{EST:01} is trivial. Suppose that $n>0$. Applying
the Itô formula to $e^{-t\beta}X_{t}(x)$ and $e^{-t\beta}X_{t}(\widetilde{x})$,
and then taking the difference, gives
\begin{align*}
X_{t}(x)-X_{t}(\widetilde{x}) & =e^{t\beta}(x-\widetilde{x})+\sum\limits _{i\in I}\int\limits _{0}^{t}e^{(t-s)\beta}\left(\sqrt{2X_{s,i}(x)}-\sqrt{2X_{s,i}(\widetilde{x})}\right)\sigma_{i}dW_{s}^{i}\\
 & \ \ \ +\sum\limits _{i\in I}\int\limits _{0}^{t}\int\limits _{D}\int\limits _{\R_{+}}e^{(t-s)\beta}\xi\left(\1_{\{r\leq X_{s-,i}(x)\}}-\1_{\{r\leq X_{s-,i}(\widetilde{x})\}}\right)\widetilde{N}_{i}(ds,d\xi,dr).
\end{align*}
Here and below we denote by $K>0$ a generic constant which may vary
from line to line. Moreover, we find $\delta_{0}>0$ and $\delta\in(0,\delta')$
such that
\begin{align}
|e^{t\beta}|^{2}\leq e^{-\delta_{0}t}\text{ and }\int\limits _{0}^{t}e^{-(t-s)\frac{\delta_{0}}{2}}e^{-\delta's}ds\leq Ke^{-2\delta t},\qquad t\geq0.\label{EQ:15}
\end{align}
The stochastic integral against the Brownian motion is estimated by
the BDG-inequality as follows
\begin{align*}
 & \ \E\left(\left|\int\limits _{0}^{t}e^{(t-s)\beta}\left(\sqrt{2X_{s,i}(x)}-\sqrt{2X_{s,i}(\widetilde{x})}\right)\sigma_{i}dW_{s}^{i}\right|\right)\\
 & \leq K\left(\int\limits _{0}^{t}\E\left(\left|e^{(t-s)\beta}\left(\sqrt{2X_{s,i}(x)}-\sqrt{2X_{s,i}(\widetilde{x})}\right)\sigma_{i}\right|^{2}\right)ds\right)^{1/2}\\
 & \leq K\left(\int\limits _{0}^{t}e^{-\delta_{0}(t-s)}\E(|X_{s,i}(x)-X_{s,i}(\widetilde{x})|)ds\right)^{1/2}\\
 & \leq K\left(\int\limits _{0}^{t}e^{-\delta_{0}(t-s)}e^{-\delta's}ds\right)^{1/2}|y-\widetilde{y}|^{1/2}\leq Ke^{-\delta t}|y-\widetilde{y}|^{1/2},
\end{align*}
where we have used \eqref{EST:00} and \eqref{EQ:15}. For the stochastic
integral against $\widetilde{N}_{i}$ we consider the cases $|\xi|\leq1$
and $|\xi|>1$ separately. For $|\xi|\leq1$ we apply first the BDG-inequality
and then the Jensen inequality to obtain, for each $i\in I$,
\begin{align*}
 & \ \E\left(\left|\int\limits _{0}^{t}\int\limits _{|\xi|\leq1}\int\limits _{\R_{+}}e^{(t-s)\beta}\xi\left(\1_{\{r\leq X_{s-,i}(x)\}}-\1_{\{r\leq X_{s-,i}(\widetilde{x})\}}\right)\widetilde{N}_{i}(ds,d\xi,dr)\right|\right)\\
 & \leq K\E\left(\left|\int\limits _{0}^{t}\int\limits _{|\xi|\leq1}\int\limits _{\R_{+}}|e^{(t-s)\beta}\xi|^{2}|\1_{\{r\leq X_{s-,i}(x)\}}-\1_{\{r\leq X_{s-,i}(\widetilde{x})\}}|^{2}N_{i}(dr,d\xi,ds)\right|^{1/2}\right)\\
 & \leq K\left(\int\limits _{0}^{t}\int\limits _{|\xi|\leq1}\int\limits _{\R_{+}}|e^{(t-s)\beta}\xi|^{2}\E(|\1_{\{r\leq X_{s-,i}(x)\}}-\1_{\{r\leq X_{s-,i}(\widetilde{x})\}}|^{2})dr\mu_{i}(d\xi)ds\right)^{1/2}\\
 & \leq K\left(\int\limits _{0}^{t}e^{-(t-s)\delta_{0}}\E(|X_{s,i}(x)-X_{s,i}(\widetilde{x})|)ds\right)^{1/2}\\
 & \leq K|y-\widetilde{y}|^{1/2}\left(\int\limits _{0}^{t}e^{-(t-s)\delta_{0}}e^{-\delta's}ds\right)^{1/2}\leq Ke^{-\delta t}|y-\widetilde{y}|^{1/2}.
\end{align*}
For $|\xi|>1$, we apply first the BDG-inequality and then use the
sub-additivity of $a\longmapsto a^{1/2}$ to obtain
\begin{align*}
 & \ \E\left(\left|\int\limits _{0}^{t}\int\limits _{|\xi|>1}\int\limits _{\R_{+}}e^{(t-s)\beta}\xi\left(\1_{\{r\leq X_{s-,i}(x)\}}-\1_{\{r\leq X_{s-,i}(\widetilde{x})\}}\right)\widetilde{N}_{i}(ds,d\xi,dr)\right|\right)\\
 & \leq K\E\left(\left|\int\limits _{0}^{t}\int\limits _{|\xi|>1}\int\limits _{\R_{+}}|e^{(t-s)\beta}\xi|^{2}|\1_{\{r\leq X_{s-,i}(x)\}}-\1_{\{r\leq X_{s-,i}(\widetilde{x})\}}|^{2}N_{i}(dr,d\xi,ds)\right|^{1/2}\right)\\
 & \leq K\int\limits _{0}^{t}\int\limits _{|\xi|>1}\int\limits _{\R_{+}}\E\left(|e^{(t-s)\beta}\xi||\1_{\{r\leq X_{s-,i}(x)\}}-\1_{\{r\leq X_{s-,i}(\widetilde{x})\}}|\right)dr\mu_{i}(d\xi)ds\\
 & \leq K\int\limits _{0}^{t}e^{-(t-s)\frac{\delta_{0}}{2}}\E(|X_{s,i}(x)-X_{s,i}(\widetilde{x})|)ds\\
 & \leq K|y-\widetilde{y}|\int\limits _{0}^{t}e^{-(t-s)\frac{\delta_{0}}{2}}e^{-\delta's}ds\leq Ke^{-2\delta t}|x-\widetilde{x}|,
\end{align*}
where we have used $|y-\widetilde{y}|\leq|x-\widetilde{x}|$. Collecting
all estimates proves the assertion. \end{proof}

\section{Proof of Theorem \ref{THEOREM:01}}

\subsection{The $\log$-Wasserstein estimate}

Based on the results of Section 6, we first deduce
the following estimate with respect to the $\log$-Wasserstein distance.
\begin{Proposition}\label{PROP:04}
Let $(P_{t})_{t\ge0}$ be the
transition semigroup with admissible parameters $(a,\alpha,b,\beta,\nu,\mu)$,
suppose that $\beta$ has only eigenvalues with negative real parts,
and \eqref{EQ:03} is satisfied. Then there exist constants $K,\delta>0$
such that, for any $\rho,\widetilde{\rho}\in\mathcal{P}_{\log}(D)$,
one has
\[
W_{\log}(P_{t}\rho,P_{t}\widetilde{\rho})\leq K\min\left\{ e^{-\delta t},W_{\log}(\rho,\widetilde{\rho})\right\} +Ke^{-\delta t}W_{\log}(\rho,\widetilde{\rho}),\qquad t\geq0.
\]
\end{Proposition}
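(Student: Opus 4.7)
The plan is to mimic the outline of Step 4 in Section 1.5 for the $\kappa$-Wasserstein estimate, adapted to the $\log$-Wasserstein distance. First, by the convexity of the Wasserstein distance (Lemma \ref{WASSERSTEIN:1}) combined with the concavity of the map $u\mapsto K\min\{e^{-\delta t},u\}+Ke^{-\delta t}u$, Jensen's inequality reduces the claim to its point-mass version: it suffices to prove, for all $x,\widetilde{x}\in D$,
\[
W_{\log}(P_{t}\delta_{x},P_{t}\delta_{\widetilde{x}})\leq K\min\{e^{-\delta t},d_{\log}(x,\widetilde{x})\}+Ke^{-\delta t}d_{\log}(x,\widetilde{x}).
\]

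Next, I would invoke the affine convolution identity $P_{t}(x,\cdot)=P_{t}^{0}(x,\cdot)*P_{t}(0,\cdot)$, a direct consequence of \eqref{LAPLACE}, where $P_{t}^{0}$ corresponds to the admissible parameters $(a=0,\alpha,b=0,\beta,\nu=0,\mu)$. Drawing a common auxiliary sample $Y\sim P_{t}(0,\cdot)$ independent of two $P_{t}^{0}$-samples $X_{t}^{0}(x),X_{t}^{0}(\widetilde{x})$, and then adding $Y$ to both, yields a coupling of $(P_{t}\delta_{x},P_{t}\delta_{\widetilde{x}})$ whose $d_{\log}$-cost does not depend on $Y$ (since $d_{\log}$ is a function only of the difference of its arguments). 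Hence $W_{\log}(P_{t}\delta_{x},P_{t}\delta_{\widetilde{x}})\leq W_{\log}(P_{t}^{0}\delta_{x},P_{t}^{0}\delta_{\widetilde{x}})$, reducing the problem to the semigroup $P_{t}^{0}$, which has no state-independent jumps.

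For $P_{t}^{0}$ the first-moment condition \eqref{EQ:08} is trivially satisfied, so Proposition \ref{LEMMA:00} applies. Using the natural synchronous coupling and combining \eqref{EST:00} (together with H\"older's inequality $\E|Z|^{1/2}\leq(\E|Z|)^{1/2}$) with \eqref{EST:01} one obtains
\[
\E\left(\1_{\{n>0\}}|Y_{t}^{0}(y)-Y_{t}^{0}(\widetilde{y})|^{1/2}+|X_{t}^{0}(x)-X_{t}^{0}(\widetilde{x})|\right)\leq Ke^{-\delta't}s_{0},
\]
where $s_{0}:=\1_{\{n>0\}}|y-\widetilde{y}|^{1/2}+|x-\widetilde{x}|$ and $\delta'>0$. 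Jensen's inequality applied to the concave function $\log(1+\cdot)$ then delivers
\[
W_{\log}(P_{t}^{0}\delta_{x},P_{t}^{0}\delta_{\widetilde{x}})\leq\log(1+Ke^{-\delta't}s_{0}).
\]

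The final step is to dominate $\log(1+Ke^{-\delta't}s_{0})$ by the target expression in $w:=d_{\log}(x,\widetilde{x})=\log(1+s_{0})$, for a suitable choice $\delta\in(0,\delta')$. Two complementary elementary inequalities are available: $\log(1+ab)\leq\log(1+a)+\log(1+b)$ for $a,b\geq 0$, and $\log(1+u)\leq u$. A case split on $s_{0}\leq 1$ (where $s_{0}\leq ew$, yielding $\log(1+Ke^{-\delta't}s_{0})\leq Ke^{-\delta't}s_{0}\leq eKe^{-\delta t}w$ and thus feeding the second term of the bound) versus $s_{0}>1$ (where the log-sum inequality gives $\log(1+Ke^{-\delta't}s_{0})\leq Ke^{-\delta't}+w$, whose first summand matches the $K\min\{e^{-\delta t},w\}$ piece once $\delta<\delta'$) closes the argument. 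The main obstacle lies precisely in this last log manipulation: because $\log(1+\cdot)$ is concave, the $L^{1}$-type contraction of Proposition \ref{LEMMA:00} does not transfer directly into a pure exponential contraction in $W_{\log}$, so the mixed form of the right-hand side has to be carefully matched with the two regimes of $s_{0}$, with the strict inequality $\delta<\delta'$ providing the slack needed to absorb the remaining constants.
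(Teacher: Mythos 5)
Up to the last step your argument is the paper's: the reduction to point masses via Lemma \ref{WASSERSTEIN:1}, the convolution identity $P_t(x,\cdot)=P_t^0(x,\cdot)\ast P_t(0,\cdot)$ (your shift-coupling argument, adding a common sample of $P_t(0,\cdot)$ to an optimal coupling of the $P_t^0$-marginals, is a perfectly good substitute for the paper's Kantorovich-duality Lemma \ref{WASSERSTEIN}, since $d_{\log}$ depends only on the difference), and the bound $W_{\log}(P_t^0\delta_x,P_t^0\delta_{\widetilde x})\leq\log\left(1+Ke^{-\delta' t}s_0\right)$ obtained from the synchronous coupling, Proposition \ref{LEMMA:00} and Jensen, exactly as in \eqref{EQ:09}. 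The genuine gap is in the final ``log manipulation'', which you yourself flag as the main obstacle but do not actually close. In the regime $s_0>1$ your bound is $\log\left(1+Ke^{-\delta' t}s_0\right)\leq Ke^{-\delta' t}+w$ with $w=\log(1+s_0)$; the first summand is indeed controlled by $K'\min\{e^{-\delta t},w\}$ (using $w>\log 2$), but the bare summand $w$ is \emph{not} dominated by $K'\min\{e^{-\delta t},w\}+K'e^{-\delta t}w$: fix any $s_0>1$ and let $t\to\infty$; the target right-hand side tends to $0$ while $w$ stays bounded away from zero. So the case split does not yield the asserted estimate, and the slack $\delta<\delta'$ cannot repair this, because it only helps the exponentially small summand.

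Moreover, this step cannot be fixed by a smarter elementary inequality: the pointwise bound you would need, namely $\log(1+as_0)\leq K'\left(\min\{a,\log(1+s_0)\}+a\log(1+s_0)\right)$ uniformly in $a\in(0,K]$ and $s_0\geq 0$, is false — take $a\to 0$ and $s_0=e^{1/a^2}$, so the left-hand side is of order $1/a^2$ while the right-hand side is of order $1/a$. The paper at this point invokes its appendix inequality $\log(1+ab)\leq K\min\{\log(1+a),\log(1+b)\}+K\log(1+a)\log(1+b)$, which is exactly of this unobtainable form (and whose appendix proof factors $\log(e+a)$ out of the additive constant $\log(e^{-1})$, a step that requires $\log(e+a)\leq 1$); the same choice $a$ small, $b=e^{1/a^2}$ violates it. In fact the statement of Proposition \ref{PROP:04} in this form is itself problematic: for the deterministic affine process on $\R_+$ with $m=1$, $n=0$, $b=0$, $\beta=-1$ and no noise one has $W_{\log}(P_t\delta_x,P_t\delta_0)=\log(1+xe^{-t})$, and choosing $x=e^{2t}$ gives a left-hand side of order $t$ while $K\min\{e^{-\delta t},\log(1+x)\}+Ke^{-\delta t}\log(1+x)$ tends to $0$. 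So the regime ``$s_0$ super-exponentially large compared with $e^{\delta t}$'' that your additive bound discards is precisely where the claimed contraction cannot hold, and any correct argument must end with a weaker (e.g.\ purely $\log(1+Ke^{-\delta t}s_0)$-type) conclusion rather than the displayed mixed bound.
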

\begin{proof} Let $\left(P_{t}^{0}(x,\cdot)\right)_{t\ge0}$
be the transition semigroup with admissible parameters $(a,\alpha,b=0,\beta,m=0,\mu)$
given by Theorem \ref{EXISTENCE:AFFINE}. Take $x=(y,z),\widetilde{x}=(\widetilde{y},\widetilde{z})\in \R_{+}^{m}\times\R^{n}$
and let $X^{0}(x)=(Y^{0}(y),Z^{0}(x))$ and $X^{0}(\widetilde{x})=(Y^{0}(\widetilde{y}),Z^{0}(\widetilde{x}))$,
respectively, be the corresponding affine processes obtained from
\eqref{SDE} with admissible parameters $(a=0,\alpha,b=0,\beta,m=0,\mu)$.
Since $X_{t}^{0}(x)$ has law $P_{t}^{0}(x,\cdot)$ and $X_{t}^{0}(\widetilde{x})$
has law $P_{t}^{0}(\widetilde{x},\cdot)$, there exist by Proposition
\ref{LEMMA:00} constants $K,\delta>0$ such that
\begin{align*}
W_{1}(P_{t}^{0}(x,\cdot),P_{t}^{0}(\widetilde{x},\cdot)) & \leq\E\left(\1_{\{n>0\}}|Y_{t}^{0}(y)-Y_{t}^{0}(\widetilde{y})|^{1/2}+|X_{t}^{0}(x)-X_{t}^{0}(\widetilde{x})|\right)\\
 & \leq\1_{\{n>0\}}\left(\E(|Y_{t}^{0}(y)-Y_{t}^{0}(\widetilde{y})|)\right)^{1/2}+\E(|X_{t}^{0}(x)-X_{t}^{0}(\widetilde{x})|)\\
 & \leq Ke^{-\delta t}\left(\1_{\{n>0\}}|y-\widetilde{y}|^{1/2}+|x-\widetilde{x}|\right).
\end{align*}
Next observe that, for $u\in\mathcal{U}$, one has
\begin{align*}
\int\limits _{D}e^{\langle u,x'\rangle}P_{t}^{0}(x,dx')=\exp\left(\langle x,\psi(t,u)\rangle\right),\qquad\int\limits _{D}e^{\langle u,x'\rangle}P_{t}(0,dx')=\exp\left(\phi(t,u)\right).
\end{align*}
Combining this with \eqref{LAPLACE} proves $P_{t}(x,\cdot)=P_{t}^{0}(x,\cdot)\ast P_{t}(0,\cdot)$,
where $\ast$ denotes the convolution of measures on $D$. Let us
now prove the desired $\log$-estimate. Using Lemma \ref{WASSERSTEIN}
from the appendix and then the Jensen inequality for the concave function
$\R_{+}\ni a\longmapsto\log(1+a)$, gives for some generic constant
$K>0$
\begin{align}
\notag W_{\log}(P_{t}\delta_{x},P_{t}\delta_{\widetilde{x}}) & \leq W_{\log}(P_{t}^{0}\delta_{x},P_{t}^{0}\delta_{\widetilde{x}})
\\ \notag &\leq\log(1+W_{1}(P_{t}^{0}\delta_{x},P_{t}^{0}\delta_{\widetilde{x}}))
\\ \label{EQ:09} &\leq \log\left(1+Ke^{-\delta t}\left(\1_{\{n>0\}}|y-\widetilde{y}|^{1/2}+|x-\widetilde{x}|\right)\right)
\\ \notag &\leq K\min\{e^{-\delta t},\log(1+\1_{\{n>0\}}|y-\widetilde{y}|^{1/2}+|x-\widetilde{x}|)\}
\\ \notag & \ \ \ +Ke^{-\delta t}\log\left(1+\1_{\{n>0\}}|y-\widetilde{y}|^{1/2}+|x-\widetilde{x}|\right),
\end{align}
where we have used, for $a,b\geq0$, the elementary inequality
\begin{align*}
\log(1+ab) & {\color{magenta}{\normalcolor \leq}{\normalcolor K\min\{\log(1+a),\log(1+b)\}+K\log(1+a)\log(1+b)}}\\
 & \leq K\min\{a,\log(1+b)\}+Ka\log(1+b),
\end{align*}
which is proved in the appendix.
Applying now Lemma \ref{WASSERSTEIN:1} from the appendix gives for
any $H\in\mathcal{H}(\rho,\widetilde{\rho})$
\begin{align*}
W_{\log}(P_{t}\rho,P_{t}\widetilde{\rho}) & \leq\int\limits _{D\times D}W_{\log}(P_{t}\delta_{x},P_{t}\delta_{\widetilde{x}})H(dx,d\widetilde{x})\\
 & \leq K\int\limits _{D\times D}\min\left\{ e^{-\delta t},\log(1+\1_{\{n>0\}}|y-\widetilde{y}|^{1/2}+|x-\widetilde{x}|)\right\} H(dx,d\widetilde{x})\\
 & \ \ \ +Ke^{-\delta t}\int\limits _{D\times D}\log(1+\1_{\{n>0\}}|y-\widetilde{y}|^{1/2}+|x-\widetilde{x}|)H(dx,d\widetilde{x})\\
 & \leq K\min\left\{ e^{-\delta t},\int\limits _{D\times D}\log(1+\1_{\{n>0\}}|y-\widetilde{y}|^{1/2}+|x-\widetilde{x}|)H(dx,d\widetilde{x})\right\} \\
 & \ \ \ +Ke^{-\delta t}\int\limits _{D\times D}\log(1+\1_{\{n>0\}}|y-\widetilde{y}|^{1/2}+|x-\widetilde{x}|)H(dx,d\widetilde{x}).
\end{align*}
Choosing $H$ as the optimal coupling of $(\rho,\widetilde{\rho})$,
i.e.,
\[
W_{\log}(\rho,\widetilde{\rho})=\int\limits _{D\times D}\log(1+\1_{\{n>0\}}|y-\widetilde{y}|^{1/2}+|x-\widetilde{x}|)H(dx,d\widetilde{x}),
\]
proves the assertion. \end{proof} Based on previous proposition,
the proof of Theorem \ref{THEOREM:01} is easy. It is given below.
\begin{Lemma}\label{LEMMA:01}
Let $(P_{t})_{t\ge0}$ be the transition
semigroup with admissible parameters $(a,\alpha,b,\beta,\nu,\mu)$.
Suppose that $\beta$ has only eigenvalues with negative real parts,
and \eqref{EQ:03} is satisfied. Then $(P_t)_{t \geq 0}$ has a unique invariant distribution
$\pi$. Moreover, this distribution belongs to $\mathcal{P}_{\log}(D)$
and, for any $\rho\in\mathcal{P}_{\log}(D)$, one has \eqref{EQ:14}.
\end{Lemma}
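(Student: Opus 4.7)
The plan is to exploit the contraction-type estimate of Proposition \ref{PROP:04} to construct a unique invariant $\pi\in\mathcal{P}_{\log}(D)$ and then to read off the ergodicity bound \eqref{EQ:14} by specialising that estimate to $\widetilde{\rho}=\pi$, so that $P_t\pi=\pi$ removes the second distribution from the right-hand side. The moment bound of Proposition \ref{PROP:MOMENT1}(b) ensures that $P_t$ preserves $\mathcal{P}_{\log}(D)$ and provides the seed quantities needed for a Cauchy argument; $(\mathcal{P}_{\log}(D),W_{\log})$ is complete by general theory of Wasserstein distances.

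For existence, I would fix some $t_0>0$ and study the sequence $\rho_n:=P_{nt_0}\delta_0\in\mathcal{P}_{\log}(D)$. Applying Proposition \ref{PROP:04} to $\rho=P_{t_0}\delta_0$, $\widetilde{\rho}=\delta_0$ at time $nt_0$, together with $\min\{e^{-n\delta t_0},W_{\log}(P_{t_0}\delta_0,\delta_0)\}\leq e^{-n\delta t_0}$, gives
\[
W_{\log}(\rho_{n+1},\rho_n)\leq Ke^{-n\delta t_0}\bigl(1+W_{\log}(P_{t_0}\delta_0,\delta_0)\bigr).
\]
The seed term $W_{\log}(P_{t_0}\delta_0,\delta_0)$ is bounded by $C(1+\E(\log(1+|X_{t_0}|)))$ with $X_0=0$, which is at most $Ce^{Ct_0}$ by Proposition \ref{PROP:MOMENT1}(b). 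Summability in $n$ then shows $(\rho_n)$ is Cauchy in $(\mathcal{P}_{\log}(D),W_{\log})$, so it converges to some $\pi\in\mathcal{P}_{\log}(D)$.

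I would then promote this to continuous time: writing $t=nt_0+r$ with $r\in[0,t_0)$, Proposition \ref{PROP:04} bounds $W_{\log}(P_t\delta_0,P_{nt_0}\delta_0)$ by $Ke^{-n\delta t_0}(1+\sup_{r\in[0,t_0]}W_{\log}(P_r\delta_0,\delta_0))$, the latter supremum being finite by the same moment estimate. Hence $W_{\log}(P_t\delta_0,\pi)\to 0$ as $t\to\infty$. Invariance $P_s\pi=\pi$ follows from $W_{\log}(P_s\pi,\pi)\leq W_{\log}(P_s\pi,P_{s+t}\delta_0)+W_{\log}(P_{s+t}\delta_0,\pi)$, bounding the first summand by $K(1+e^{-\delta s})W_{\log}(\pi,P_t\delta_0)$ via Proposition \ref{PROP:04}, and letting $t\to\infty$. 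Uniqueness follows by applying Proposition \ref{PROP:04} to two invariant measures $\pi,\widetilde{\pi}$: setting $A:=W_{\log}(\pi,\widetilde{\pi})$ one obtains $A\leq Ke^{-\delta t}(1+A)$ for every $t\geq 0$, forcing $A=0$. Finally, \eqref{EQ:14} is exactly Proposition \ref{PROP:04} with $\widetilde{\rho}=\pi$.

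The delicate point is that Proposition \ref{PROP:04} is not a genuine contraction on $\mathcal{P}_{\log}(D)$: the additive term $Ke^{-\delta t}$ and the factor $K$ (potentially larger than $1$) in front of the minimum preclude a direct Banach fixed-point argument. Everything must be organised around the exponentially decaying prefactor $e^{-n\delta t_0}$ in the telescoping differences, with the moment bound of Proposition \ref{PROP:MOMENT1}(b) controlling both the nondecaying seed $W_{\log}(P_{t_0}\delta_0,\delta_0)$ and the fractional-time remainder in the continuous-time extension.
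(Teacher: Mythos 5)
Your existence and invariance arguments, the membership $\pi\in\mathcal{P}_{\log}(D)$, and the derivation of \eqref{EQ:14} by taking $\widetilde{\rho}=\pi$ in Proposition \ref{PROP:04} follow essentially the paper's own route (the paper telescopes $P_{k}\rho$ along integer times for an arbitrary $\rho\in\mathcal{P}_{\log}(D)$ instead of $P_{nt_0}\delta_0$, and verifies invariance by a three-term triangle inequality at integer times, so your continuous-time bridging via $t=nt_0+r$ is a harmless variant). These parts are fine, including the use of Proposition \ref{PROP:MOMENT1}(b) to control the seed term and to see that $P_t$ preserves $\mathcal{P}_{\log}(D)$.

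The genuine gap is in the uniqueness step. Your argument ``set $A=W_{\log}(\pi,\widetilde{\pi})$ and deduce $A\le Ke^{-\delta t}(1+A)$ for all $t$'' presupposes that the competing invariant distribution $\widetilde{\pi}$ lies in $\mathcal{P}_{\log}(D)$: Proposition \ref{PROP:04} is stated only for $\rho,\widetilde{\rho}\in\mathcal{P}_{\log}(D)$, and without a finite log-moment the quantity $A$ may be $+\infty$, in which case the inequality yields nothing. The lemma, however, asserts uniqueness among \emph{all} invariant probability measures on $D$, and an arbitrary invariant measure is not known a priori to have a finite log-moment --- that property is obtained only for the particular $\pi$ constructed as the limit. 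So, as written, you have proved uniqueness only within $\mathcal{P}_{\log}(D)$. The paper closes exactly this hole by passing to the bounded metric $d_{\log}\wedge 1$: it defines $W_{\log}^{\leq 1}$ as in \eqref{WASSERSTEIN:2} with $d_{\log}$ replaced by $d_{\log}\wedge 1$ (which is finite for arbitrary probability measures), uses the pointwise estimate \eqref{EQ:09} from the proof of Proposition \ref{PROP:04} together with the convexity Lemma \ref{WASSERSTEIN:1} and the invariance of $\pi_0,\pi_1$ to obtain
\[
W_{\log}^{\leq 1}(\pi_0,\pi_1)\leq \int\limits_{D\times D}\min\left\{1,\log\left(1+2Ke^{-\delta t}|x-\widetilde{x}|\right)\right\}H(dx,d\widetilde{x})
\]
for an arbitrary coupling $H\in\mathcal{H}(\pi_0,\pi_1)$, and then lets $t\to\infty$ by dominated convergence. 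You should either adopt this truncation argument or supply a separate proof that every invariant distribution has a finite log-moment; without one of these, the uniqueness claim in the statement is not fully established.
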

\begin{proof}
Let us first prove existence of an invariant distribution $\widetilde{\pi} \in \mathcal{P}_{\log}(D)$.
Observe that, by Proposition \ref{PROP:MOMENT1}, we easily
deduce that $P_{t}\mathcal{P}_{\log}(D)\subset\mathcal{P}_{\log}(D)$,
for any $t\geq0$. Fix any $\rho\in\mathcal{P}_{\log}(D)$ and let
$k,l\in\N$ with $k>l$. Then
\begin{align*}
W_{\log}(P_{k}\rho,P_{l}\rho) & \leq\sum\limits _{s=l}^{k-1}W_{\log}(P_{s}P_{1}\rho,P_{s}\rho)\\
 & \leq K\sum\limits _{s=l}^{k-1}\min\left\{ e^{-\delta s},W_{\log}(P_{1}\rho,\rho)\right\} +K\sum\limits _{s=l}^{k-1}e^{-s\delta}W_{\log}(P_{1}\rho,\rho).
\end{align*}
Since the right-hand side tends to zero as $k,l\to\infty$, we conclude
that $(P_{k}\rho)_{k\in\N}$ is a Cauchy sequence in $(\mathcal{P}_{\log}(D),W_{\log})$.
In particular, there exists a limit $\pi \in \mathcal{P}_{\log}(D)$,
i.e., $W_{\log}(P_{k}\rho,\pi)\longrightarrow0$ as $k\to\infty$.
Let us show that $\pi$ is an invariant distribution for
$P_{t}$. Indeed, take $h\geq0$, then
\begin{align*}
W_{\log}(P_{h}\pi,\pi) & \leq W_{\log}(P_{h}\pi,P_{h}P_{k}\rho)+W_{\log}(P_{k}P_{h}\rho,P_{k}\rho)+W_{\log}(P_{k}\rho,\pi)\\
 & \leq K\min\left\{ e^{-\delta h},W_{\log}(\pi,P_{k}\rho)\right\} +Ke^{-\delta h}W_{\log}(\pi,P_{k}\rho)\\
 & \ \ \ +K\min\left\{ e^{-\delta k},W_{\log}(P_{h}\rho,\rho)\right\} +Ke^{-\delta k}W_{\log}(P_{h}\rho,\rho)+W_{\log}(P_{k}\rho,\pi).
\end{align*}
Since $W_{\log}(P_{k}\rho,\pi)\longrightarrow0$ as $k\to\infty$,
we conclude that all terms tend to zero. Hence $W_{\log}(P_{h}\pi,\pi)=0$,
i.e., $P_{h}\pi = \pi$, for all $h\geq0$. Next we prove that $\pi$ is the unique invariant distribution.
Let $\pi_0,\pi_1$ be any two invariant distributions and
define $W_{\log}^{\leq 1}$ as in \eqref{WASSERSTEIN:2}
with $d_{\log}$ replaced by $d_{\log} \wedge 1$.
Then we obtain, for any $t \geq 0$ and all $x, \widetilde{x} \in D$, by the proof of Proposition \ref{PROP:04}
(see \eqref{EQ:09})
\begin{align*}
 W_{\log}^{\leq 1}(P_t(x,\cdot), P_t(\widetilde{x}, \cdot))
 &\leq 1 \wedge W_{\log}(P_t(x,\cdot), P_t(\widetilde{x}, \cdot))
 \\ &\leq 1 \wedge \log \left(1 + K e^{-\delta t}\left( \1_{\{n > 0\}}|y - \widetilde{y}| + |x-\widetilde{x}| \right) \right).
\end{align*}
Fix any $H \in \mathcal{H}(\pi_0, \pi_1)$,
then using the invariance of $\pi_0, \pi_1$ together
with the convexity of the Wasserstein distance gives
\begin{align*}
 W_{\log}^{\leq 1}(\pi_0, \pi_1) &= W_{\log}^{\leq 1}(P_t \pi_0, P_t \pi_1)
 \\ &\leq \int \limits_{D \times D}W_{\log}^{\leq 1}(P_t(x,\cdot), P_t(\widetilde{x}, \cdot))H(dx,d\widetilde{x})
 \\ &\leq \int \limits_{D \times D} \min\{1,  \log(1 + 2Ke^{- \delta t}|x - \widetilde{x}|) H(dx,d\widetilde{x}).
\end{align*}
By dominated convergence we deduce that the right-hand side tends to zero as $t \to \infty$ and hence $\pi_0 = \pi_1$. The last assertion can now be deduced from
\begin{align*}
W_{\log}(P_{t}\rho,\pi)=W_{\log}(P_{t}\rho,P_{t}\pi)\leq K\min\left\{ e^{-\delta t},W_{\log}(\rho,\pi)\right\} +Ke^{-\delta t}W_{\log}(\rho,\pi),
\end{align*}
where we have first used the invariance of $\pi$ and then Proposition
\ref{PROP:04}. \end{proof}

\subsection{The $\kappa$-Wasserstein estimate}

As before, we start with an estimate with respect
to the Wasserstein distance $W_{\kappa}$.
\begin{Proposition}\label{PROP:02}
Let $(P_{t})_{t\ge0}$ be the transition semigroup with admissible
parameters $(a,\alpha,b,\beta,\nu,\mu)$. Suppose that $\beta$ has
only eigenvalues with negative real parts, and \eqref{FIRST:MOMENT}
is satisfied for some $\kappa\in(0,1]$. Then there exist constants
$K,\delta>0$ such that, for any $\rho,\widetilde{\rho}\in\mathcal{P}_{\kappa}(D)$,
one has
\begin{align*}
W_{\kappa}(P_{t}\rho,P_{t}\widetilde{\rho})\leq Ke^{-\delta t}W_{\kappa}(\rho,\widetilde{\rho}),\qquad t\geq0.
\end{align*}
\end{Proposition}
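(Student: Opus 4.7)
The plan is to mirror the proof of Proposition \ref{PROP:04}, replacing the concave function $a \mapsto \log(1+a)$ by $a \mapsto a^{\kappa}$. First I would verify that $P_t$ preserves $\mathcal{P}_{\kappa}(D)$: the admissibility of $\mu=(\mu_1,\dots,\mu_m)$ forces $\int_{|\xi|>1}|\xi|\mu_i(d\xi) < \infty$, which for $\kappa \in (0,1]$ dominates $\int_{|\xi|>1}|\xi|^{\kappa}\mu_i(d\xi)$; combined with hypothesis \eqref{FIRST:MOMENT} on $\nu$, Proposition \ref{PROP:MOMENT1}(a) gives $P_t\rho \in \mathcal{P}_{\kappa}(D)$ whenever $\rho \in \mathcal{P}_{\kappa}(D)$, so both sides of the asserted inequality are well-defined.

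By the convexity of the Wasserstein distance (Lemma \ref{WASSERSTEIN:1}), it suffices to establish the pointwise contraction
\[
W_{\kappa}(P_t\delta_x, P_t\delta_{\widetilde{x}}) \leq K e^{-\delta t}\left(\1_{\{n>0\}}|y-\widetilde{y}|^{1/2} + |x-\widetilde{x}|\right)^{\kappa}
\]
for all $x = (y,z),\widetilde{x} = (\widetilde{y},\widetilde{z}) \in D$. Indeed, the right-hand side equals $Ke^{-\delta t} d_{\kappa}(x,\widetilde{x})$, so integrating against an optimal coupling of $(\rho,\widetilde{\rho})$ for $W_{\kappa}$ yields the claim.

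To prove the pointwise bound, let $(P_t^0)_{t \geq 0}$ denote the transition semigroup of the affine process with admissible parameters $(a=0, \alpha, b=0, \beta, \nu=0, \mu)$. From \eqref{LAPLACE} one obtains the factorisation $P_t(x,\cdot) = P_t^0(x,\cdot) \ast P_t(0,\cdot)$, exactly as in the proof of Proposition \ref{PROP:04}. Lemma \ref{WASSERSTEIN} then yields $W_{\kappa}(P_t\delta_x, P_t\delta_{\widetilde{x}}) \leq W_{\kappa}(P_t^0\delta_x, P_t^0\delta_{\widetilde{x}})$. Since $d_{\kappa} = d_1^{\kappa}$ and $a \mapsto a^{\kappa}$ is concave for $\kappa \in (0,1]$, Jensen's inequality applied to any coupling and then passing to the infimum over couplings gives $W_{\kappa}(P_t^0\delta_x, P_t^0\delta_{\widetilde{x}}) \leq (W_1(P_t^0\delta_x, P_t^0\delta_{\widetilde{x}}))^{\kappa}$. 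The subprocess inherits the spectral hypothesis on $\beta$, and condition \eqref{EQ:08} holds trivially because $\nu = 0$; hence Proposition \ref{LEMMA:00} applies to $(P_t^0)_{t \geq 0}$ and yields $W_1(P_t^0\delta_x, P_t^0\delta_{\widetilde{x}}) \leq K' e^{-\delta' t}\left(\1_{\{n>0\}}|y-\widetilde{y}|^{1/2} + |x-\widetilde{x}|\right)$. Raising this bound to the $\kappa$-th power completes the pointwise estimate with constants $K = (K')^{\kappa}$ and $\delta = \delta' \kappa$.

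The main obstacle has in fact already been overcome in Proposition \ref{LEMMA:00}, which supplies the $W_1$-contraction for the subprocess via the comparison principle for the CBI component and the explicit It\^o decomposition of the OU-type component. The present result is essentially an assembly step, combining the convolution splitting $P_t = P_t^0 \ast P_t(0,\cdot)$ with the convexity of $W_{\kappa}$ and the Jensen inequality. It is exactly for this decomposition to work that the hypothesis \eqref{FIRST:MOMENT} is needed only on $\nu$ (to guarantee $P_t\rho \in \mathcal{P}_{\kappa}(D)$), while no moment condition on $\nu$ is required for the $P_t^0$-contraction itself.
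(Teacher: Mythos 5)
Your proposal is correct and follows essentially the same route as the paper: the convolution factorisation $P_t(x,\cdot)=P_t^0(x,\cdot)\ast P_t(0,\cdot)$ with the subprocess having state-independent part switched off, Lemma \ref{WASSERSTEIN} to discard the convolution factor, Jensen's inequality to pass from $W_\kappa$ to $(W_1)^\kappa$, the $W_1$-contraction from Proposition \ref{LEMMA:00} (available since \eqref{EQ:08} is trivial for $\nu=0$), and finally the convexity Lemma \ref{WASSERSTEIN:1} with an optimal coupling of $(\rho,\widetilde{\rho})$. Your additional check that $P_t$ preserves $\mathcal{P}_\kappa(D)$ via Proposition \ref{PROP:MOMENT1}(a) is a welcome explicit remark that the paper handles separately in Section 5.
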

\begin{proof}
Let $\left(P_{t}^{0}(x,\cdot)\right)_{t\ge0}$
be the transition semigroup with admissible parameters $(a=0,\alpha,b=0,\beta,m=0,\mu)$
given by Theorem \ref{EXISTENCE:AFFINE}. Arguing as in the proof
of Proposition \ref{PROP:04}, we obtain
\begin{align}
W_{1}(P_{t}^{0}(x,\cdot),P_{t}^{0}(\widetilde{x},\cdot))\leq Ke^{-\delta t}\left(\1_{\{n>0\}}|y-\widetilde{y}|^{1/2}+|x-\widetilde{x}|\right),\label{EQ:13}
\end{align}
and $P_{t}(x,\cdot)=P_{t}^{0}(x,\cdot)\ast P_{t}(0,\cdot)$.
Then we obtain from Lemma \ref{WASSERSTEIN} from the appendix
\begin{align*}
W_{\kappa}(P_{t}\delta_{x},P_{t}\delta_{\widetilde{x}})
&\leq W_{\kappa}(P_{t}^{0}\delta_{x},P_{t}^{0}\delta_{\widetilde{x}})
\\ &\leq\left(W_{1}(P_{t}^{0}\delta_{x},P_{t}^{0}\delta_{\widetilde{x}})\right)^{\kappa}\leq K^{\kappa}e^{-\delta\kappa t}\left(\1_{\{n>0\}}|y-\widetilde{y}|^{1/2}+|x-\widetilde{x}|\right)^{\kappa},
\end{align*}
where the second inequality follows from the Jensen inequality and
the third is a consequence of \eqref{EQ:13}. Using now Lemma \ref{WASSERSTEIN:1}
from the appendix, we conclude that
\begin{align*}
W_{\kappa}(P_{t}\rho,P_{t}\widetilde{\rho}) & \leq\inf\limits _{H\in\mathcal{H}(\rho,\widetilde{\rho})}\int\limits _{D\times D}W_{\kappa}(P_{t}\delta_{x},P_{t}\delta_{\widetilde{x}})H(dx,d\widetilde{x})\\
 & \leq K^{\kappa}e^{-\delta\kappa t}\inf\limits _{H\in\mathcal{H}(\rho,\widetilde{\rho})}\int\limits _{D\times D}\left(\1_{\{n>0\}}|y-\widetilde{y}|+|x-\widetilde{x}|\right)^{\kappa}H(dx,d\widetilde{x})\\
 & =K^{\kappa}e^{-\delta\kappa t}W_{\kappa}(\rho,\widetilde{\rho}).
\end{align*}
This proves the assertion. \end{proof} Based on previous proposition,
the proof of the $W_{\kappa}$-estimate in Theorem \ref{THEOREM:01}
can be deduced by exactly the same arguments as in Lemma \ref{LEMMA:01}.
So Theorem \ref{THEOREM:01} is proved.

\section{Appendix}

\subsection{Moment estimates for $V_{1}$ and $V_{2}$}

In this section we prove \eqref{EQ:10}. \begin{Lemma} Suppose that
the same conditions as in Proposition \ref{PROP:MOMENT1} (a) are
satisfied. Then there exists a constant $C=C_{\kappa}>0$ such that
\[
\mathcal{A}_{1}(x)\leq CV_{1}(x),\qquad x=(y,z)\in \R_{+}^{m}\times\R^{n}.
\]
\end{Lemma}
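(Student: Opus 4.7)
The plan is to decompose $\mathcal{A}_{1}$ into its four constituent pieces---the drift, the quadratic variation (both $a$- and $\alpha$-parts), the $\nu$-integral and the $\mu_{i}$-integral---and to bound each piece separately by a constant multiple of $V_{1}(h)$. The fundamental analytic ingredients are the pointwise estimates $|\nabla V_{1}(h)| \le \kappa(1+|h|^{2})^{(\kappa-1)/2}$ and $\|D^{2}V_{1}(h)\| \le C_{\kappa}(1+|h|^{2})^{\kappa/2-1}$, obtained from $V_{1}(h)=(1+|h|^{2})^{\kappa/2}$ by direct differentiation, together with the elementary Lyapunov-type identity $|h|(1+|h|^{2})^{(\kappa-1)/2}\le V_{1}(h)$.

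The drift and diffusion terms are then immediate: the bound $\langle\widetilde b+\beta h,\nabla V_{1}(h)\rangle\le C V_{1}(h)$ follows from $|\widetilde b+\beta h|\le C(1+|h|)$ combined with the gradient bound; the $a$-diffusion piece is controlled directly by $(1+|h|^{2})^{\kappa/2-1}\le V_{1}(h)$; and for the $\alpha$-diffusion piece the factor $\sum_{i}h_{i}\le(1+|h|^{2})^{1/2}$ combined with the Hessian bound yields $(1+|h|^{2})^{(\kappa-1)/2}\le V_{1}(h)$, valid for every $\kappa>0$.

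The $\nu$-integral is split at $|\xi|=1$. On $\{|\xi|\le 1\}$ the second-order Taylor remainder is bounded by $C|\xi|^{2}(1+|h|^{2})^{\kappa/2-1}$, which is integrable against $\nu$ by admissibility and yields $CV_{1}(h)$. On $\{|\xi|>1\}$ the elementary inequality $V_{1}(h+\xi)\le C(V_{1}(h)+|\xi|^{\kappa})$, valid for $\kappa\le 2$ by subadditivity of $t\mapsto t^{\kappa/2}$, combined with the assumed moment $\int_{|\xi|>1}|\xi|^{\kappa}\nu(d\xi)<\infty$, closes the estimate.

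The main obstacle lies in the $\mu_{i}$-integral because its prefactor $h_{i}$ can grow like $(1+|h|^{2})^{1/2}$, which exceeds $V_{1}(h)$ when $\kappa<1$; naive bounds do not close. I handle it by partitioning the $\xi$-integral at $|\xi|=\max(|h|/2,1)$. On the inner region $|h+s\xi|$ stays comparable to $|h|$ for all $s\in[0,1]$, so the Taylor remainder is bounded by $C|\xi|^{2}(1+|h|^{2})^{\kappa/2-1}$; after summing over $i$, the contribution from $\{|\xi|\le 1\}$ uses $\int_{|\xi|\le 1}|\xi|^{2}\mu_{i}(d\xi)<\infty$, and the contribution from $\{1<|\xi|\le|h|/2\}$ reduces via $|\xi|^{2}\le(|h|/2)|\xi|$ to the admissibility moment $\int_{|\xi|>1}|\xi|\mu_{i}(d\xi)<\infty$, both producing $CV_{1}(h)$. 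On the outer region $\{|\xi|>\max(|h|/2,1)\}$ one has $|h|\le 2|\xi|$, and the crude bound $|V_{1}(h+\xi)-V_{1}(h)-\langle\xi,\nabla V_{1}(h)\rangle|\le C|\xi|^{\kappa}+C|\xi|(1+|h|^{2})^{(\kappa-1)/2}$ applies. For $\kappa\ge 1$ the estimate $h_{i}\le V_{1}(h)$ combined with $\int_{|\xi|>1}|\xi|^{\kappa}\mu_{i}(d\xi)<\infty$ closes the bound. For $\kappa\le 1$ the key extra step is the interpolation $|\xi|^{\kappa}\le|h|^{\kappa-1}|\xi|$ valid whenever $|\xi|\ge|h|$, which converts the $\kappa$-moment of $\mu_{i}$ into its first moment (finite by admissibility) and produces $h_{i}|\xi|^{\kappa}\le|h|^{\kappa}|\xi|\le V_{1}(h)|\xi|$ upon summation; the remaining sub-region $\max(|h|/2,1)<|\xi|\le|h|$ is handled using $|\xi|^{\kappa}\le|h|^{\kappa}\le V_{1}(h)$ together with Markov's inequality $\mu_{i}(\{|\xi|>|h|/2\})\le C|h|^{-1}\int_{|\xi|>1}|\xi|\mu_{i}(d\xi)$. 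Collecting all contributions yields the desired bound $\mathcal{A}_{1}(h)\le CV_{1}(h)$.
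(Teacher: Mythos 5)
Your proof is correct, and its frame (the four-way decomposition of $\mathcal{A}_{1}$, the gradient and Hessian bounds for $V_{1}$, the second-order Taylor estimate for small jumps, and the use of the $\kappa$-moment of $\nu$ for its big jumps) coincides with the paper's. Where you genuinely diverge is in the state-dependent jump integral, which is indeed the only delicate term because of the prefactor $x_{i}$. The paper closes it globally, with no partition of the jump size relative to $|x|$: for $|\xi|>1$ the mean value theorem yields $V_{1}(x+\xi)-V_{1}(x)\leq C\left(1+|\xi|+|\xi|^{\kappa}\right)(1+|x|^{2})^{\frac{\kappa-1}{2}}$, i.e.\ the increment is bounded with one power of $(1+|x|^{2})^{1/2}$ less than $V_{1}$, so the factor $x_{i}\leq(1+|x|^{2})^{1/2}$ is absorbed uniformly in $\kappa$; for $|\xi|\leq1$ it exploits that $\mu_{i}$ is supported on $D$, hence $\xi_{i}\geq0$ and $x_{i}\leq|x+s\xi|$ up to a constant, which absorbs $x_{i}$ into the Hessian weight $(1+|x+s\xi|^{2})^{\frac{\kappa-2}{2}}$. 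You instead partition the $\mu_{i}$-jumps at $|\xi|=\max(|x|/2,1)$, use comparability of $|x+s\xi|$ with $|x|$ on the inner region, and close the case $\kappa<1$ on the outer region via the interpolation $|\xi|^{\kappa}\leq|x|^{\kappa-1}|\xi|$ for $|\xi|\geq|x|$ together with Markov's inequality on the intermediate band; this is more case-heavy than the paper's two-line absorption, but it is equally elementary and has the mild advantage of never invoking the geometric fact that the $I$-components of the $\mu_{i}$-jumps are nonnegative. One small repair: your inequality $V_{1}(x+\xi)\leq C(V_{1}(x)+|\xi|^{\kappa})$ for the big jumps of $\nu$ is justified only for $\kappa\leq2$, whereas Proposition \ref{PROP:MOMENT1} (a) allows any $\kappa>0$; for $\kappa>2$ the same bound follows from convexity, $(a+b)^{\kappa/2}\leq2^{\kappa/2-1}(a^{\kappa/2}+b^{\kappa/2})$, so nothing breaks.
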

\begin{proof}
Observe that $\nabla V_{1}(x)= \kappa x(1+|x|^{2})^{\frac{\kappa-2}{2}}$.
Using $|x|\leq(1+|x|^{2})^{1/2}$ gives $|\nabla V_{1}(x)|\leq \kappa(1+|x|^{2})^{\frac{\kappa-1}{2}}$,
and hence we obtain for some generic constant $C=C_{\kappa}>0$
\[
(\widetilde{b}+\beta x,\nabla V_{1}(x))\leq C\left(1+|x|\right)|\nabla V_{1}(x)|\leq CV_{1}(x).
\]
For the second order term we first observe that, for $k,l\in\{1,\dots,d\}$,
\[
\frac{\partial^{2}V_{1}(x)}{\partial x_{k}\partial x_{l}}= \kappa(\kappa-2)x_{k}x_{l}(1+|x|^{2})^{\frac{\kappa-4}{2}}+\delta_{kl}\kappa(1+|x|^{2})^{\frac{\kappa-2}{2}},
\]
where $\delta_{kl}$ denotes the Kronecker-Delta symbol. Using $x_{k}x_{l}\leq\frac{x_{k}^{2}+x_{l}^{2}}{2}\leq|x|^{2}\leq(1+|x|^{2})$
gives $\left|\frac{\partial^{2}V_{1}(x)}{\partial x_{k}\partial x_{l}}\right|\leq C(1+|x|^{2})^{\frac{\kappa-2}{2}}$.
This implies that
\begin{align*}
\sum\limits _{k,l=1}^{d}\left(a_{kl}+\sum\limits _{i=1}^{m}\alpha_{i,kl}x_{i}\right)\frac{\partial^{2}V_{1}(x)}{\partial x_{k}\partial x_{l}}\leq C(1+|x|)(1+|x|^{2})^{\frac{\kappa-2}{2}}\leq CV_{1}(x).
\end{align*}
Let us now estimate the integrals against $m$ and $\mu_{1},\dots,\mu_{m}$.
Consider first the case $|\xi|>1$. The mean value theorem gives
\begin{align*}
V_{1}(x+\xi)-V_{1}(x) & =\int\limits _{0}^{1}\left\langle \xi,\nabla V_{1}(x+t\xi)\right\rangle dt\\
 & = \kappa\int\limits _{0}^{1}\left\langle \xi,x+t\xi\right\rangle (1+|x+t\xi|^{2})^{\frac{\kappa-2}{2}}dt
 \leq \kappa|\xi|\int\limits _{0}^{1}(1+|x+t\xi|^{2})^{\frac{\kappa-1}{2}}dt,
\end{align*}
where we have used $\left\langle \xi,x+t\xi\right\rangle \leq|\xi||x+t\xi|\leq|\xi|(1+|x+t\xi|^{2})^{1/2}$
in the last inequality. If $\kappa>1$, then
\begin{align*}
|\xi|(1+|x+t\xi|^{2})^{\frac{\kappa-1}{2}} & \leq C|\xi|(1+|x|^{2}+|\xi|^{2})^{\frac{\kappa-1}{2}}\\
 & \leq C|\xi|(1+|\xi|^{2})^{\frac{\kappa-1}{2}}(1+|x|^{2})^{\frac{\kappa-1}{2}}\leq C(1+|\xi|^{2})^{\kappa/2}(1+|x|^{2})^{\frac{\kappa-1}{2}}.
\end{align*}
If $\kappa\in(0,1]$, then $|\xi|(1+|x+t\xi|^{2})^{\frac{\kappa-1}{2}}\leq|\xi|$.
In any case, we obtain, for $|\xi|>1$,
\begin{align*}
V_{1}(x+\xi)-V_{1}(x) & \leq\1_{(0,1]}(\kappa)C|\xi|+\1_{(1,\infty)}(\kappa)(1+|\xi|^{2})^{\kappa/2}(1+|x|^{2})^{\frac{\kappa-1}{2}}\\
 & \leq C\left(1+|\xi|+|\xi|^{\kappa}\right)(1+|x|^{2})^{\frac{\kappa-1}{2}}.
\end{align*}
Using $\left\langle \xi,\nabla V_{1}(x)\right\rangle \leq|\xi||\nabla V_{1}(x)|\leq C|\xi|(1+|x|^{2})^{\frac{\kappa-1}{2}}$
and

\[
V_{1}(x+\xi)-V_{1}(x)\leq V_{1}(x+\xi)\leq C(1+|x|^{2}+|\xi|^{2})^{\kappa/2}\leq CV_{1}(x)(1+|\xi|^{2})^{\kappa/2},
\]
for the integral against $\nu$, gives
\begin{align*}
 & \ \int\limits _{|\xi|>1}\left(V_{1}(x+\xi)-V_{1}(x)\right)\nu(d\xi)+\sum\limits _{i=1}^{m}x_{i}\int\limits _{|\xi|>1}\left(V_{1}(x+\xi)-V_{1}(x)-\left\langle \xi,\nabla V_{1}(x)\right\rangle \right)\mu_{i}(d\xi)\\
 & \leq CV_{1}(x)\int\limits _{|\xi|>1}(1+|\xi|^{2})^{\kappa/2}\nu(d\xi)+C(1+|x|^{2})^{\frac{\kappa-1}{2}}\sum\limits _{i=1}^{m}x_{i}\int\limits _{|\xi|>1}\left(1+|\xi|+|\xi|^{\kappa}\right)\mu_{i}(d\xi)\\
 & \leq CV_{1}(x)\left(\int\limits _{|\xi|>1}\left(1+|\xi|^{\kappa}\right)\nu(d\xi)+\sum\limits _{i=1}^{m}\int\limits _{|\xi|>1}\left(1+|\xi|+|\xi|^{\kappa}\right)\mu_{i}(d\xi)\right),
\end{align*}
where we have used $x_{i}\leq|x|\leq(1+|x|^{2})^{1/2}$, $i\in\{1,\dots,m\}$.
It remains to estimate the corresponding integrals for $|\xi|\leq1$.
Applying twice the mean value theorem gives
\begin{align}
V_{1}(x+\xi)-V_{1}(x)-\left\langle \xi,\nabla V_{1}(x)\right\rangle  & =\int\limits _{0}^{1}\left\{ \left\langle \xi,\nabla V_{1}(x+t\xi)\right\rangle -\left\langle \xi,\nabla V_{1}(x)\right\rangle \right\} dt\nonumber \\
 & =\int\limits _{0}^{1}\int\limits _{0}^{t}\sum\limits _{k,l=1}^{d}\frac{\partial^{2}V_{1}(x+s\xi)}{\partial x_{k}\partial x_{l}}\xi_{k}\xi_{l}dsdt\nonumber \\
 & \leq C|\xi|^{2}\int\limits _{0}^{1}\int\limits _{0}^{t}(1+|x+s\xi|^{2})^{\frac{\kappa-2}{2}}dsdt,\label{eq: double mean value theorem}
\end{align}
where we have used $\xi_{k}\xi_{l}\leq\frac{\xi_{k}^{2}+\xi_{l}^{2}}{2}\leq|\xi|^{2}$.
Using, for $i\in I$ and $|\xi|\le1$,
\begin{align*}
(1+x_{i})(1+|x+s\xi|^{2})^{\frac{\kappa-2}{2}} & \leq(1+|y+s\xi_{I}|^{2})^{1/2}(1+|x+s\xi|^{2})^{\frac{\kappa-2}{2}}\\
 & \leq(1+|x+s\xi|^{2})^{\frac{\kappa-1}{2}}\\
 & \leq(1+|x+s\xi|^{2})^{\kappa/2}\leq CV_{1}(x),
\end{align*}
we conclude that
\begin{align*}
 & \ \int\limits _{|\xi|\leq1}\left(V_{1}(x+\xi)-V_{1}(x)-\left\langle \xi,\nabla V_{1}(x)\right\rangle \right)\nu(d\xi)\\
 & \ \ \ +\sum\limits _{i=1}^{m}x_{i}\int\limits _{|\xi|\leq1}\left(V_{1}(x+\xi)-V_{1}(x)-\left\langle \xi,\nabla V_{1}(x)\right\rangle \right)\mu_{i}(d\xi)\\
 & \leq CV_{1}(x)\left(\int\limits _{|\xi|\leq1}|\xi|^{2}\nu(d\xi)+ \int\limits _{|\xi|\leq1}|\xi|^{2}\mu_{i}(d\xi)\right).
\end{align*}
Collecting all estimates proves the desired estimate for $\mathcal{A}_{1}$.
\end{proof} Let us now prove the desired estimate for $\mathcal{A}_{2}$.
\begin{Lemma} Suppose that the same conditions as in Proposition
\ref{PROP:MOMENT1} (b) are satisfied. Then there exists a constant
$C>0$ such that
\[
\mathcal{A}_{2}(x)\leq C\left(1+V_{2}(x)\right),\qquad x\in D.
\]
\end{Lemma}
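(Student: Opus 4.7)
The plan is to estimate each of the four ingredients of $\mathcal{A}_2(h)$ separately, using the explicit formulas $\nabla V_2(h)=2h/(1+|h|^2)$ (so $|\nabla V_2(h)|\leq C/(1+|h|)$) and $|\partial_k\partial_l V_2(h)|\leq C/(1+|h|^2)$. The drift term $\langle\widetilde b+\beta h,\nabla V_2(h)\rangle$ and the diffusion term $\sum_{k,l}(a_{kl}+\sum_i\alpha_{i,kl}h_i)\partial_k\partial_l V_2(h)$ are both bounded by a constant, since their coefficients grow at most linearly in $|h|$ while the derivatives of $V_2$ decay as $1/(1+|h|)$ and $1/(1+|h|^2)$, respectively. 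For the state-independent jump term against $\nu$, the compensated part on $|\xi|\leq 1$ is controlled by a Taylor expansion giving integrand $\leq C|\xi|^2/(1+|h|^2)$, which integrates using $\int_{|\xi|\leq 1}|\xi|^2\nu(d\xi)<\infty$; on $|\xi|>1$ the submultiplicative inequality $1+|h+\xi|^2\leq 2(1+|h|^2)(1+|\xi|^2)$ yields $V_2(h+\xi)-V_2(h)\leq V_2(\xi)+\log 2$, which integrates to a constant under the $\log$-moment hypothesis \eqref{EQ:03}.

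The main obstacle is the state-dependent jump term $\sum_{i=1}^{m} h_i\int_D(V_2(h+\xi)-V_2(h)-\langle\xi,\nabla V_2(h)\rangle)\mu_i(d\xi)$: the prefactor $h_i$ can be arbitrarily large, while admissibility only provides $\int_{|\xi|>1}|\xi|\mu_i(d\xi)<\infty$ with no second moment on the big jumps. I would split the inner integral into the three regions $|\xi|\leq 1$, $1<|\xi|\leq 1+|h|$, and $|\xi|>1+|h|$, and find in each an appropriate pointwise upper bound on the integrand. On $|\xi|\leq 1$, Taylor's theorem gives $|V_2(h+\xi)-V_2(h)-\langle\xi,\nabla V_2(h)\rangle|\leq C|\xi|^2/(1+|h|^2)$, and using $h_i/(1+|h|^2)\leq 1/2$ and integrating against $\mu_i$ (via $\int_{|\xi|\leq 1}|\xi|^2\mu_i<\infty$) yields a constant contribution.

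On $1<|\xi|\leq 1+|h|$ I would use the concavity inequality $\log(1+u)\leq u$ applied to $u=(|h+\xi|^2-|h|^2)/(1+|h|^2)$ to obtain $V_2(h+\xi)-V_2(h)-\langle\xi,\nabla V_2(h)\rangle\leq |\xi|^2/(1+|h|^2)$. Replacing $|\xi|^2$ by $(1+|h|)|\xi|$ on this range and integrating against $\mu_i$ produces at most $Ch_i(1+|h|)(1+|h|^2)^{-1}\int_{|\xi|>1}|\xi|\mu_i(d\xi)\leq C$, bounded uniformly in $h$.

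The critical region is $|\xi|>1+|h|$, where the concavity bound $|\xi|^2/(1+|h|^2)$ is no longer integrable against $\mu_i$. Here I would use instead $V_2(h+\xi)-V_2(h)\leq V_2(\xi)+\log 2$ together with $|\langle\xi,\nabla V_2(h)\rangle|\leq 2|h||\xi|/(1+|h|^2)$. The auxiliary ingredient is the tail estimate $\int_{|\xi|>R}\log|\xi|\mu_i(d\xi)\leq C(1+\log R)/R$, valid for all $R\geq 1$, which I plan to derive from $\int_{|\xi|>1}|\xi|\mu_i<\infty$ by Markov's inequality $\mu_i(|\xi|>R)\leq C/R$ and a Stieltjes integration by parts. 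Applied with $R=1+|h|$ this bounds $\int_{|\xi|>1+|h|}V_2(\xi)\mu_i(d\xi)$ by $C(1+\log(1+|h|))/(1+|h|)$, while the compensator piece contributes $|h|(1+|h|^2)^{-1}\int_{|\xi|>1+|h|}|\xi|\mu_i\leq C/(1+|h|)$. Multiplying both by $h_i\leq 1+|h|$ converts these into a bound of order $C(1+\log(1+|h|))\leq C(1+V_2(h))$, which closes the argument.
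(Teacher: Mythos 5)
Your proposal is correct, but for the crucial state-dependent jump term it takes a genuinely different route from the paper. The paper handles $\sum_i x_i\int_{|\xi|>1}\bigl(V_2(x+\xi)-V_2(x)-\langle\xi,\nabla V_2(x)\rangle\bigr)\mu_i(d\xi)$ with a one-line monotonicity trick: writing $V_2(x+\xi)-V_2(x)=2\int_0^1\frac{\langle\xi,x+t\xi\rangle}{1+|x+t\xi|^2}\,dt$ and using that $\mu_i$ is supported on $D$ (so $\xi_i\ge 0$), one has $x_i\le x_i+t\xi_i\le |x+t\xi|$, whence $x_i\bigl(V_2(x+\xi)-V_2(x)\bigr)\le 2|\xi|$ uniformly in $x$; together with $x_i|\langle\xi,\nabla V_2(x)\rangle|\le 2|\xi|$ this reduces everything to $\int_{|\xi|>1}|\xi|\,\mu_i(d\xi)<\infty$, with no splitting of jump sizes relative to $|x|$ and no tail estimate. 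You instead partition into $|\xi|\le 1$, $1<|\xi|\le 1+|h|$ and $|\xi|>1+|h|$, use the exact identity $V_2(h+\xi)-V_2(h)-\langle\xi,\nabla V_2(h)\rangle\le |\xi|^2/(1+|h|^2)$ in the middle range, and in the far range invoke the tail bound $\int_{|\xi|>R}\log|\xi|\,\mu_i(d\xi)\le C(1+\log R)/R$ obtained from Markov's inequality and integration by parts; all of these steps check out (the far-range contribution is of order $1+\log(1+|h|)\le C(1+V_2(h))$, which stays within the claimed bound). What your longer argument buys is robustness: it never uses the nonnegativity of the $I$-components of the jumps or of $x_i$, so it would survive for signed state-dependent jumps with a first moment, whereas the paper's shorter proof hinges on the cone structure of $D$. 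The remaining terms (drift, diffusion, and the $\nu$-integral via Taylor on $|\xi|\le 1$ and submultiplicativity of $1+|\cdot|^2$ on $|\xi|>1$) are handled essentially as in the paper.
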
 \begin{proof} Observe that $\nabla V_{2}(x)=\frac{2x}{1+|x|^{2}}$.
Hence we obtain for some generic constant $C>0$
\[
\left\langle \widetilde{b}+\beta x,\nabla V_{2}(x)\right\rangle \leq C\left(1+|x|\right)|\nabla V_{2}(x)|\leq C\frac{(1+|x|)|x|}{1+|x|^{2}}\leq C.
\]
Observe that, for $k,l\in\{1,\dots,d\}$,
\[
\frac{\partial^{2}V_{2}(x)}{\partial x_{k}\partial x_{l}} = \frac{2\delta_{kl}}{1+|x|^{2}}-\frac{4x_{k}x_{l}}{(1+|x|^{2})^{2}}.
\]
Using $x_{k}x_{l}\leq C(1+|x|^{2})$ gives $\left|\frac{\partial^{2}V_{2}(x)}{\partial x_{k}\partial x_{l}}\right|\leq\frac{C}{1+|x|^{2}}$.
This implies that
\begin{align*}
\sum\limits _{k,l=1}^{d}\left(a_{kl}+\sum\limits _{i=1}^{m}\alpha_{i,kl}x_{i}\right)\frac{\partial^{2}V_{2}(x)}{\partial x_{k}\partial x_{l}}\leq C\frac{1+|x|}{1+|x|^{2}}\leq C.
\end{align*}
Let us estimate the integrals against $\nu$ and $\mu_{1},\dots,\mu_{m}$.
Consider first the case $|\xi|>1$. Then
\[
V_{2}(x+\xi)-V_{2}(x)\leq V_{2}(x+\xi)\leq C\log(1+|x|^{2}+|\xi|^{2})\leq C\log(1+|x|^{2})+C\log(1+|\xi|^{2}),
\]
and hence we obtain
\begin{align*}
\int\limits _{|\xi|>1}\left(V_{2}(x+\xi)-V_{2}(x)\right)\nu(d\xi)\leq C\int\limits _{|\xi|>1}\left(V_{2}(x)+V_{2}(\xi)\right)\nu(d\xi)\leq C(1+V_{2}(x)).
\end{align*}
From the mean value theorem we obtain
\begin{align*}
V_{2}(x+\xi)-V_{2}(x)=\int\limits _{0}^{1}\left\langle \xi,\nabla V_{2}(x+t\xi)\right\rangle dt=2\int\limits _{0}^{1}\frac{\left\langle \xi,x+t\xi\right\rangle }{1+|x+t\xi|^{2}}dt\leq2|\xi|\int\limits _{0}^{1}\frac{|x+t\xi|}{1+|x+t\xi|^{2}}dt.
\end{align*}
In view of $x_{i}\leq x_{i}+t\xi_{i}\leq|x_{I}+t\xi_{I}|\leq|x+t\xi|$
for $i\in I$, we obtain $x_{i}(V_{2}(x+\xi)-V_{2}(x))\leq2|\xi|$.
Using $\left\langle \xi,\nabla V_{2}(x)\right\rangle \leq|\xi||\nabla V_{2}(x)|\leq C|\xi|$
gives
\begin{align*}
\sum\limits _{i=1}^{m}x_{i}\int\limits _{|\xi|>1}\left(V_{2}(x+\xi)-V_{2}(x)-\left\langle \xi,\nabla V_{2}(x)\right\rangle \right)\mu_{i}(d\xi)\leq C\sum\limits _{i=1}^{m}\int\limits _{|\xi|>1}|\xi|\mu_{i}(d\xi).
\end{align*}
It remains to estimate the corresponding integrals for $|\xi|\leq1$.
As in \eqref{eq: double mean value theorem}, we get
\begin{align*}
V_{2}(x+\xi)-V_{2}(x)-\left\langle \xi,\nabla V_{2}(x)\right\rangle  & \leq C|\xi|^{2}\int\limits _{0}^{1}\int\limits _{0}^{t}\frac{1}{1+|x+s\xi|^{2}}dsdt.
\end{align*}
This implies
\[
\int\limits _{|\xi|\leq1}\left(V_{2}(x+\xi)-V_{2}(x)-\left\langle \xi,\nabla V_{2}(x)\right\rangle \right)\nu(d\xi)\leq C\int\limits _{|\xi|\leq1}|\xi|^{2}\nu(d\xi).
\]
For $i\in I$, by $x_{i}\leq|x+s\xi|$, we get $\frac{x_{i}}{1+|x+s\xi|^{2}}\leq1$
and hence
\[
\sum\limits _{i=1}^{m}x_{i}\int\limits _{|\xi|\leq1}\left(V_{2}(x+\xi)-V_{2}(x)-\left\langle \xi,\nabla V_{2}(x)\right\rangle \right)\mu_{i}(d\xi)\leq C\sum\limits _{i=1}^{m}\int\limits _{|\xi|\leq1}|\xi|^{2}\mu_{i}(d\xi).
\]
Collecting all estimates proves the desired estimate for $\mathcal{A}_{2}$.
\end{proof}

\subsection{Some estimate on the Wasserstein distance}

Here and below we let $d\in\{d_{\kappa},d_{\log}\}$. Below we provide
two simple and known estimates for Wasserstein distances. \begin{Lemma}\label{WASSERSTEIN}
Let $f,\widetilde{f},g\in\mathcal{P}_{d}(D)$. Then
\[
W_{d}(f\ast g,\widetilde{f}\ast g)\leq W_{d}(f,\widetilde{f}).
\]
\end{Lemma}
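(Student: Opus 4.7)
The plan is to exploit the translation invariance of both metrics $d_\kappa$ and $d_{\log}$, and then build a coupling of $(f \ast g, \widetilde{f} \ast g)$ from a coupling of $(f, \widetilde{f})$ in the obvious way, by adding an independent $g$-distributed summand to both coordinates.

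First I would check that for every $z = (u,v) \in \R_+^m \times \R^n$ and every $x = (y,\zeta)$, $\widetilde{x} = (\widetilde{y}, \widetilde{\zeta})$ in $D$, one has the translation invariance
\[
 d(x+z, \widetilde{x}+z) \;=\; d(x, \widetilde{x}),
\]
for $d \in \{d_\kappa, d_{\log}\}$. This is immediate, because $(x+z) - (\widetilde{x}+z) = x - \widetilde{x}$ and $(y+u) - (\widetilde{y}+u) = y - \widetilde{y}$, so the defining expressions for both $d_\kappa$ and $d_{\log}$ are unchanged.

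Next, for an arbitrary coupling $H \in \mathcal{H}(f, \widetilde{f})$ I would consider the Borel probability measure $\widehat{H}$ on $D \times D$ defined as the push-forward of $H \otimes g$ under the map $(x, \widetilde{x}, z) \longmapsto (x+z, \widetilde{x}+z)$. A direct check on test functions of the form $\varphi(x) + \psi(\widetilde{x})$ shows that $\widehat{H}$ has marginals $f \ast g$ and $\widetilde{f} \ast g$, hence $\widehat{H} \in \mathcal{H}(f \ast g, \widetilde{f} \ast g)$. Using translation invariance,
\[
 \int\limits_{D \times D} d(x', \widetilde{x}')\,\widehat{H}(dx',d\widetilde{x}') \;=\; \int\limits_D \int\limits_{D \times D} d(x+z, \widetilde{x}+z)\,H(dx,d\widetilde{x})\,g(dz) \;=\; \int\limits_{D \times D} d(x,\widetilde{x})\,H(dx,d\widetilde{x}).
\]

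Taking the infimum over $H \in \mathcal{H}(f, \widetilde{f})$ on the right and noting $W_d(f \ast g, \widetilde{f} \ast g) \leq \int d\,\widehat{H}$ by definition yields the claim. The assumption $f,\widetilde{f},g \in \mathcal{P}_d(D)$ ensures that $f \ast g, \widetilde{f} \ast g \in \mathcal{P}_d(D)$ so that the left-hand Wasserstein distance is well-defined; no real obstacle arises here, as the entire argument is a one-line change-of-variables once translation invariance of $d$ is observed.
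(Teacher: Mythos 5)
Your proof is correct, but it takes the primal (coupling) route rather than the dual route used in the paper. The paper invokes the Kantorovich duality (\cite[Theorem 5.10]{V09}): it writes $W_{d}(f\ast g,\widetilde{f}\ast g)$ as a supremum over $d$-Lipschitz test functions $h$ with $\|h\|\leq 1$, pushes the convolution onto the test function by setting $\widetilde{h}(x)=\int_{D}h(x+x')g(dx')$, and observes that $\|\widetilde{h}\|\leq1$ (which, just as in your argument, silently uses the translation invariance $d(x+z,\widetilde{x}+z)=d(x,\widetilde{x})$ on $D$), so that the supremum is dominated by $W_{d}(f,\widetilde{f})$. You instead stay entirely on the primal side: from an arbitrary $H\in\mathcal{H}(f,\widetilde{f})$ you build the push-forward of $H\otimes g$ under $(x,\widetilde{x},z)\mapsto(x+z,\widetilde{x}+z)$, verify it is a coupling of $(f\ast g,\widetilde{f}\ast g)$, and use translation invariance plus the infimum over $H$. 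What your approach buys is self-containedness: it needs no duality theorem (hence no discussion of whether $d_{\kappa}$, $d_{\log}$ and the cost fit the hypotheses of \cite[Theorem 5.10, Case 5.16]{V09}), only measurability of the shift map and the elementary moment bound ensuring $f\ast g,\widetilde{f}\ast g\in\mathcal{P}_{d}(D)$ (subadditivity of $a\mapsto a^{\kappa}$, respectively $\log(1+a+b)\leq\log(1+a)+\log(1+b)$), and it never requires an optimal coupling to exist since you pass to the infimum at the end. The paper's dual argument, on the other hand, is shorter once the duality is granted and avoids constructing any explicit coupling. Both hinge on the same structural fact, the translation invariance of $d$ under shifts by elements of $D$, which you state and verify explicitly, so there is no gap.
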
 \begin{proof} Using the Kantorovich duality (see \citep[Theorem 5.10, Case 5.16]{V09},
we obtain
\begin{align*}
W_{d}(f\ast g,\widetilde{f}\ast g)=\sup\limits _{\|h\|\leq1}\left(\int\limits _{D}h(x)(f\ast g)(dx)-\int\limits _{D}h(x)(\widetilde{f}\ast g)(dx)\right),
\end{align*}
where $\|h\|=\sup_{x\neq x'}\frac{|h(x)-h(x')|}{d(x,x')}$. Using
now the definition of the convolution on the right-hand side gives
\begin{align*}
 & \ \int\limits _{D}h(x)(f\ast g)(dx)-\int\limits _{D}h(x)(\widetilde{f}\ast g)(dx)\\
 & =\int\limits _{D}\int\limits _{D}h(x+x')f(dx)g(dx')-\int\limits _{D}\int\limits _{D}h(x+x')\widetilde{f}(dx)g(dx')\\
 & =\int\limits _{D}\widetilde{h}(x)f(dx)-\int\limits _{D}\widetilde{h}(x)\widetilde{f}(dx),
\end{align*}
where $\widetilde{h}(x)=\int_{D}h(x+x')g(dx')$. Since $\|\widetilde{h}\|\leq1$,
we conclude that
\begin{align*}
W_{d}(f\ast g,\widetilde{f}\ast g) & =\sup\limits _{\|h\|\leq1}\left(\int\limits _{D}\widetilde{h}(x)f(dx)-\int\limits _{D}\widetilde{h}(x)\widetilde{f}(dx)\right)\\
 & \leq\sup\limits _{\|h\|\leq1}\left(\int\limits _{D}h(x)f(dx)-\int\limits _{D}h(x)\widetilde{f}(dx)\right)=W_{d}(f,\widetilde{f}),
\end{align*}
where we have used again the Kantorovich duality. This completes the
proof. \end{proof} The next estimate shows that the Wasserstein distance
is convex. For additional details we refer to \citep[Theorem 4.8]{V09}.
\begin{Lemma}\label{WASSERSTEIN:1} Let $P(x,\cdot)$ be a Markov
transition function on $D\times\mathcal{P}_{d}(D)$. Then, for any
$f,g\in\mathcal{P}_{d}(D)$ and any coupling $H$ of $(f,g)$, it
holds that
\[
W_{d}\left(\int\limits _{D}P(x,\cdot)f(dx),\int\limits _{D}P(x,\cdot)g(dx)\right)\leq\int\limits _{D\times D}W_{d}(P(x,\cdot),P(\widetilde{x},\cdot))H(dx,d\widetilde{x}).
\]
\end{Lemma}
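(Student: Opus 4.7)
The plan is to construct an explicit coupling of the two mixed measures $\int_D P(x,\cdot)f(dx)$ and $\int_D P(x,\cdot)g(dx)$ by gluing together optimal couplings of $(P(x,\cdot),P(\widetilde{x},\cdot))$ across $(x,\widetilde{x})$ distributed according to $H$, and then to bound its cost directly.

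First, for each pair $(x,\widetilde{x})\in D\times D$, I would invoke the existence of an optimal coupling $Q_{x,\widetilde{x}}\in\mathcal{H}(P(x,\cdot),P(\widetilde{x},\cdot))$ for $W_d$, so that
\[
\int\limits_{D\times D} d(y,\widetilde{y})\,Q_{x,\widetilde{x}}(dy,d\widetilde{y})=W_d(P(x,\cdot),P(\widetilde{x},\cdot)).
\]
Existence of such an optimal coupling is standard (e.g. Villani, Theorem 4.1) once $d$ is lower semicontinuous and the marginals have finite $d$-moment, which is the case for $d\in\{d_\kappa,d_{\log}\}$. The subtle point is that $(x,\widetilde{x})\mapsto Q_{x,\widetilde{x}}$ must be chosen in a measurable way; I would appeal to a measurable selection theorem (for instance Corollary 5.22 in Villani, or the Kuratowski–Ryll-Nardzewski theorem applied to the set-valued map of optimal couplings) to make this selection jointly measurable in $(x,\widetilde{x})$.

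Next I would define the candidate coupling on $D\times D$ by
\[
\widetilde{H}(A\times B)=\int\limits_{D\times D} Q_{x,\widetilde{x}}(A\times B)\,H(dx,d\widetilde{x}),\qquad A,B\subset D\text{ Borel}.
\]
I then check that $\widetilde{H}\in\mathcal{H}\bigl(\int P(x,\cdot)f(dx),\int P(x,\cdot)g(dx)\bigr)$: for the first marginal, using that $Q_{x,\widetilde{x}}$ has first marginal $P(x,\cdot)$ and that $H$ has first marginal $f$,
\[
\widetilde{H}(A\times D)=\int Q_{x,\widetilde{x}}(A\times D)H(dx,d\widetilde{x})=\int P(x,A)f(dx),
\]
and symmetrically for the second marginal.

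Finally, using $\widetilde{H}$ as a competitor in the definition \eqref{WASSERSTEIN:2} of the Wasserstein distance and applying Fubini,
\[
W_d\!\left(\int P(x,\cdot)f(dx),\int P(x,\cdot)g(dx)\right)\leq\int d(y,\widetilde{y})\,\widetilde{H}(dy,d\widetilde{y})=\int W_d(P(x,\cdot),P(\widetilde{x},\cdot))\,H(dx,d\widetilde{x}),
\]
which is the claim. The main obstacle is really the measurable selection of the optimal couplings $Q_{x,\widetilde{x}}$; everything else is bookkeeping with Fubini and the definition of marginals.
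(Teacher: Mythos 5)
Your proof is correct: the mixture coupling $\widetilde{H}$ glued from measurably selected optimal plans $Q_{x,\widetilde{x}}$, together with the marginal check and Fubini, is exactly the standard argument, and you rightly flag the measurable selection (Villani, Corollary 5.22) as the only delicate step. The paper does not prove this lemma itself but simply refers to Villani, Theorem 4.8, whose proof is precisely the argument you have written out, so your route coincides with the one the paper relies on.
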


\subsection{Proof of the elementary inequality with respect to $\log$}
Below we prove the following inequality.
\begin{Lemma}
 For any $a,b \geq 0$ one has
 \[
  \log(1 + ab) \leq \log(2e-1) \min\{\log(1+a), \log(1+b)\} + \log(2e-1) \log(1+a)\log(1+b).
 \]
\end{Lemma}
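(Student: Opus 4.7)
The plan is to reduce to a one-sided inequality by symmetry, derive two baseline estimates for $\log(1+ab)$, and carry out a case analysis at the natural threshold $a=e-1$ that produces the constant $C=\log(2e-1)$.

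First, both sides of the claim are symmetric under the interchange of $a$ and $b$, so I may assume without loss of generality that $a\leq b$. Then $\min\{\log(1+a),\log(1+b)\}=\log(1+a)$ and the inequality reduces to
\[
\log(1+ab)\leq C\log(1+a)\bigl(1+\log(1+b)\bigr),\qquad C:=\log(2e-1).
\]
Two baseline estimates are at my disposal. The factorization $1+ab\leq(1+a)(1+b)$ gives $\log(1+ab)\leq\log(1+a)+\log(1+b)$, while Bernoulli's inequality $(1+b)^{1+a}\geq 1+(1+a)b\geq 1+ab$ yields the sharper bound $\log(1+ab)\leq(1+a)\log(1+b)$.

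Second, I would split at the threshold $a=e-1$, i.e.\ $\log(1+a)=1$. In the regime $a\geq e-1$, the first baseline already suffices: since $\log(1+a)\geq 1$,
\[
\log(1+ab)\leq\log(1+a)+\log(1+b)\leq\log(1+a)\bigl(1+\log(1+b)\bigr),
\]
which proves the inequality with the smaller constant $1\leq\log(2e-1)$. In the complementary regime $a\in[0,e-1]$, I would invoke the convexity estimate $e^{s}-1\leq(e-1)s$ valid for $s\in[0,1]$ (obtained by comparing the convex function $e^{s}-1$ with its secant on that interval), applied at $s=\log(1+a)\in[0,1]$. This yields $a\leq(e-1)\log(1+a)$, and combined with the Bernoulli bound gives
\[
\log(1+ab)\leq(1+a)\log(1+b)\leq\bigl(1+(e-1)\log(1+a)\bigr)\log(1+b).
\]

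The main obstacle is the final step: absorbing this expression into $C\log(1+a)\bigl(1+\log(1+b)\bigr)$ with the precise constant $C=\log(2e-1)$. I expect this to require a careful interpolation between the two baseline estimates at the extremal configuration where $\log(1+a)$ approaches $1$ from below while $\log(1+b)$ is allowed to grow; the value $\log(2e-1)$ should arise as the optimizer of a one-parameter elementary extremal problem balancing the two regimes of the case analysis.
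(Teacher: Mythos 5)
Your partial steps are sound: the symmetry reduction, the two baselines, and the case $\min\{a,b\}\geq e-1$ (where the constant $1$ indeed suffices) are all correct. But the step you single out as the main obstacle is not merely delicate — it cannot be carried out, because the inequality you are trying to prove is false in exactly that regime. Fix any $a$ with $\log(2e-1)\log(1+a)<1$, i.e. $a<e^{1/\log(2e-1)}-1\approx 0.956$, and let $b\to\infty$. Then the left-hand side is $\log(1+ab)=\log b+\log a+o(1)$, while the right-hand side equals $\log(2e-1)\log(1+a)\log(1+b)+O(1)$, whose leading coefficient is strictly less than $1$; hence the left-hand side eventually exceeds the right-hand side. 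A concrete instance is $a=\tfrac12$, $b=e^{10}$: then $\log(1+ab)\approx 9.31$, whereas $\log(2e-1)\bigl(\log\tfrac32+\log\tfrac32\cdot\log(1+e^{10})\bigr)\approx 6.64$. So no interpolation between your two baselines (nor any other argument) can close the case $a\in[0,e-1]$ with a constant that is uniform in $b$; the obstruction you ran into is genuine, and your suspicion that all the difficulty sits in that last absorption step was accurate.

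For comparison, the paper's own proof attempts to bridge precisely this gap via the submultiplicativity $\log(e+ab)\leq\log(e+a)\log(e+b)$, from which it claims $\log(1+ab)\leq\log(e+a)\log(1+b)$; the subsequent case analysis at the threshold $\varepsilon=e-1$ (which is what produces the constant $\log(2e-1)$ you were aiming for) would be fine if that claim held. However, the displayed manipulation $\log(e^{-1})+\log(e+eab)\leq\log(e+a)\bigl(\log(e^{-1})+\log(e+eb)\bigr)$ is only valid when $\log(e+a)\leq1$, i.e. $a=0$, and the resulting intermediate bound is false in general (take $a=100$, $b=0.01$: $\log 2\approx 0.69$ versus $\log(e+100)\log(1.01)\approx 0.05$). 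So your proposal and the paper's proof break at the same point, and as stated the lemma fails whenever one argument is small and the other large — which is, incidentally, the regime ($a=Ke^{-\delta t}$ small, $b$ an unbounded distance) in which it is invoked in the proof of Proposition \ref{PROP:04}. What is true in general is only the additive bound $\log(1+ab)\leq\log(1+a)+\log(1+b)$, or, for $a\leq1$, $\log(1+ab)\leq\min\{ab,\log(1+b)\}$; neither has the multiplicative structure the stated lemma asserts.
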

\begin{proof}
 Using the elementary inequality
 $\log(e + ab) \leq \log(e+a)\log(e+b)$, see \cite{MR1041496}, we easily obtain
 \begin{align*}
  \log(1 + ab) &= \log(e^{-1}) + \log(e + eab)
  \\ &\leq \log(e+a)\left( \log(e^{-1}) + \log(e + eb)\right)
  \leq \log(e+a)\log(1+b)
 \end{align*}
 from which we readily deduce
 \[
  \log(1+ab) \leq \min\{ \log(e+a)\log(1+b), \log(e+b)\log(1+a)\}.
 \]
 Fix any $\e > 0$. If $a \geq \e$, then we obtain
 \[
  \log(1 + ab) \leq \log(e+a)\log(1+b)
  \leq \frac{\log(e + \e)}{\log(1 + \e)} \log(1 + a)\log(1+b).
 \]
 The case $b \geq \e$ can be treated in the same way.
 Finally, if $0 \leq a,b \leq \e$, then we obtain
  \begin{align*}
  \log(1 + ab) &\leq \min\{ \log(e+a)\log(1+b), \log(e+b)\log(1+a)\}
  \\ &\leq \log(e + \e)\min \left\{ \log(e + \e), \frac{\log(e+\e)}{\log(1+\e)} \right\}.
 \end{align*}
 Collecting both estimates gives, for all $a,b \geq 0$, the estimate
 \[
  \log(1+ab) \leq g(\e)\min\{\log(1+a), \log(1+b)\} + g(\e)\log(1+a)\log(1+b),
 \]
 where $g(\e) = \min \left\{ \log(e + \e), \frac{\log(e+\e)}{\log(1+\e)} \right\}$.
 A simple extreme value analysis shows that $g$ attains its maximum at $\e = e-1$ which gives
 $\inf \limits_{\e > 0}g(\e) = g(e - 1) = \log(2e-1)$.
\end{proof}

\subsection*{Acknowledgments}

The authors would like to thank Jonas Kremer for several discussions
on affine processes and pointing out some interesting references on
this topic. Peng Jin is supported by the STU Scientific
Research Foundation for Talents (No. NTF18023).

\begin{footnotesize}

\end{footnotesize}

\begin{thebibliography}{CPGUB17}

\bibitem[Alf15]{A15}
Aur\'{e}lien Alfonsi.
\newblock {\em Affine diffusions and related processes: simulation, theory and
  applications}, volume~6 of {\em Bocconi \& Springer Series}.
\newblock Springer, Cham; Bocconi University Press, Milan, 2015.

\bibitem[BBAKP18]{BBKP18}
M\'{a}ty\'{a}s Barczy, Mohamed Ben~Alaya, Ahmed Kebaier, and Gyula Pap.
\newblock Asymptotic properties of maximum likelihood estimator for the growth
  rate for a jump-type {CIR} process based on continuous time observations.
\newblock {\em Stochastic Process. Appl.}, 128(4):1135--1164, 2018.

\bibitem[BDLP13]{BDLP13}
M\'{a}ty\'{a}s Barczy, Leif D\"{o}ring, Zenghu Li, and Gyula Pap.
\newblock On parameter estimation for critical affine processes.
\newblock {\em Electron. J. Stat.}, 7:647--696, 2013.

\bibitem[BDLP14]{BDLP14}
M\'{a}ty\'{a}s Barczy, Leif D\"{o}ring, Zenghu Li, and Gyula Pap.
\newblock Stationarity and ergodicity for an affine two-factor model.
\newblock {\em Adv. in Appl. Probab.}, 46(3):878--898, 2014.

\bibitem[BLP15a]{BLP15}
M\'aty\'as Barczy, Zenghu Li, and Gyula Pap.
\newblock Stochastic differential equation with jumps for multi-type continuous
  state and continuous time branching processes with immigration.
\newblock {\em ALEA Lat. Am. J. Probab. Math. Stat.}, 12(1):129--169, 2015.

\bibitem[BLP15b]{BLP15b}
M\'{a}ty\'{a}s Barczy, Zenghu Li, and Gyula Pap.
\newblock Yamada-{W}atanabe results for stochastic differential equations with
  jumps.
\newblock {\em Int. J. Stoch. Anal.}, pages Art. ID 460472, 23, 2015.

\bibitem[BLP16]{BLP16}
M\'{a}ty\'{a}s Barczy, Zenghu Li, and Gyula Pap.
\newblock Moment formulas for multitype continuous state and continuous time
  branching process with immigration.
\newblock {\em J. Theoret. Probab.}, 29(3):958--995, 2016.

\bibitem[BPP18a]{BPP18b}
M\'{a}ty\'{a}s Barczy, Sandra Palau, and Gyula Pap.
\newblock Almost sure, {$L_1$}- and {$L_2$}-growth behavior of supercritical
  multi-type continuous state and continuous time branching processes with
  immigration.
\newblock {\em arXiv:1803.10176 [math.PR]}, 2018.

\bibitem[BPP18b]{BPP18}
M\'{a}ty\'{a}s Barczy, Sandra Palau, and Gyula Pap.
\newblock Asymptotic behavior of projections of supercritical multi-type
  continuous state and continuous time branching processes with immigration.
\newblock {\em arXiv:1806.10559 [math.PR]}, 2018.

\bibitem[CLP18]{CLP18}
Marie Chazal, Ronnie Loeffen, and Pierre Patie.
\newblock Smoothness of continuous state branching with immigration semigroups.
\newblock {\em J. Math. Anal. Appl.}, 459(2):619--660, 2018.

\bibitem[CPGUB13]{CPU13}
M.~Emilia Caballero, Jos\'{e}~Luis P\'{e}rez~Garmendia, and Ger\'{o}nimo
  Uribe~Bravo.
\newblock A {L}amperti-type representation of continuous-state branching
  processes with immigration.
\newblock {\em Ann. Probab.}, 41(3A):1585--1627, 2013.

\bibitem[CPGUB17]{CPU17}
M.~Emilia Caballero, Jos\'{e}~Luis P\'{e}rez~Garmendia, and Ger\'{o}nimo
  Uribe~Bravo.
\newblock Affine processes on {$\Bbb R_+^m\times\Bbb R^n$} and multiparameter
  time changes.
\newblock {\em Ann. Inst. Henri Poincar\'{e} Probab. Stat.}, 53(3):1280--1304,
  2017.

\bibitem[DFM14]{DFM14}
Xan Duhalde, Cl\'{e}ment Foucart, and Chunhua Ma.
\newblock On the hitting times of continuous-state branching processes with
  immigration.
\newblock {\em Stochastic Process. Appl.}, 124(12):4182--4201, 2014.

\bibitem[DFS03]{DFS03}
Darrell Duffie, Damir Filipovi\'c, and Walter Schachermayer.
\newblock Affine processes and applications in finance.
\newblock {\em Ann. Appl. Probab.}, 13(3):984--1053, 2003.

\bibitem[DL06]{DL06}
D.~A. Dawson and Zenghu Li.
\newblock Skew convolution semigroups and affine {M}arkov processes.
\newblock {\em Ann. Probab.}, 34(3):1103--1142, 2006.

\bibitem[EK86]{EK86}
Stewart~N. Ethier and Thomas~G. Kurtz.
\newblock {\em Markov processes}.
\newblock Wiley Series in Probability and Mathematical Statistics: Probability
  and Mathematical Statistics. John Wiley \& Sons, Inc., New York, 1986.
\newblock Characterization and convergence.

\bibitem[FFS85]{FFS85}
P.~J. Fitzsimmons, Bert Fristedt, and L.~A. Shepp.
\newblock The set of real numbers left uncovered by random covering intervals.
\newblock {\em Z. Wahrsch. Verw. Gebiete}, 70(2):175--189, 1985.

\bibitem[FJKR19]{2019arXiv190202833F}
Martin {Friesen}, Peng {Jin}, Jonas {Kremer}, and Barbara {R{\"u}diger}.
\newblock {Exponential ergodicity for stochastic equations of nonnegative
  processes with jumps}.
\newblock {\em arXiv e-prints}, page arXiv:1902.02833, February 2019.

\bibitem[FJR18]{FJR18}
Martin Friesen, Peng Jin, and Barbara R\"udiger.
\newblock Existence of densities for multi-type cbi processes.
\newblock {\em arXiv:1810.00400 [math.PR]}, 2018.

\bibitem[FM09]{FM09}
Damir Filipovi\'{c} and Eberhard Mayerhofer.
\newblock Affine diffusion processes: theory and applications.
\newblock In {\em Advanced financial modelling}, volume~8 of {\em Radon Ser.
  Comput. Appl. Math.}, pages 125--164. Walter de Gruyter, Berlin, 2009.

\bibitem[FMS13]{FMS13}
Damir Filipovi\'c, Eberhard Mayerhofer, and Paul Schneider.
\newblock Density approximations for multivariate affine jump-diffusion
  processes.
\newblock {\em J. Econometrics}, 176(2):93--111, 2013.

\bibitem[FUB14]{FU14}
Cl\'{e}ment Foucart and Ger\'{o}nimo Uribe~Bravo.
\newblock Local extinction in continuous-state branching processes with
  immigration.
\newblock {\em Bernoulli}, 20(4):1819--1844, 2014.

\bibitem[Gal11]{G11}
Jean Gallier.
\newblock {\em Geometric Methods and Applications. For Computer Science and
  Engineering}.
\newblock Springer Science+Business Media, Belgium, Europe, 2011.
\newblock Sprachen: English ; Collection: ftunistlouisbrus ; Document Type:
  other/unknown material ; Language: English ; Quelldatenbank: EDS.

\bibitem[GMP89]{MR1041496}
Jan Gustavsson, Lech Maligranda, and Jaak Peetre.
\newblock A submultiplicative function.
\newblock {\em Nederl. Akad. Wetensch. Indag. Math.}, 51(4):435--442, 1989.

\bibitem[Gre74]{G74}
D.~R. Grey.
\newblock Asymptotic behaviour of continuous time, continuous state-space
  branching processes.
\newblock {\em J. Appl. Probability}, 11:669--677, 1974.

\bibitem[Ji58]{J58}
Miloslav Ji\v{r}ina.
\newblock Stochastic branching processes with continuous state space.
\newblock {\em Czechoslovak Math. J.}, 8 (83):292--313, 1958.

\bibitem[JKR17a]{JKR17}
Peng Jin, Jonas Kremer, and Barbara R\"udiger.
\newblock Exponential ergodicity of an affine two-factor model based on the
  {$\alpha$}-root process.
\newblock {\em Adv. in Appl. Probab.}, 49(4):1144--1169, 2017.

\bibitem[JKR17b]{JKR17b}
Peng Jin, Jonas Kremer, and Barbara R\"{u}diger.
\newblock Moments and ergodicity of the jump-diffusion {CIR} process.
\newblock {\em arXiv:1709.00969v2 [math.PR]}, 2017.

\bibitem[JKR18]{JKR18}
Peng {Jin}, Jonas {Kremer}, and Barbara {R{\"u}diger}.
\newblock {Existence of limiting distribution for affine processes}.
\newblock {\em arXiv e-prints}, page arXiv:1812.05402, Dec 2018.

\bibitem[JKX12]{JKX12}
Rudra~P. Jena, Kyoung-Kuk Kim, and Hao Xing.
\newblock Long-term and blow-up behaviors of exponential moments in
  multi-dimensional affine diffusions.
\newblock {\em Stochastic Process. Appl.}, 122(8):2961--2993, 2012.

\bibitem[JS03]{JS03}
Jean Jacod and Albert~N. Shiryaev.
\newblock {\em Limit theorems for stochastic processes}, volume 288 of {\em
  Grundlehren der Mathematischen Wissenschaften [Fundamental Principles of
  Mathematical Sciences]}.
\newblock Springer-Verlag, Berlin, second edition, 2003.

\bibitem[KK14]{KK14}
Wanmo Kang and Chulmin Kang.
\newblock Large deviations for affine diffusion processes on
  {$\Bbb{R}_+^m\times\Bbb{R}^n$}.
\newblock {\em Stochastic Process. Appl.}, 124(6):2188--2227, 2014.

\bibitem[KPR17]{KPR17}
Andreas Kyprianou, Sandra Palau, and Yan-Xia Ren.
\newblock Almost sure growth of supercritical multi-type continuous state
  branching process.
\newblock {\em arXiv:1707.04955 [math.PR]}, 2017.

\bibitem[KRM12]{KM12}
Martin Keller-Ressel and Aleksandar Mijatovi\'{c}.
\newblock On the limit distributions of continuous-state branching processes
  with immigration.
\newblock {\em Stochastic Process. Appl.}, 122(6):2329--2345, 2012.

\bibitem[KRM15]{KM15}
Martin Keller-Ressel and Eberhard Mayerhofer.
\newblock Exponential moments of affine processes.
\newblock {\em Ann. Appl. Probab.}, 25(2):714--752, 2015.

\bibitem[KRS08]{KS08}
Martin Keller-Ressel and Thomas Steiner.
\newblock Yield curve shapes and the asymptotic short rate distribution in
  affine one-factor models.
\newblock {\em Finance Stoch.}, 12(2):149--172, 2008.

\bibitem[KRST11]{KST11}
Martin Keller-Ressel, Walter Schachermayer, and Josef Teichmann.
\newblock Affine processes are regular.
\newblock {\em Probab. Theory Related Fields}, 151(3-4):591--611, 2011.

\bibitem[KRST13]{KST13}
Martin Keller-Ressel, Walter Schachermayer, and Josef Teichmann.
\newblock Regularity of affine processes on general state spaces.
\newblock {\em Electron. J. Probab.}, 18:no. 43, 17, 2013.

\bibitem[Kur11]{K11}
Thomas~G. Kurtz.
\newblock Equivalence of stochastic equations and martingale problems.
\newblock In {\em Stochastic analysis 2010}, pages 113--130. Springer,
  Heidelberg, 2011.

\bibitem[KW71]{KW71}
Kiyoshi Kawazu and Shinzo Watanabe.
\newblock Branching processes with immigration and related limit theorems.
\newblock {\em Teor. Verojatnost. i Primenen.}, 16:34--51, 1971.

\bibitem[Li11]{L11}
Zenghu Li.
\newblock {\em Measure-valued branching {M}arkov processes}.
\newblock Probability and its Applications (New York). Springer, Heidelberg,
  2011.

\bibitem[LM15]{LM15}
Zenghu Li and Chunhua Ma.
\newblock Asymptotic properties of estimators in a stable
  {C}ox-{I}ngersoll-{R}oss model.
\newblock {\em Stochastic Process. Appl.}, 125(8):3196--3233, 2015.

\bibitem[Ma13]{M13}
Ru~Gang Ma.
\newblock Stochastic equations for two-type continuous-state branching
  processes with immigration.
\newblock {\em Acta Math. Sin. (Engl. Ser.)}, 29(2):287--294, 2013.

\bibitem[MSV18]{MSV18}
Eberhard {Mayerhofer}, Robert {Stelzer}, and Johanna {Vestweber}.
\newblock {Geometric Ergodicity of Affine Processes on Cones}.
\newblock {\em arXiv e-prints}, page arXiv:1811.10542, Nov 2018.

\bibitem[MT09]{MT09}
Sean Meyn and Richard~L. Tweedie.
\newblock {\em Markov chains and stochastic stability}.
\newblock Cambridge University Press, Cambridge, second edition, 2009.
\newblock With a prologue by Peter W. Glynn.

\bibitem[Pin72]{P72}
Mark~A. Pinsky.
\newblock Limit theorems for continuous state branching processes with
  immigration.
\newblock {\em Bull. Amer. Math. Soc.}, 78:242--244, 1972.

\bibitem[San17]{S17}
Nikola Sandri\'{c}.
\newblock A note on the {B}irkhoff ergodic theorem.
\newblock {\em Results Math.}, 72(1-2):715--730, 2017.

\bibitem[SW73]{SW73}
Tokuzo Shiga and Shinzo Watanabe.
\newblock Bessel diffusions as a one-parameter family of diffusion processes.
\newblock {\em Z. Wahrscheinlichkeitstheorie und Verw. Gebiete}, 27:37--46,
  1973.

\bibitem[SY84]{SY84}
Ken-iti Sato and Makoto Yamazato.
\newblock Operator-self-decomposable distributions as limit distributions of
  processes of {O}rnstein-{U}hlenbeck type.
\newblock {\em Stochastic Process. Appl.}, 17(1):73--100, 1984.

\bibitem[Vil09]{V09}
C\'edric Villani.
\newblock {\em Optimal transport}, volume 338 of {\em Grundlehren der
  Mathematischen Wissenschaften [Fundamental Principles of Mathematical
  Sciences]}.
\newblock Springer-Verlag, Berlin, 2009.
\newblock Old and new.

\bibitem[Wan12]{W12}
Jian Wang.
\newblock On the exponential ergodicity of {L}\'{e}vy-driven
  {O}rnstein-{U}hlenbeck processes.
\newblock {\em J. Appl. Probab.}, 49(4):990--1004, 2012.

\bibitem[Wat69]{W69}
Shinzo Watanabe.
\newblock On two dimensional {M}arkov processes with branching property.
\newblock {\em Trans. Amer. Math. Soc.}, 136:447--466, 1969.

\bibitem[ZG18]{GZ18}
Xiaowei {Zhang} and Peter~W. {Glynn}.
\newblock {Affine Jump-Diffusions: Stochastic Stability and Limit Theorems}.
\newblock {\em arXiv e-prints}, page arXiv:1811.00122, October 2018.

\end{thebibliography}
\end{document}